\newcommand{\A}{\mathcal{A}}
\newcommand{\B}{\mathcal{B}}
\newcommand{\C}{\mathcal{C}}
\newcommand{\D}{\mathcal{D}}
\newcommand{\E}{\mathcal{E}}
\newcommand{\F}{\mathcal{F}}
\newcommand{\G}{\mathcal{G}}
\newcommand{\I}{\mathcal{I}}
\newcommand{\clL}{\mathcal{L}}
\newcommand{\M}{\mathcal{M}}
\newcommand{\R}{\mathcal{R}}
\newcommand{\X}{\mathcal{X}}
\newcommand{\Y}{\mathcal{Y}}
\newcommand{\W}{\mathcal{W}}
\DeclareMathOperator*{\Mod}{\mathsf{Mod}-\!}
\DeclareMathOperator{\Hom}{\mathsf{Hom}}
\DeclareMathOperator{\RHom}{\mathbf{R}Hom}
\DeclareMathOperator{\Ext}{\mathsf{Ext}}
\DeclareMathOperator{\Tor}{\mathsf{Tor}}
\DeclareMathOperator{\Ker}{\mathsf{Ker}}
\DeclareMathOperator{\Coker}{\mathsf{Coker}}
\DeclareMathOperator{\coker}{coker}
\newcommand{\lifts}{\,\square\,}
\newcommand{\rightperp}[1]{#1^{\perp}}
\newcommand{\leftperp}[1]{{}^\perp #1}
\newcommand{\pd}{\mathrm{pd}}
\newcommand{\Proj}[1]{\hbox{\rm Proj}{#1}}
\newcommand{\Inj}[1]{\mathrm{Inj}\,{#1}}
\newcommand{\Rep}{\mathrm{Rep}}
\newcommand{\GProj}{\operatorname{\mathsf{GProj}}\nolimits}
 \newcommand{\GInj}{\operatorname{\mathsf{GInj}}\nolimits}
\newcommand{\Filt}{\mathrm{Filt}}
\newcommand{\class}{\mathcal}
\newcommand{\Ch}{\mathsf{C}}
\newcommand{\Chac}{\mathsf{C}_\mathsf{ac}}
\newcommand{\Ho}{\mathsf{Ho}}
\newcommand{\Der}{\mathsf{D}}
\newtheorem{theorem}{Theorem}[section]
\newtheorem{lemma}[theorem]{Lemma}
\newtheorem{setup}[theorem]{Setup}
\newtheorem{proposition}[theorem]{Proposition}
\newtheorem{corollary}[theorem]{Corollary}
\theoremstyle{definition}
\newtheorem{definition}[theorem]{Definition}
\newtheorem{fact}[theorem]{Fact}
\newtheorem{remark}[theorem]{Remark}
\newtheorem{example}[theorem]{Example}
\newtheorem*{problem}{Problem}
\newtheorem{ipg}[theorem]{}
\begin{document}

\title[Lifting recollements and model structures]{Lifting recollements of abelian categories \\ and model structures}

\subjclass[2010]{18N40, 18G80, 16G50}

\author{Georgios Dalezios}
\author{Chrysostomos Psaroudakis}
\address{Institute of Algebra and Number Theory, University of Stuttgart, Pfaffenwaldring
57, 70569 Stuttgart, Germany}
\email{gdalezios@math.uoa.gr}
\address{Department of Mathematics, Aristotle University of Thessaloniki, Thessaloniki 54124, Greece}
\email{chpsaroud@math.auth.gr}

\begin{abstract}
We use Quillen model structures to show a systematic method to lift recollements of hereditary abelian model categories to recollements of their associated homotopy categories. To that end, we use the notion of Quillen adjoint triples and we investigate transfers of abelian model structures along adjoint pairs. Applications include liftings of recollements of module categories to their derived counterpart, liftings to homotopy categories that provide models for stable categories of Gorenstein projective and injective modules and liftings to homotopy categories of $n$-morphism categories over Iwanaga-Gorenstein rings.
\end{abstract}

\maketitle

\setcounter{tocdepth}{1}
\tableofcontents

\section{Introduction and main results}
\label{sec:intro}
Derived categories were introduced by Grothendieck and Verdier and play a substantial role in several branches of mathematics. One major aspect of derived categories is that several homological invariants can be formulated naturally in this setup. Classifying algebraic or geometric objects up to derived equivalence is certainly another important aspect of derived categories, 
since it provides canonical isomorphisms between the invariants \cite{Ktheoryinvariants}. 
One of the key ingredients of those developments is the triangulated structure of derived categories \cite{Verdier}. On the other hand, Beilinson-Bernstein-Deligne \cite{BBD} introduced recollements of triangulated categories formalizing Grothendieck's six functors for derived categories of sheaves. A recollement of triangulated categories is a short exact sequence of triangulated categories where both the quotient functor as well as the inclusion functor admit a left and a right adjoint. Recollement situations provide a very convenient framework for studying homological invariants. Of particular interest are recollements of derived categories that arise from recollements of abelian categories. 

Recollements of abelian categories had implicitly appeared in \cite{BBD} via the glueing method of t-structures along a recollement of triangulated categories. The associated hearts, which are abelian categories, give rise to a recollement situation. In the context of representation theory and motivated by highest weight categories and quasi-hereditary algebras, Cline, Parshall and Scott investigated first when a recollement of module categories lifts to a recollement of (bounded) derived categories \cite{CPS, CPS_2, CPS_3}. The language of recollements of module categories was not used in the latter papers, but that was exactly the context. Typically, recollements of module categories arise from pairs $(R,e)$, where $R$ is a ring and $e$ is an idempotent element in $R$. More precisely, any pair $(R,e)$ induces a recollement between the module categories  over the rings $R$, $R/ReR$ and $eRe$. 
Cline, Parshall and Scott showed in \textit{loc.cit} that a recollement of module categories with $ReR$ being stratifying induces a recollement at the level of derived module categories. In that spirit, Krause  characterized highest weight categories via a sequence of recollements of abelian categories which lift to derived categories \cite{Krause}. Going back to the homological invariants, Angeleri H\"{u}gel, Koenig, Liu and Yang in 
\cite{AKLY_1, AKLY_2, AKLY_3, AKLY_4} investigated  which invariants can be computed inductively along a sequence of recollements of derived module categories, when recollements lift or restrict to various levels of derived categories and when a derived version of the Jordan-H\"older theorem holds. Several key ingredients of the latter program as well as examples, are based on the mechanism of lifting recollements of module categories to the associated derived categories.

Derived categories of nice enough abelian categories are instances of Quillen homotopy categories. Indeed, there exist certain well-known projective and injective model category structures on unbounded chain complexes having as (common) homotopy category the derived category. These model structures share the property of being \textit{hereditary} and \textit{abelian}, in short, this means that they are in bijection with certain hereditary and complete cotorsion pairs (Ext-orthogonal classes) in the underlying abelian category. The theory of abelian model structures has been developed extensively by Hovey and Gillespie in a series of papers such as \cite{Gil2011, hovey2002}. The upshot of working with a hereditary abelian model structure is that its homotopy category is canonically triangulated, in fact, it coincides with the stable category of a Frobenius category \cite{hoveybook, Gil2011, Happelbook}. Under this homotopical point of view and motivated by the discussion so far, we study the following general question. 

\begin{problem}
When a recollement of hereditary abelian model categories can be lifted to a recollement of their associated triangulated homotopy categories?
\end{problem}

In order to approach this problem, we first need to deal with the lifting of certain adjoint triples. Let us first look at the simple situation where we are given an associative ring $R$ with and idempotent $e$ and an adjoint triple of categories of chain complexes,
\begin{equation}
\label{eq:intro_adj_tri}
  \xymatrix@C=4pc{
\Ch(R) \ar@<0.0ex>[r] ^-{e}   &
\Ch(eRe) \ar@/^1.4pc/[l] ^-{r:=\mathrm{Hom}_{eRe}(eR,-)}, \ar@/_1.4pc/[l] _-{l:=Re\otimes_{eRe}-}
 }
\end{equation}
where the middle exact functor is multiplication by $e$ (degreewise). 
It is easy to see that since $e$ is exact, this adjoint triple lifts to one at the level of derived categories 
\begin{equation}
\label{eq:intro_adj_tri_der}
  \xymatrix@C=4pc{
 \Der(R)    \ar[r] |-{\, \textbf{R}e\cong\textbf{L}e \,}&
\Der(eRe). \ar@/^1.4pc/[l] ^-{\textbf{R}r} \ar@/_1.4pc/[l] _-{\textbf{L}l}  
  }
\end{equation} 
In terms of model categories, the upper half of the adjoint triple in (\ref{eq:intro_adj_tri}), i.e. the adjoint pair $(l,e)$, is well-behaved with respect to the (usual) projective model structure on chain complexes (the left derived functor $\mathbf{L}l$ is computed via resolutions by semi-projective complexes, which are the cofibrant objects in the projective model structure), while the lower part, i.e.\ the adjoint pair $(e,r)$ is well-behaved with respect to the (usual) injective model structure. To be more precise, by well-behaved we mean the formation of \textit{Quillen adjoint pairs}, which are adjoint pairs between model structures that induce  adjoint pairs at the level of homotopy categories. In fact, the projective and the injective model structures in this example are interconnected, in the sense that we have a commutative square\footnote{Here $\Ch(R)_{\mathsf{proj}}$, resp., $\Ch(R)_{\mathsf{inj}}$ just denotes the abelian category $\Ch(R)$, viewed as a model category with the projective, resp., injective model structure. Hence commutativity of the diagram, strictly speaking, expresses the identity natural transformation from $e\colon \Ch(R)\rightarrow\Ch(R)$ to itself.}

\begin{equation*}
  \xymatrix@C=3pc{
 \Ch(R)_{\mathsf{proj}}  \ar[r]^-{e}  \ar[d]_-{\mathrm{id}} &  \Ch(eRe)_{\mathsf{proj}} \ar[d]^-{\mathrm{id}} \\
 \Ch(R)_{\mathsf{inj}}  \ar[r]^-{e} & \Ch(eRe)_{\mathsf{inj}}
  }\!
\end{equation*}
where the top functor is right Quillen between projective model structures on chain complexes, the bottom functor is left Quillen between injective model structures and the vertical (left Quillen) identity functors induce equivalences at the level of derived categories. This is all the information needed to obtain diagram (\ref{eq:intro_adj_tri_der}).

We formalize the above situation by introducing the concept of \textit{Quillen adjoint triples} (Definition \ref{def:Quillen_adjoint_triple}). These are adjoint triples between model structures that behave well with respect to lifting to homotopy categories. Our definition involves certain natural transformations that compare composites of left and right Quillen functors and is inspired by work of Shulman \cite{Shulman}, who used the concept of \textit{double categories} in order to study such composites. The precise relation between  Shulman's work and our Quillen adjoint triples is given in Section \ref{sec:a_double_psadofunctor}.  

We then focus on recollements of Grothendieck categories 
\begin{equation*}
  \xymatrix@C=4pc{
\A   \ar@<0.0ex>[r] |-{\, i\,}   &  
  \B \ar@/^1.4pc/[l]^-{p}    \ar@/_1.4pc/[l] _-{q}  \ar[r] |-{\, e \,}&
  \C \ar@/_1.4pc/[l]  _-{l}  \ar@/^1.4pc/[l]^-{r}
  }\!
\tag*{$\mathsf{R_{ab}}(\A,\B,\C)$}
\end{equation*}
such that the following setup is satisfied:
\begin{itemize}
\item $\B$ admits two hereditary abelian model structures with Hovey triples $\B_{\mathsf{proj}}:=(\C_{\B},\W_{\B},\B)$ and $\B_{\mathsf{inj}}:=(\B,\W_{\B},\F_{\B})$ which are Quillen equivalent via the identity functor. Also, we assume same homotopical structure on $\C$.

\item The functor $e\colon \B\rightarrow\C$ preserves trivial objects, i.e., $e(\W_{\B})\subseteq\W_{\C}$.
\end{itemize}
The class $\W_{\B}$ is commonly the class of trivial objects for $\B_{\mathsf{proj}}$ and $\B_{\mathsf{inj}}$, thus they share the same homotopy category, which we denote by $\Ho(\B)_{\mathsf{proj/inj}}$. The same fact applies to $\C$, and we denote by $\Ho(\C)_{\mathsf{proj/inj}}$ the common homotopy category of $\C_{\mathsf{proj}}$ and $\C_{\mathsf{inj}}$.

We remark that this setup is typical and not too strong; we do not assume for instance that $\mathcal{A}$ admits any model structure a priori. We rather study, in Section \ref{sec:transfer}, \textit{constructions} of model structures on $\mathcal{A}$ along the functor $i$. By definition, we say that the \textit{right-lifted} abelian model structure, along the right adjoint $i$, exists on $\mathcal{A}$, if there is an (abelian) model structure on $\mathcal{A}$ with trivial objects $i^{-1}(\W_{\B})$ and fibrant objects $i^{-1}(\F_{\B})$ (the notation denotes preimages of the trivial and fibrant objects in $\B$ under the right adjoint $i$). In case this model on $\A$ exists we denote it by $\A_{\pi}$. Dually, we may speak of the \textit{left-lifted} abelian model structure on $\A$, along the left adjoint $i$, which we denote by $\A_{\iota}$. Note that when these model structures on $\A$ exist, they share the same class of trivial objects, which is $i^{-1}(\W_{\B})$, thus they share the same homotopy category, which we denote by $\mathsf{Ho}(\A)_{\pi/\iota}$. 
Transferring model structures along a left or right adjoint is a deeply studied topic in the theory of model categories and in Section \ref{sec:transfer} we translate certain classic results to the realm of abelian model structures and complete cotorsion pairs. In general, the existence of these transferred model structures is a non-trivial thing to check and heavily depends on context. However, the situation is better when the model category to be transferred is projective or injective (just like in the setup we use above). 
 
In our main result, Theorem \ref{thm:main_lift}, we investigate when, under the above setup, the existence of the left and right model structures on $\mathcal{A}$ imply the existence of a recollement of the associated Quillen homotopy categories
\begin{equation}
\label{eq:dia_in_main_intro}
\xymatrix@C=4pc{
\Ho(\A)_{\pi/\iota}  \ar@<0.0ex>[r] |-{\, \textbf{R}i\cong\textbf{L}i\,}   &  
\Ho(\B)_{\mathsf{\tiny{proj/inj}}}   \ar@/^1.4pc/[l]^-{\textbf{R}p}    \ar@/_1.4pc/[l] _-{\textbf{L}q}  \ar[r] |-{\, \textbf{R}e\cong\textbf{L}e \,}&
\Ho(\C)_{\mathsf{proj/inj}}. \ar@/_1.4pc/[l]  _-{\textbf{L}l}  \ar@/^1.4pc/[l]^-{\textbf{R}r}
  }
  \nonumber
\end{equation}
The point here is to study canonically induced functors $\mathsf{Ho}(\A)_{\pi}\rightarrow \ker (\mathbf{Le})$ and $\mathsf{Ho}(\A)_{\iota}\rightarrow \ker (\mathbf{Re})$  between the homotopy categories of the transferred models on $\A$ and the kernels of the left and right derived functors of $e$. When these maps are fully faithful we speak of \textit{derived embeddings} (Definition~\ref{defn:derembedding}), which in the classical case of chain complexes coincide with \textit{homological embeddings}, as studied by the second author in \cite{Psa_homological_theory}, see Proposition~\ref{prop:hom_embed}. In Section~\ref{sec:applications} we provide applications of the machinery developed for Gorenstein triangular matrix algebras and for $n$--morphism categories over Iwanaga-Gorenstein rings. An example is also given of a ring epimorphism which induces an adjoint triple between the stable category of Gorenstein projective modules over an Iwanaga-Gorenstein ring and the associated transferred homotopy category (Example~\ref{ex:Gor_morphism_triple}).

We end the introduction with a description of the contents of the paper. In Section~\ref{model_cotorsion} we recall several preliminaries from abelian model structures and cotorsion pairs that are used extensively throughout the paper. We also briefly recall the theory of Quillen functors and in Proposition \ref{prop:left_Quillen_triang} we prove that the total left derived functor of a left Quillen functor between hereditary abelian model structures is a triangulated functor. In Section~\ref{sec:Quillen_adj_triples} we introduce Quillen adjoint triples (Definition~\ref{def:Quillen_adjoint_triple}) and examine cases of Quillen adjunctions where the induced derived functors are fully faithful (Propositions~\ref{prop:fully_faithful_right_derived} and~\ref{prop:left_right_fully_faithful}). 
In Section \ref{sec:a_double_psadofunctor} we relate Quillen adjoint triples to work of Shulman on a double pseudofunctor from the double category of model categories to the double category of categories (Theorem~\ref{thm:double_vs_triple}). 

In Section~\ref{sec:transfer} we study transfer of abelian model structures along a left or a right adjoint. More precisely, in Theorem~\ref{thm:right_lifting} we provide sufficient conditions for the existence of the right-lifting of abelian model structures. The conditions are in terms of certain preservation properties of the involved functors and of an acyclicity condition.  Also, in Theorem~\ref{thm:left_lifting} we treat the existence of left-lifting. The section ends with an example of transferring the projective (or injective) model structure in the category of complexes (Propositions \ref{prop:transfer_trivial_proj}, \ref{prop:transfer_trivial_inj}).   
Section~\ref{sec:Lifting_recoll} is devoted to our lifting result on recollements (Theorem~\ref{thm:main_lift}). Finally, Section~\ref{sec:applications} contains three applications of our main results. We construct recollements of stable categories of Cohen-Macaulay modules over triangular matrix algebras (Theorem~\ref{thm:triangular_lifting}) and over the $n$-morphism category of an abelian category (Theorem~\ref{thm:n-lifting}), and we recover the result of Cline, Parhall, and Scott on recollements of derived module categories (Corollary~\ref{cor:CPS}).

\section{Abelian model structures and complete cotorsion pairs}
\label{model_cotorsion}
In this section we recall fundamental concepts concerning abelian model structures. The references for this material include Hovey \cite{hoveybook,hovey2002}, Gillespie \cite{Gil2011} and the survey article of \v{S}{\fontencoding{T1}\selectfont \v{t}}ov\'i\v{c}ek  \cite{Stoviceksurvey}. Proposition \ref{prop:left_Quillen_triang} seems to be new.

\begin{definition}
\label{wfs}
A \textbf{weak factorization system} in a category $\M$ is a pair of classes of morphisms $(\clL,\R)$ such that the following hold:
\begin{itemize}
\item[(i)] Every morphism $f$ in $\M$, admits a factorization as $f=r\circ l$, where $l\in\clL$ and $r\in\R$.
\item[(ii)] $\clL^{\square}=\R$ and $^{\square}\R=\clL$.
\end{itemize}
Here $\clL^{\square}$ denotes the class of morphisms in $\M$ that have the \textit{right lifting property} with respect to all morphisms in the class $\clL$, that is, $g\in\clL^{\square}$ if and only if given the solid part of a diagram as below
\begin{displaymath}
\label{eq:dia}
 \xymatrix@C=2pc{
\cdot \ar[d]_-{l} \ar[r] &  \cdot \ar[d]^{g} \\
\cdot \ar[r] \ar@{-->}[ur]^-{\delta} & \cdot
 }
\end{displaymath}
with $l\in\clL$, there exists a dotted diagonal $\delta$ such that the two triangles commute. Similarly, the class $^{\square}\R$ contains all the morphisms in $\M$ that have the \textit{left lifting property} with respect to every morphism in $\R$.

We call a weak factorization system \textbf{functorial} if its associated factorizations, as in (i), are functorial in $f$.
\end{definition}

The following two definitions are suitable adjustments of the classical notions in the context of abelian categories.

\begin{definition}
An \textbf{abelian weak factorization system} in an abelian category $\M$ is a weak factorization system $(\clL,\R)$ in $\M$ such that:
\begin{itemize}
\item[(i)] A morphism $f$ belongs in $\clL$ if and only if $f$ is a monomorphism and $0\rightarrow\coker (f)$ is in $\clL$.
\item[(ii)] A morphism $f$ belongs in $\R$ if and only if $f$ is an epimorphism and $\ker(f)\rightarrow 0$ is in $\R$.
\end{itemize}
\end{definition}

\begin{definition}
\label{dfn:abelian_model_structure}
Let $\M$ be a bicomplete abelian category. An \textbf{abelian model structure} on $\M$ consists of three classes of morphisms $\mathrm{cof}_{\M}, \mathrm{weak}_{\M}$, and $\mathrm{fib}_{\M}$ in $\M$, such that:
\begin{itemize}
\item[(i)] The pairs $(\mathrm{cof}_{\M},\mathrm{weak}_{\M}\cap\mathrm{fib}_{\M})$ and $(\mathrm{cof}_{\M}\cap\mathrm{weak}_{\M},\mathrm{fib}_{\M})$ are functorial abelian weak factorizations systems in $\M$.
\item[(ii)] The class $\mathrm{weak}_{\M}$ is closed under retracts and satisfies the 2-out-of-3 property with respect to compositions, i.e., if two of $f$, $g$ or $f\circ g$ (when defined) belong in $\mathrm{weak}_{\M}$, then so does the third.
\end{itemize}

If we remove the word ``abelian'' from this definition we obtain the usual concept of a model category, cf. \cite[Def.~1.1.4]{hoveybook}. However, we point out that there is a version of the definition of a model category that assumes only the existence of \textit{finite} limits and colimits in $\M$, see \cite[Def.~7.7]{Joyal}, or even less  assumptions, as in \cite[Def.~6.1]{Stoviceksurvey}, where functoriality is not part of the definition. For the main results in this paper, all model categories will be bicomplete abelian and cofibrantly generated; thus Definition \ref{dfn:abelian_model_structure} serves us well. 
 
We call the morphisms in $\mathrm{cof}_{\M}, \mathrm{fib}_{\M}$, and $\mathrm{weak}_{\M}$, \textit{cofibrations}, \textit{fibrations}, and \textit{weak equivalences} respectively. Morphisms in $\mathrm{cof}_{\M}\cap\mathrm{weak}_{\M}$ and $\mathrm{weak}_{\M}\cap\mathrm{fib}_{\M}$ are called \textit{trivial cofibratrions} and \textit{trivial fibrations} respectively. An object $M$ in $\M$ such that $0\rightarrow M$ or $M\rightarrow 0$ is a weak equivalence, is called a \textit{trivial} object. Also, $M$ is called \textit{(trivially) cofibrant} if $0\rightarrow M$ is a (trivial) cofibration and \textit{(trivially) fibrant} if $M\rightarrow 0$ is a (trivial) fibration.
\end{definition}

Unravelling the definitions, we observe that in an abelian model structure the (trivial) cofibrations in $\M$ are monomorphisms with (trivially) cofibrant cokernel, and that the (trivial) fibrations in $\M$ are epimorphisms with (trivially) fibrant kernel.

Next, we recall how the above notions are related to cotorsion pairs.

\begin{definition}
\label{dfn:cot_pairs}
A \textbf{cotorsion pair} $(\class A,\class B)$ in an abelian category $\class M$, consists of two full subcategories $\class A$ and $\class B$ such that the following hold:
\begin{eqnarray}
\class B=\rightperp{\class A}:=\{M\in\class M\,\,|\,\, \forall A\in\class A,\, \Ext^{1}_{\class M}(A,M)=0\},  \nonumber \\
\class A=\leftperp{\class B}:=\{M\in\class M\,\,|\,\, \forall B\in\class B,\, \Ext^{1}_{\class M}(M,B)=0\}. \nonumber
\end{eqnarray}
Here $\Ext^{1}_{\class M}(-,-)$ denotes the Yoneda Ext-bifunctor. A cotorsion pair $(\class A,\class B)$ in $\class M$ is called \textbf{complete} if for each object $M$ in $\class M$, there exists a short exact sequence $0\rightarrow B\rightarrow A\rightarrow M\rightarrow 0$ with $A\in\class A$ and $B\in\class B$, and also a short exact sequence $0\rightarrow M\rightarrow B'\rightarrow A'\rightarrow 0$ with $A'\in\class A$ and $B'\in\class B$. It is called \textbf{hereditary} if for all $A\in\class A$, $B\in\class B$ and $i\geqslant 1$, we have $\Ext_{\class M}^{i}(A,B)=0$.
\end{definition}

The following fact is essentially due to Hovey \cite{hovey2002}. Here we use the language in which it is stated in \cite{Stoviceksurvey} (where it is given in a more general context).

\begin{fact}\textnormal{(\cite[Thm.~5.13]{Stoviceksurvey})}
\label{fact:wfs_vs_cot_pairs}
Let $\M$ be an abelian category. The mappings
\[(\clL,\R)\longmapsto (\coker(\clL),\ker(\R)) \,\,\,\,\,\,\, \mbox{and} \,\,\,\,\,\,\, (\A,\B)\longmapsto (\mathrm{Mono}(\A),\mathrm{Epi}(\B))\]
define mutually inverse bijections between (functorial) abelian weak factorization systems in $\M$ and (functorially) complete cotorsion pairs in $\M$.
Here $\coker(\clL)$ denotes the class of objects in $\M$ which are isomorphic to $\coker(f)$ for some morphism $f$ in $\clL$, and dually for $\ker(\R)$. Also, $\mathrm{Mono}(\A)$ is the class of monomorphisms in $\M$ with cokernel in $\A$, and dually, $\mathrm{Epi}(\A)$ is the class of epimorphisms in $\M$ with kernel in $\B$. 
\end{fact}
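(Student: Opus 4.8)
\emph{Sketch of proof.} The strategy is to reduce the correspondence to a homological dictionary between lifting properties and $\Ext^{1}$-vanishing, together with the construction of factorizations from the short exact sequences supplied by completeness; the remaining points are bookkeeping with the definition of an abelian weak factorization system and the formal retract argument. For a full subcategory $\B\subseteq\M$ write $\leftperp{\B}=\{C\mid\Ext^{1}_{\M}(C,B)=0\ \text{for all}\ B\in\B\}$, and dually $\rightperp{\A}$; such classes are closed under direct summands. I would first record: (a) for a monomorphism $\iota$ and an epimorphism $\pi$, one has $\iota\lifts\pi$ whenever $\Ext^{1}_{\M}(\coker\iota,\ker\pi)=0$ --- pulling the lifting square back along $\pi$, a diagonal filler is the same as a splitting of an induced extension of $\coker\iota$ by $\ker\pi$, whose class lies in that $\Ext^{1}$ group; (b) $0\to C$ has the left lifting property against every epimorphism with kernel in $\B$ iff $C\in\leftperp{\B}$, the nontrivial direction coming from testing against the epimorphisms $E\to C$ attached to arbitrary extensions $0\to B\to E\to C\to 0$ with $B\in\B$ (a filler being a splitting), and dually for $K\to 0$ and $\rightperp{\A}$; and (c) if $(\A,\B)$ is a complete cotorsion pair, every morphism in ${}^{\square}\mathrm{Epi}(\B)$ is a monomorphism, because completeness embeds every object into an object of $\B$ and that embedding extends along any such morphism.

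For the assignment $(\clL,\R)\mapsto(\coker\clL,\ker\R)$, the definition of an abelian weak factorization system gives immediately $\clL=\mathrm{Mono}(\coker\clL)$, $\R=\mathrm{Epi}(\ker\R)$, $\coker\clL=\{C\mid(0\to C)\in\clL\}$ and $\ker\R=\{K\mid(K\to 0)\in\R\}$. Since $\clL={}^{\square}\R={}^{\square}\mathrm{Epi}(\ker\R)$, part (b) yields $\coker\clL=\leftperp{(\ker\R)}$ and dually $\ker\R=\rightperp{(\coker\clL)}$, so this is a cotorsion pair in the sense of Definition~\ref{dfn:cot_pairs}. Its completeness is read off from the factorization axiom applied to the morphisms $0\to M$ and $M\to 0$: a factorization $0\to Z\xrightarrow{r}M$ with $(0\to Z)\in\clL$, $r\in\R$ gives $0\to\ker r\to Z\to M\to 0$ with $Z\in\coker\clL$, $\ker r\in\ker\R$, and dually for $M\to 0$.

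For the assignment $(\A,\B)\mapsto(\mathrm{Mono}(\A),\mathrm{Epi}(\B))$, put $\clL=\mathrm{Mono}(\A)$, $\R=\mathrm{Epi}(\B)$; these are closed under retracts (a retract of a monomorphism is a monomorphism whose cokernel is a summand of the original, and $\A=\leftperp{\B}$ is closed under summands; dually for $\R$) and satisfy the ``abelian'' clauses of the definition by construction, using $\coker(\mathrm{Mono}(\A))=\A$ and $\ker(\mathrm{Epi}(\B))=\B$. Part (a) gives the inclusions $\mathrm{Mono}(\A)\subseteq{}^{\square}\mathrm{Epi}(\B)$ and $\mathrm{Epi}(\B)\subseteq\mathrm{Mono}(\A)^{\square}$; the reverse inclusions follow, by the standard retract argument, once we know every morphism factors as a map in $\mathrm{Mono}(\A)$ followed by a map in $\mathrm{Epi}(\B)$ (e.g. given $g\in\mathrm{Mono}(\A)^{\square}$, factor $g=\pi\iota$ and solve the lifting problem of $\iota$ against $g$ to present $g$ as a retract of $\pi\in\mathrm{Epi}(\B)$; dually, and using (c), for ${}^{\square}\mathrm{Epi}(\B)\subseteq\mathrm{Mono}(\A)$). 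That both composites are the identity is then immediate: starting from $(\clL,\R)$ we have $\clL=\mathrm{Mono}(\coker\clL)$ and $\R=\mathrm{Epi}(\ker\R)$, and starting from $(\A,\B)$ we have $\coker(\mathrm{Mono}(\A))=\A$ (since $A=\coker(0\to A)$) and $\ker(\mathrm{Epi}(\B))=\B$; functoriality propagates through both directions.

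Everything therefore hinges on the factorization of an arbitrary $f\colon X\to Y$ as $\pi\iota$ with $\iota\in\mathrm{Mono}(\A)$, $\pi\in\mathrm{Epi}(\B)$, and this is the main obstacle. The naive construction --- take a special $\A$-precover $p\colon A_{Y}\twoheadrightarrow Y$ from completeness and use $(f,p)\colon X\oplus A_{Y}\to Y$, whose restriction along $X\hookrightarrow X\oplus A_{Y}$ is $f$ and is a monomorphism with cokernel $A_{Y}\in\A$ --- fails, because $\ker(f,p)$ is an extension of $X$ by $\ker p\in\B$ and lies in $\B$ in general only when $X$ already does. The correct argument is Hovey's (cf. \cite{hovey2002}, \cite{Gil2011}): one interleaves a special $\B$-preenvelope of $X$ with a special $\A$-precover and performs a pushout followed by a pullback, arranged so that the obstruction to the required diagonal filler lands in a group $\Ext^{1}_{\M}(A,B)$ with $A\in\A$, $B\in\B$, hence vanishes. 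Carrying out this pushout--pullback construction, and verifying that it can be made functorial when the cotorsion pair is functorially complete, is where the real work lies.
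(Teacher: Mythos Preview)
The paper does not prove this statement at all: it is recorded as a \emph{Fact} with a citation to \cite[Thm.~5.13]{Stoviceksurvey}, and the text moves on immediately to Hovey's correspondence. So there is no in-paper proof to compare your proposal against.

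That said, your sketch is essentially the standard argument and is correct in outline. The reduction of the lifting property to $\Ext^{1}$-vanishing, the extraction of completeness from factorizations of $0\to M$ and $M\to 0$, and the retract argument for the closure conditions are all exactly as one finds them in \cite{hovey2002} and \cite{Stoviceksurvey}. Your identification of the factorization as the crux, and of the failure of the naive $(f,p)\colon X\oplus A_Y\to Y$ construction, is also right. One small correction: the repair is not really ``a pushout followed by a pullback'' but a single pushout after the naive step. Having formed $\pi_0=(f,p)\colon X\oplus A_Y\to Y$ with $K:=\ker\pi_0$ an extension of $X$ by $B'\in\B$, take a special $\B$-preenvelope $K\hookrightarrow B_K$ with cokernel $A''\in\A$ and push out $X\oplus A_Y$ along $K\hookrightarrow B_K$ to obtain $Z$. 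The induced $\pi\colon Z\to Y$ is then epi with kernel $B_K\in\B$, while $X\to X\oplus A_Y\to Z$ is mono with cokernel an extension of $A''$ by $A_Y$, hence in $\A$ since $\A=\leftperp{\B}$ is extension-closed. Functoriality is inherited from functorial completeness of the cotorsion pair. With this adjustment your sketch goes through.
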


We also spell out Hovey's original result from \cite{hovey2002}. 

\begin{fact}
\textnormal{(\cite[Thm.~2.2]{hovey2002})}
Consider an abelian model structure on a bicomplete abelian category $\M$ where $\C_{\M}, \F_{\M}$ and $\W_{\M}$ denote the classes of cofibrant, fibrant and trivial objects respectively. Then there exist (functorially) complete cotorsion pairs $(\class C_{\M}\cap\class W_{\M},\class F_{\M})$ and $(\class C_{\M},\class W_{\M}\cap\class F_{\M})$ in $\class M$, where $\class W_{\M}$ is closed under summands and is a thick subcategory of $\class M$.\footnote{Thick here means that for any short exact sequence in $\class M$, if two of its terms belong in $\class W$, then so does the third.}

Conversely, if there exist classes $\class C_{\M}, \class W_{\M}$ and $\class F_{\M}$ of objects in $\class M$, where $\class W_{\M}$ is thick and closed under summands, and (functorially) complete cotorsion pairs $(\class C_{\M}\cap\class W_{\M},\class \F_{\M})$ and $(\class C_{\M},\class W_{\M}\cap\class F_{\M})$ in $\class M$, then $\class M$ is an abelian model structure where $\class C_{\M}, \class W_{\M}$ and $\class F_{\M}$ are the classes of cofibrant, fibrant and trivial objects respectively.
\end{fact}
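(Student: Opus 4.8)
The plan is to run the dictionary of Fact~\ref{fact:wfs_vs_cot_pairs} in both directions, so that the whole statement reduces to bookkeeping with the four classes of (trivially) cofibrant and fibrant objects together with a careful analysis of the class $\W_\M$.

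\emph{From a model structure to cotorsion pairs.} Given an abelian model structure, Definition~\ref{dfn:abelian_model_structure}(i) supplies two functorial abelian weak factorisation systems, namely $(\mathrm{cof}_\M,\mathrm{weak}_\M\cap\mathrm{fib}_\M)$ and $(\mathrm{cof}_\M\cap\mathrm{weak}_\M,\mathrm{fib}_\M)$, and I would feed these into Fact~\ref{fact:wfs_vs_cot_pairs} to obtain functorially complete cotorsion pairs. Their components are then read off from the description of abelian weak factorisation systems recalled right after Definition~\ref{dfn:abelian_model_structure}: the cokernels of cofibrations are exactly the cofibrant objects, the kernels of trivial fibrations are exactly the trivially fibrant objects, and so on, so the two pairs come out as $(\C_\M,\W_\M\cap\F_\M)$ and $(\C_\M\cap\W_\M,\F_\M)$. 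That $\W_\M$ is closed under summands is immediate from closure of $\mathrm{weak}_\M$ under retracts in Definition~\ref{dfn:abelian_model_structure}(ii). Thickness of $\W_\M$ is the only genuinely homotopical point of this half: given a short exact sequence with two terms in $\W_\M$, one factors the relevant maps through (trivially) cofibrant or fibrant objects and invokes the $2$-out-of-$3$ property of $\mathrm{weak}_\M$; this is a short but standard diagram chase.

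\emph{From cotorsion pairs to a model structure.} Conversely, suppose $\C_\M$, $\W_\M$, $\F_\M$ are given with $\W_\M$ thick and closed under summands, and with $(\C_\M\cap\W_\M,\F_\M)$ and $(\C_\M,\W_\M\cap\F_\M)$ functorially complete cotorsion pairs. Fact~\ref{fact:wfs_vs_cot_pairs} again produces two functorial abelian weak factorisation systems, one with classes $(\mathrm{Mono}(\C_\M),\mathrm{Epi}(\W_\M\cap\F_\M))$ and one with classes $(\mathrm{Mono}(\C_\M\cap\W_\M),\mathrm{Epi}(\F_\M))$. I would set $\mathrm{cof}_\M:=\mathrm{Mono}(\C_\M)$, $\mathrm{fib}_\M:=\mathrm{Epi}(\F_\M)$, and declare a morphism to be a weak equivalence precisely when it factors as a monomorphism with cokernel in $\W_\M$ followed by an epimorphism with kernel in $\W_\M$. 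It is then routine to check that this class contains all isomorphisms, that (by the argument that $0\in\W_\M$) every trivial cofibration and every trivial fibration is a weak equivalence, and conversely that an epimorphic (resp.\ monomorphic) weak equivalence has kernel (resp.\ cokernel) in $\W_\M$: if $f=q\circ i$ is epic with $\coker i,\ker q\in\W_\M$, then $\ker f$ sits in a short exact sequence $0\to\ker f\to\ker q\to\coker i\to 0$, so thickness forces $\ker f\in\W_\M$.

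\emph{The main obstacle.} The heart of the matter is verifying the $2$-out-of-$3$ and retract axioms for this class of weak equivalences, together with the compatibility identities $\mathrm{cof}_\M\cap\mathrm{weak}_\M=\mathrm{Mono}(\C_\M\cap\W_\M)$ and $\mathrm{weak}_\M\cap\mathrm{fib}_\M=\mathrm{Epi}(\W_\M\cap\F_\M)$; these, combined with the two weak factorisation systems already in hand, give Definition~\ref{dfn:abelian_model_structure}(i)-(ii). For $2$-out-of-$3$ I would proceed by reducing, after suitable factorisations and the formation of the evident pushouts and pullbacks along maps whose kernel or cokernel lies in $\W_\M$, to the statement that $\W_\M$ is closed under extensions and under kernels and cokernels of its own monomorphisms and epimorphisms, i.e.\ exactly to thickness; this is where I expect essentially all of the real work to go. Once $2$-out-of-$3$ is available, the compatibility identities follow by the usual retract argument: for $f\in\mathrm{cof}_\M\cap\mathrm{weak}_\M$, factor $f=q\circ i$ with $i\in\mathrm{Mono}(\C_\M\cap\W_\M)$ and $q\in\mathrm{Epi}(\F_\M)$; since $i$ and $f$ are weak equivalences so is $q$, whence $\ker q\in\W_\M\cap\F_\M$ and $q$ is a trivial fibration, so $f$ (being a cofibration) lifts against $q$ and is thereby exhibited as a retract of $i$, hence lies in $\mathrm{Mono}(\C_\M\cap\W_\M)$ because left classes of weak factorisation systems are retract-closed; the dual argument treats fibrations, and here closure of $\W_\M$ under summands is what keeps the retract inside the relevant class. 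Finally, a short computation as in the second paragraph identifies the trivial objects of the constructed structure with $\W_\M$ and its (trivially) (co)fibrant objects with the expected intersections, and functoriality of all factorisations is inherited from the functorial completeness of the two cotorsion pairs.
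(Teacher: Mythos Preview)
The paper does not provide its own proof of this statement: it is recorded as a Fact with a direct citation to \cite[Thm.~2.2]{hovey2002}, and the text moves immediately to the definition of Hovey triples. So there is nothing to compare against at the level of argument.

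That said, your sketch is a faithful outline of the standard (Hovey) proof and uses exactly the ingredient the paper sets up for this purpose, namely the bijection of Fact~\ref{fact:wfs_vs_cot_pairs}. Your identification of the two cotorsion pairs from the two abelian weak factorisation systems is correct, your definition of weak equivalences in the converse direction agrees with the characterisation the paper later cites from \cite[Prop.~2.3]{Gilsurvey}, and your snake-lemma computation $0\to\ker f\to\ker q\to\coker i\to 0$ for an epic $f=q\circ i$ is right (surjectivity of $\ker q\to\coker i$ uses that $f$ epic forces $i(X)+\ker q=Y$). You are also right that the substantive work is the $2$-out-of-$3$ axiom; in Hovey's argument this is indeed a sequence of pushout/pullback manipulations that repeatedly appeal to thickness of $\W_\M$, and your retract argument for the compatibility $\mathrm{cof}_\M\cap\mathrm{weak}_\M=\mathrm{Mono}(\C_\M\cap\W_\M)$ is the standard one. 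One small remark: closure of $\W_\M$ under summands is not really needed for the retract step you describe (retract-closure of left classes of a weak factorisation system suffices there); where it enters is in showing that the class of weak equivalences itself is retract-closed and, relatedly, that the trivial objects of the resulting model structure coincide with $\W_\M$.
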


\begin{definition}
\label{def:Hovey_triples}
Given an abelian model structure on a bicomplete abelian category $\M$ where $\C_{\M},\F_{\M}$ and $\W_{\M}$ denote the classes of cofibrant, fibrant and trivial objects respectively, we abbreviate by saying that $(\C_{\M},\W_{\M},\F_{\M})$ is a \textbf{Hovey triple} on $\M$.  In case the associated cotorsion pairs that define $\M$ are hereditary, we say that $(\C_{\M},\W_{\M},\F_{\M})$ is a \textbf{hereditary Hovey triple} on $\M$.
\end{definition}

\subsection*{Homotopy categories}
\label{subsec:homotopy_cats}
Let $\M$ be an abelian model structure with Hovey triple $(\C_{\M},\W_{\M},\F_{\M})$. By definition, the homotopy category $\Ho(\M)$, is the localization of $\M$ with respect to the class of weak equivalences. We note that from \cite[Prop.~2.3]{Gilsurvey} the class of weak equivalences $\mathrm{weak}_{\M}$ consists precisely of those morphisms in $\M$ that factor as a monomorphism with trivial cokernel followed by an epimorphism with trivial kernel. Henceforth we denote weak equivalences by $\sim$.

From the general theory of model categories, we know that $\Ho(\M)$ is equivalent to a quotient category $\C_{\M}\cap\F_{\M}/r$\, , where\, $r$ is a certain ``homotopy'' equivalence relation, see for instance \cite[Theorem.~1.2.10]{hoveybook}.

Now, assume that $\M$ is a hereditary abelian model structure. In that case, from Gillepsie \cite{Gil2011} we know that the category $\M_{cf}:=\C_{\M}\cap\F_{\M}$ is additive Frobenius with projective-injective objects equal to the class $\C_{\M}\cap\F_{\M} \cap\W_{\M}$, and that the equivalence relation $r$ identifies two parallel maps if they factor through a projective-injective object (see also \cite[Prop.~4.2]{Gilsurvey}). Thus we obtain that $\M_{cf}/r$ is the same as $\underline{\M}_{cf}$, where the latter denotes the stable category of the Frobenius category $\M_{cf}$. In total, we have that the composite
\begin{displaymath}
 \xymatrix@C=2pc{
 \underline{\M}_{cf}=\M_{cf}/r \ar[r]^-{\mathrm{can}}   &  \Ho(\M_{cf}) \ar[r]^-{\Ho(\mathrm{inc})} & \Ho(\M)
 }
\end{displaymath}
is an equivalence of categories. 

Stable categories of Frobenius categories are triangulated \cite[I.2]{Happelbook}, hence the above equivalence imposes a triangulated structure on $\Ho(\M)$, with its distinguished triangles being isomorphic to the images of the distinguished triangles in $\underline{\M}_{cf}$ via the aforementioned equivalence. We now describe a standard method to produce a triangle in $\Ho(\M)$ starting from the hereditary abelian model structure $\M$. To that end, following \cite[Definition~6.16]{Stoviceksurvey}, we call a diagram of arrows and morphisms $X\xrightarrow{u} Y\xrightarrow{v} Z\xrightarrow{w} \Sigma X$ in $\M$ a \textbf{cofiber sequence} if it fits to a commutative diagram
\begin{displaymath}
 \xymatrix@C=2pc{
0 \ar[r] & X\ar[d]^-{u}\ar[r]   & W\ar[r] \ar[d]  & \Sigma X\ar@{=}[d]\ar[r] & 0 \\
0 \ar[r]  & Y \ar[r]^-{v} & Z\ar[r]^-{w} & \Sigma X \ar[r] & 0,
 }
\end{displaymath}
where the rows are short exact sequences in $\M$ and $W\in\W_{\M}$. 

Any given cofiber sequence in $\M$ as above defines a triangle in $\Ho(\M)$. Indeed, using the completeness of the cotorsion pairs $(\C_{\M},\W_{\M}\cap\F_{\M})$ and $(\C_{\M}\cap\W_{\M},\F_{\M})$, we may construct (see \cite[Thm.~6.21]{Stoviceksurvey}) a cofiber sequence \[X'\xrightarrow{u'} Y'\xrightarrow{v'} Z'\xrightarrow{w'} \Sigma X'\] of fibrant and cofibrant objects in $\M$, which is a triangle in the triangulated category $\underline{\M}_{cf}$ and is isomorphic in $\Ho(\M)$ to the initial cofiber sequence. In fact, every triangle in $\Ho(\M)$ is induced from a cofiber sequence of fibrant and cofibrant objects; this essentially uses the triangulated structure on $\underline{\M}_{cf}$.

\begin{remark}
In any model category (with functorial weak factorization systems) there exist cofibrant and fibrant replacement functors, see for instance \cite[page~5]{hoveybook}.  In case $\M$ is an abelian model structure with Hovey triple $(\C_{\M},\W_{\M},\F_{\M})$, functoriality of the complete cotorsion pair $(\C_{\M},\W_{\M}\cap\F_{\M})$ defines a functor $Q_{\M}\colon\M\rightarrow\C_{\M}$ which is called \textbf{cofibrant replacement}. 
In fact, for any object $X$ in $\M$, there exists a short exact sequence, natural in $X$,  
\begin{equation}
 \xymatrix@C=2pc{
0\ar[r] & K \ar[r]  &  Q_{\M}X \ar[r]^-{q^{\M}_{X}}_{\sim} & X  \ar[r] & 0,
}
\nonumber
\end{equation}
with $Q_{\M}X$ in $\mathcal{C}_{\mathcal{M}}$ and $K$ in $\F_{\M}\cap\W_{\M}$. Note that the morphism $q^{\mathcal{M}}_{X}$ is a trivial fibration. 
Similarly, a \textbf{fibrant replacement} functor $R_{\M}\colon\M\rightarrow\F_{\M}$ is provided by functoriality of the complete cotorsion pair $(\C_{\M}\cap\W_{\M},\F_{\M})$, and for any $X$ in $\M$ there exists a short exact sequence which is natural in $X$, 
\begin{equation}
 \xymatrix@C=2pc{
0\ar[r] & X \ar[r]^-{r^{\M}_{X}}_-{\sim}  &  R_{\M}X \ar[r] & C  \ar[r] & 0,
}
\nonumber
\end{equation}
where $R_{\M}X\in\F_{\M}$, $C\in\C_{\M}\cap\W_{\M}$ and the morphism $r^{\M}_{X}$ is a trivial cofibration. When things are clear from context, we might drop the subscript $``\M''$ from the functors $Q_{\M}$ and $R_{\M}$.
\end{remark}

\subsection{Quillen functors} 

We briefly recall some generalities on Quillen functors. 

A functor between model categories is called \textbf{left Quillen} if it is a left adjoint and maps cofibrations to cofibrations and trivial cofibrations to trivial cofibrations. Dually, it is called \textbf{right Quillen} if it is a right adjoint and maps fibrations to fibrations and trivial fibrations to trivial fibrations. An (ordinary) adjunction between model categories where the left adjoint is left Quillen, or equivalently, from \cite[Lem.~1.3.4]{hoveybook}, the right adjoint is right Quillen, is called a \textbf{Quillen adjunction}.

We will assume that the definition of total left (or right) derived functors is known, see \cite[\S8.2]{Stoviceksurvey} for a discussion that also fits into the context of this paper. We just mention that if 
 $F\colon \M\rightarrow \C$ is a left Quillen functor between model categories, then its total left derived functor $\textbf{L}F\colon\Ho(\M)\rightarrow\Ho(\C)$ can be computed on objects as follows: For every $X$ in $\Ho(\M)$, take the cofibrant replacement $QX$ of $X$ and define $\mathbf{L}F(X):=F(QX)$. 
Similarly, if $G\colon \M\rightarrow \C$ is a right Quillen functor between model categories, its total right derived functor $\textbf{R}G\colon\Ho(\M)\rightarrow\Ho(\C)$ can be computed on objects as follows: For every $X$ in $\Ho(\M)$, take the fibrant replacement $RX$ of $X$ and define $\mathbf{R}G(X):=F(RX)$. 

We close this section with the following fundamental observation.

\begin{proposition}
\label{prop:left_Quillen_triang}
Let $F\colon\A\rightarrow\B$ be a left Quillen functor between hereditary abelian model structures. Then the total left derived functor $\textbf{L}F\colon\Ho(\A)\rightarrow \Ho(\B)$ is a triangulated functor.
\end{proposition}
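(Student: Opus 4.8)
The plan is to compute $\mathbf{L}F$ through the explicit formula $X\mapsto F(Q_{\A}X)$ recalled above and to exploit the dictionary between distinguished triangles in $\Ho(\A)$, $\Ho(\B)$ and cofiber sequences.

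\emph{Step 1 (elementary properties of $F$).} Since $F$ is a left adjoint it is additive and preserves all colimits, in particular cokernels and pushouts; since it is left Quillen it preserves cofibrations and trivial cofibrations. Because in an abelian model structure the (trivial) cofibrations are precisely the monomorphisms with (trivially) cofibrant cokernel, applying $F$ to $0\to W$ shows that $F$ preserves (trivially) cofibrant objects, and applying $F$ to a short exact sequence $0\to A\to B\to C\to 0$ with $C$ cofibrant again produces a short exact sequence $0\to FA\to FB\to FC\to 0$ with $FC$ cofibrant: indeed $A\to B$ is a cofibration, hence $FA\to FB$ is a cofibration and in particular a monomorphism, while $FC=\Coker(FA\to FB)$ since $F$ preserves cokernels. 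Finally, by Ken Brown's lemma $F$ sends weak equivalences between cofibrant objects to weak equivalences, so for cofibrant $X$ the natural weak equivalence $Q_{\A}X\to X$ maps to a weak equivalence $F(Q_{\A}X)\to F(X)$; hence $\mathbf{L}F(X)\cong F(X)$ in $\Ho(\B)$, naturally in such $X$. As every object of $\Ho(\A)$ is isomorphic to a fibrant–cofibrant one, it suffices from now on to work with objects and cofiber sequences consisting of fibrant–cofibrant objects, on which $\mathbf{L}F$ agrees with $F$.

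\emph{Step 2 ($\mathbf{L}F$ commutes with suspension).} Let $X\in\A_{cf}$ and choose the short exact sequence $0\to X\to W\to \Sigma X\to 0$ defining $\Sigma X$, where $W$ is projective–injective in $\A_{cf}$, i.e. $W\in\C_{\A}\cap\W_{\A}\cap\F_{\A}$ (and $\Sigma X\in\A_{cf}$ again). Applying $F$ and using Step 1 we get a short exact sequence $0\to FX\to FW\to F\Sigma X\to 0$ in $\B$ with $FX,F\Sigma X$ cofibrant and $FW$ trivially cofibrant. Now replace $FX$ and the middle term by fibrant–cofibrant objects along the functorial fibrant replacement: a routine pushout argument keeps the sequence short exact, leaves $F\Sigma X$ unchanged up to a weak equivalence with trivially cofibrant cokernel, and promotes the middle term to a projective–injective object of $\B_{cf}$. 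The resulting conflation in $\B_{cf}$ exhibits $F\Sigma X$ as isomorphic to $\Sigma(FX)$ in $\Ho(\B)$. Functoriality of all replacements used makes this isomorphism natural in $X\in\A_{cf}$, and extending along fibrant–cofibrant replacement yields a natural isomorphism $\varphi\colon\mathbf{L}F\circ\Sigma\xrightarrow{\ \sim\ }\Sigma\circ\mathbf{L}F$.

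\emph{Step 3 ($\mathbf{L}F$ preserves triangles).} Every triangle in $\Ho(\A)$ is isomorphic to the one induced by a cofiber sequence $X\xrightarrow{u}Y\xrightarrow{v}Z\xrightarrow{w}\Sigma X$ of fibrant–cofibrant objects; by definition this is the data of short exact sequences $0\to X\to W\to\Sigma X\to 0$ and $0\to Y\to Z\to\Sigma X\to 0$ with $Z=W\sqcup_{X}Y$ and $W\in\W_{\A}$ (in fact $W\in\C_{\A}\cap\W_{\A}\cap\F_{\A}$, since $\C_{\A}$ and $\F_{\A}$ are closed under extensions). The maps $X\to W$ and $Y\to Z$ are cofibrations, their cokernel $\Sigma X$ being cofibrant. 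Applying $F$, using Step 1 and the fact that $F$ preserves the pushout, we obtain short exact sequences $0\to FX\to FW\to F\Sigma X\to 0$ and $0\to FY\to FZ\to F\Sigma X\to 0$ with $FZ=FW\sqcup_{FX}FY$ and $FW$ trivial — exactly a cofiber sequence $FX\xrightarrow{Fu}FY\xrightarrow{Fv}FZ\xrightarrow{Fw}F\Sigma X$ in $\B$, hence a triangle in $\Ho(\B)$. Via the natural identifications of Step 1 this triangle is isomorphic to $\mathbf{L}F(X)\to\mathbf{L}F(Y)\to\mathbf{L}F(Z)\to F\Sigma X$, and composing the last morphism with $\varphi_{X}$ from Step 2 identifies it with the image under $(\mathbf{L}F,\varphi)$ of the original triangle. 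Thus $\mathbf{L}F$ carries triangles to triangles. Since $\mathbf{L}F$ is moreover additive (clear from Step 1, as $F$ is), it is a triangulated functor.

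\emph{Main obstacle.} Everything except Step 2 is formal. The point in Step 2 is that a left Quillen functor need not preserve fibrant objects, nor even arbitrary trivial objects (only \emph{trivially cofibrant} ones), so one cannot simply read off $F\Sigma X\cong\Sigma(FX)$ from the image of the defining sequence and must re-fibrantize while keeping naturality under control. (Conceptually the statement is an instance of the principle that left Quillen functors preserve homotopy colimits and that $\Sigma X$ is the homotopy pushout of $0\leftarrow X\to 0$ in a stable model category, but we prefer to argue directly within abelian model structures.)
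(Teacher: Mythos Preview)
Your argument is correct and proceeds along the same lines as the paper's: both apply $F$ to a cofiber sequence of bifibrant objects in $\A$, observe that the image is again a cofiber sequence in $\B$ because $F$ preserves pushouts and sends trivially cofibrant objects to trivially cofibrant objects, and then pass to fibrant replacements in $\B$ to recognise a standard triangle in $\underline{\B}_{cf}$. The only real difference is organizational: you split off the construction of the suspension isomorphism $\varphi$ (Step~2) from the verification that triangles are preserved (Step~3), whereas the paper carries out both simultaneously in a single commutative cube whose top wall is built via the Horseshoe lemma \cite[Lem.~6.20]{Stoviceksurvey}. Your ``routine pushout argument'' in Step~2 is essentially that Horseshoe step, and the compatibility you need between the $\varphi_X$ of Step~2 and the identification implicit in ``hence a triangle in $\Ho(\B)$'' in Step~3 (both arise from the same conflation $0\to FX\to FW\to F\Sigma X\to 0$) is exactly what the paper's cube makes explicit; it would be worth adding one sentence to say so.
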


\begin{proof}
We want to prove that the functor $\textbf{L}F$ maps distinguished triangles in $\Ho(\A)$ to distinguished triangles in $\Ho(\B)$. Let $\Delta$ be a distinguished triangle in $\Ho(\A)$. So $\Delta$ is isomorphic in $\Ho(\A)$ to the image of a standard triangle $X\xrightarrow{u} Y\xrightarrow{v} Z\xrightarrow{w} \Sigma X$ in $\underline{\A}_{cf}$ which sits in a pushout diagram in $\A$,
\begin{displaymath}
 \xymatrix@C=2pc{
0 \ar[r] & X\ar[d]^-{u}\ar[r]^-{a}   & W\ar[r]^-{b} \ar[d]  & \Sigma X\ar@{=}[d]\ar[r] & 0 \\
0 \ar[r]  & Y \ar[r]^-{v} & Z\ar[r]^-{w} & \Sigma X \ar[r] & 0
 }
\end{displaymath}
having its rows cofibrations and $W$ a projective-injective object in the Frobenius category $\C_{\M}\cap\F_{\M}$, so $W\in\C_{\A}\cap\W_{\A}\cap\F_{\A}$. Since all the objects appearing in $\Delta$ are in particular cofibrant, the action of $\textbf{L}(F)$ on $\Delta$ produces a diagram in $\Ho(\B)$,
\begin{displaymath}
 \xymatrix@C=2pc{
\textbf{L}F(\Delta)\colon &  F(X)\ar[r]^-{F(u)} & F(Y) \ar[r]^-{F(v)} & F(W) \ar[r]^-{F(w)} & F(\Sigma X)
 }
\end{displaymath}
which we want to be a distinguished (i.e., isomorphic to a standard) triangle in $\Ho(\B)$. To this end we consider the following diagram in $\B$,
\begin{equation}
\label{eq:front_wall}
 \xymatrix@C=2pc{
0 \ar[r] & FX\ar[d]^-{Fu}\ar[r]^-{Fa}   & FW\ar[r]^-{Fb} \ar[d]  & \Sigma FX\ar@{=}[d]\ar[r] & 0 \\
0 \ar[r]  & FY \ar[r]^-{Fv} & FZ\ar[r]^-{Fw} & \Sigma  FX\ar[r] & 0.
 }
\end{equation}
Note that this is still a pushout (since $F$ is a left adjoint), its rows are cofibrations and $FW$ belongs in $C_{\B}\cap\W_{\B}$ (since $F$ is left Quillen).

Since $\B$ is a hereditary abelian model structure, a version of the Horseshoe Lemma holds \cite[Lem.~6.20]{Stoviceksurvey}, so we may paste together a fibrant replacement of $F(X)$ and a fibrant replacement of $\Sigma FX$ to obtain a fibrant replacement of $FW$, pictorially:
\begin{equation}
\label{eq:top_wall}
 \xymatrix@C=2pc{
 & J \ar[r]^-{a'}   & J' \ar[r]^-{b'}   & J'' \\
F\! X \, \ar@{>->}[ur]^-{j}_{\sim}\ar[r]_-{Fa}   & F\! W \, \ar[r]_-{Fb} \ar@{>->}[ur]^-{j'}_{\sim}  & \Sigma\! FX \, \ar@{>->}[ur]^-{j''}_{\sim} &
 }
\end{equation}
where $j,j'$ and $j''$ are monomorphisms with cokernel in $\W_{\B}\cap\C_{\B}$. We observe that  by construction the middle morphism $j'$ has its target $J'$ in the class of projective-injective objects $\C_{\B}\cap\F_{\B}\cap\W_{\B}$. Moreover, we consider a fibrant replacement $h\colon F(Y)\rightarrow H$ of $F(Y)$, i.e., $h$ is a monomorphism with cokernel in $\W_{\B}\cap\C_{\B}$. We construct the following commutative diagram:

    \begin{equation}
    \label{eq:big_diagram}
    \begin{gathered}
      \xymatrix@!=0.5pc{ {} & J \, \ar'[d][dd]_-{\lambda}
          \ar@{>->}[rr]^-{a'} & & J' \ar'[d][dd]_-{} 
          \ar@{->>}[rr]^-{b'} 
          & & J'' \ar@{=}[dd]
          \\
          F\!X \,
          \ar@{>->}[ur]^(0.45){j}
          \ar[dd]_-{F\!u}
          \ar@{>->}[rr]^(0.70){F\!a} & & 
          F\!W
          \ar@{>->}[ur]^(0.45){j'}
          \ar[dd]_(0.70){F\!c}
          \ar@{->>}[rr]^(0.70){F\!b} & & \Sigma\! FX
          \ar@{>->}[ur]^(0.45){j''}
          \ar@{=}[dd]
          \\
          {} & H \, \ar@{>->}'[r]^-{a''}[rr] & & 
          H' \ar@{->>}'[r]^-{b''}[rr] 
          & & J''
          \\
          F\!Y \,
          \ar@{>->}[ur]_(0.6){h}
          \ar@{>->}[rr]_-{F\!v} & & F\!Z
          \ar@{>..>}[ur]_(0.6){h'}
          \ar@{->>}[rr]_-{F\!w} & & \Sigma\! FX
          \ar@{>->}[ur]_(0.6){j''}
        }
    \end{gathered}
    \end{equation}
    
Here the top wall is just diagram (\ref{eq:top_wall}), while the front wall is diagram (\ref{eq:front_wall}). The morphism $\lambda$ is obtained since $j$ is a monomorphism with cokernel in $\W_{\B}\cap\C_{\B}$ and $H\in\F_{\B}=\rightperp{(\W_{\B}\cap\C_{\B})}$. The back wall is defined as the pushout of $H\longleftarrow J \longrightarrow J'$. It now follows from a diagram chase that there exists a dotted map $h'$ which makes the whole diagram commutative. 
Also, the snake lemma applied to the bottom wall shows that $h'$ is a monomorphism with cokernel in the class $\W_{\B}$. 

Part of diagram (\ref{eq:big_diagram}) is the commutative diagram:
\begin{equation}
\label{eq:iso_triangles}
 \xymatrix@C=2pc{
FX\ar[d]_-{j}\ar[r]^-{Fu}   & FY\ar[r]^-{Fv} \ar[d]^-{h}  & FZ\ar[d]^-{h'} \ar[r]^-{Fw} & \Sigma FX\ar[d]^-{\Sigma j} \\
J \ar[r]^-{\lambda} & H\ar[r]^-{a''} & H'\ar[r]^-{b''} & \Sigma J
 }
\end{equation}
where by construction the maps $j, h$ and $h'$ are isomorphisms in $\Ho(\B)$. Thus we have proved that the diagram $\textbf{L}F(\Delta)$ is  isomorphic in $\Ho(\B)$ to the bottom row of (\ref{eq:iso_triangles}), which is a standard triangle since it sits at the back wall of diagram (\ref{eq:big_diagram}).
\end{proof}

\begin{corollary}
\label{cor:triang_Quillen_adj}
A Quillen adjunction $F\colon \A\rightleftarrows\B\colon G$ between hereditary abelian model structures induces an adjunction of triangulated functors $\textbf{L}F\colon\Ho(A)\leftrightarrows\Ho(\B)\colon\textbf{R}G$.
\end{corollary}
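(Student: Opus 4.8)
The plan is to assemble the statement from three ingredients: the classical descent of a Quillen adjunction to the homotopy categories, Proposition~\ref{prop:left_Quillen_triang}, and a dualization argument. Since $\A$ and $\B$ are hereditary abelian model structures, $\Ho(\A)$ and $\Ho(\B)$ carry the triangulated structures recalled in this section, so the statement makes sense. First I would invoke the general theory of Quillen functors, e.g.\ \cite[Lem.~1.3.10]{hoveybook}: the Quillen adjunction $(F,G)$ induces an adjoint pair $(\textbf{L}F,\textbf{R}G)$ between $\Ho(\A)$ and $\Ho(\B)$, the adjunction isomorphism being the derived version of the original one. By Proposition~\ref{prop:left_Quillen_triang}, $\textbf{L}F$ is a triangulated functor.

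It remains to check that $\textbf{R}G$ is triangulated and that the adjunction respects the triangulated structures. For the former I would pass to opposite categories: if $(\C_{\B},\W_{\B},\F_{\B})$ is a hereditary Hovey triple on $\B$, then $(\F_{\B},\W_{\B},\C_{\B})$ is one on $\B^{\mathrm{op}}$ — the defining cotorsion pairs become the opposite cotorsion pairs, $\W_{\B}$ remains thick, and heredity is self-dual because $\Ext^{i}$ is computed with its two arguments interchanged. There is an equivalence $\Ho(\B^{\mathrm{op}})\simeq\Ho(\B)^{\mathrm{op}}$ of triangulated categories (the translation on the right being $\Sigma^{-1}$), and likewise for $\A$. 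The right Quillen functor $G$ becomes a left Quillen functor $G^{\mathrm{op}}\colon\B^{\mathrm{op}}\to\A^{\mathrm{op}}$ whose total left derived functor is $(\textbf{R}G)^{\mathrm{op}}$; applying Proposition~\ref{prop:left_Quillen_triang} to $G^{\mathrm{op}}$ shows $(\textbf{R}G)^{\mathrm{op}}$, hence $\textbf{R}G$, is triangulated. Finally, once compatible structure isomorphisms $\textbf{L}F\circ\Sigma\xrightarrow{\sim}\Sigma\circ\textbf{L}F$ and $\textbf{R}G\circ\Sigma\xrightarrow{\sim}\Sigma\circ\textbf{R}G$ are fixed, the adjunction isomorphism automatically commutes with the translations, since the two structure maps are mates of each other under the adjunction; this is a formal $2$-categorical fact. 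Hence $(\textbf{L}F,\textbf{R}G)$ is an adjunction of triangulated functors.

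The point requiring the most care is the bookkeeping in the dualization: verifying that the opposite of a hereditary abelian model structure is again one, and that the equivalence $\Ho(\M^{\mathrm{op}})\simeq\Ho(\M)^{\mathrm{op}}$ matches the triangulated structure of this section with the opposite one (equivalently, that cofiber sequences in $\M^{\mathrm{op}}$ are exactly the ``fiber sequences'' in $\M$); everything else is standard model-category theory or formal nonsense about adjunctions. Alternatively, one can bypass the dualization and invoke the general principle that a right adjoint of a triangulated functor between triangulated categories is itself triangulated, with connecting isomorphism the mate of that of the left adjoint — this yields at once that $\textbf{R}G$ is triangulated and that the adjunction intertwines the two triangulations.
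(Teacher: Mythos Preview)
Your proposal is correct. In fact, the alternative you mention in your final paragraph is exactly the route the paper takes: after invoking Proposition~\ref{prop:left_Quillen_triang} to see that $\mathbf{L}F$ is triangulated, the paper simply cites \cite[Lemma~5.3.6]{Nee} (a right adjoint of a triangulated functor is automatically triangulated) to conclude that $\mathbf{R}G$ is triangulated as well. Your main route via dualization to $\A^{\mathrm{op}},\B^{\mathrm{op}}$ is also valid and has the merit of staying entirely within the framework developed in this section, at the cost of the bookkeeping you flag (checking that the opposite of a hereditary Hovey triple is again one, and that the identification $\Ho(\M^{\mathrm{op}})\simeq\Ho(\M)^{\mathrm{op}}$ respects the triangulations); the paper's one-line appeal to Neeman is quicker but imports an external fact. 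Either way the remaining point, that the derived adjunction is compatible with the triangulated structures, is the formal mate argument you indicate.
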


\begin{proof}
It is standard that the induced adjunction $\textbf{L}F\colon\Ho(A)\leftrightarrows\Ho(\B)\colon\textbf{R}G$ exists. From Proposition \ref{prop:left_Quillen_triang} we obtain that $\textbf{L}F$ is a triangulated functor. Now \cite[Lemma~5.3.6]{Nee} implies that its right adjoint $\textbf{R}G$ is also triangulated.
\end{proof}

We also record the following conceptual observation. For unexplained terminology in what follows we refer to \cite[Chapter~1, 1.4]{hoveybook}. We recall that there exists a 2-category $\mathit{Model_{L}}$ of model categories, left Quillen functors, and natural transformations, and dually, there exists a 2-category $\mathit{Model_{R}}$ of model categories, right Quillen functors, and natural transformations. We also denote by $\mathit{Cat}$ the 2-category of categories. From the discussion in \textit{loc.cit} (see also \cite[Thm.~3.2]{Shulman}) it follows that there exist pseudofunctors
\[\textbf{L}\colon\mathit{Model_{L}}\rightarrow \mathit{Cat} \,\,\,\,\,\mbox{and}\,\,\,\,\,\, \textbf{R}\colon \mathit{Model_{R}}\rightarrow \mathit{Cat}\]
which map a model category to its homotopy category and a left (resp., right) Quillen functor to its left (resp., right) derived functor. 

We have the following consequence of Proposition \ref{prop:left_Quillen_triang}.

\begin{corollary}
The pseudofunctor $\textbf{L}$ restricts to a pseudofunctor $\textbf{L}\colon \mathit{hAbModel_{L}}\rightarrow \mathit{Triang}$ between the 2-category $\mathit{hAbModel}$ of hereditary abelian model structures, left Quillen functors, and natural transformations and the 2-category $\mathit{Triang}$ of triangulated categories, triangulated functors, and natural transformations. A similar statement holds for the pseudofunctor $\textbf{R}$.
\end{corollary}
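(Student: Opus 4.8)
The plan is to observe that this corollary is essentially a formal consequence of Proposition~\ref{prop:left_Quillen_triang} together with the fact that pseudofunctors restrict along (non-full) sub-2-categories. First I would check that $\mathit{hAbModel_{L}}$ is indeed a sub-2-category of $\mathit{Model_{L}}$: its objects are hereditary abelian model structures (a subclass of all model categories), its $1$-cells are left Quillen functors between such, and its $2$-cells are all natural transformations between these; since identities of hereditary abelian model structures are left Quillen and the composite of left Quillen functors is left Quillen, this is closed under the relevant compositions, hence a genuine sub-2-category with the inclusion $\mathit{hAbModel_{L}}\hookrightarrow\mathit{Model_{L}}$ being a strict $2$-functor. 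Likewise $\mathit{Triang}$ is a sub-2-category of $\mathit{Cat}$: triangulated functors contain the identities and are closed under composition, and all natural transformations are allowed.

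Next I would argue that the restriction of $\textbf{L}$ along the inclusion $\mathit{hAbModel_{L}}\hookrightarrow\mathit{Model_{L}}$ lands in $\mathit{Triang}$. On objects this is the statement that the homotopy category of a hereditary abelian model structure is triangulated, which is recorded in the discussion of the homotopy-category subsection (the stable category of the Frobenius category $\M_{cf}$). On $1$-cells, Proposition~\ref{prop:left_Quillen_triang} says precisely that $\textbf{L}F$ is a triangulated functor whenever $F$ is left Quillen between hereditary abelian model structures. On $2$-cells there is nothing to check, since every natural transformation is a $2$-cell of $\mathit{Triang}$; and the coherence isomorphisms of $\textbf{L}$ (the comparison $2$-cells $\textbf{L}(G\circ F)\cong \textbf{L}G\circ\textbf{L}F$ and $\textbf{L}(\mathrm{id})\cong\mathrm{id}$) are already natural isomorphisms in $\mathit{Cat}$, hence automatically $2$-cells of $\mathit{Triang}$, so they satisfy the pseudofunctor coherence axioms in the restricted setting by the same equations they satisfied in $\mathit{Cat}$. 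This yields the asserted pseudofunctor $\textbf{L}\colon\mathit{hAbModel_{L}}\rightarrow\mathit{Triang}$, compatible with $\textbf{L}\colon\mathit{Model_{L}}\rightarrow\mathit{Cat}$ via the two inclusions.

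For the dual statement about $\textbf{R}$, I would dualize: $\mathit{hAbModel_{R}}$ is a sub-2-category of $\mathit{Model_{R}}$, and the content to supply is that $\textbf{R}G$ is triangulated for a right Quillen functor $G$ between hereditary abelian model structures. This follows from Proposition~\ref{prop:left_Quillen_triang} exactly as in the proof of Corollary~\ref{cor:triang_Quillen_adj}: a right Quillen $G$ has a left Quillen adjoint $F$, so $\textbf{L}F$ is triangulated by the Proposition, and then \cite[Lemma~5.3.6]{Nee} gives that its right adjoint $\textbf{R}G$ is triangulated as well; alternatively one invokes the opposite-category duality. The only mild subtlety — and the step I expect to require the most care in writing up — is the verification that the pseudofunctor structure genuinely restricts, i.e.\ that the coherence cells live in the smaller target; but this is immediate once one notes that $\mathit{Triang}\hookrightarrow\mathit{Cat}$ and $\mathit{hAbModel_{L}}\hookrightarrow\mathit{Model_{L}}$ are faithful on $2$-cells and the restriction is, by construction, the unique pseudofunctor making the evident square of $2$-functors commute.
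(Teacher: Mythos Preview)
Your proposal is correct and matches the paper's approach: the paper states this corollary without proof, simply as ``the following consequence of Proposition~\ref{prop:left_Quillen_triang}'', and you have faithfully unpacked that remark by checking that the restriction lands in $\mathit{Triang}$ on objects, 1-cells (via Proposition~\ref{prop:left_Quillen_triang}), and 2-cells, with the dual handled exactly as in Corollary~\ref{cor:triang_Quillen_adj}.
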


\section{Quillen adjoint triples}
\label{sec:Quillen_adj_triples}

We introduce the concept of a Quillen adjoint triple, which is implicit in work of Shulman \cite{Shulman}\footnote{In addition, in the work \cite{Jordan} Quillen adjoint triples are used in a more strict setup.}.
The discussion applies to general model categories and we do not need to restrict to the abelian case. In Section \ref{sec:a_double_psadofunctor} we explain how Quillen adjoint triples are related to the more abstract world of \cite{Shulman}. Also, in subsection \ref{subsec:fully_faith} below we study when certain derived functors are fully faithful.

In what follows, if a functor $f$ is left adjoint to a functor $f'$, we write $f\dashv f'$.

\begin{definition}
\label{def:Quillen_adjoint_triple}
Assume that we are given two (bicomplete) categories $\A$ and $\B$ such that
$\A$ admits two model structures $\A_1, \A_2$ and a Quillen equivalence $f\colon \A_1 \rightleftarrows \A_2\colon f'$ (with $f\dashv f'$), and similarly, 
$\B$ admits two model structures $\B_1, \B_2$ and a Quillen equivalence $g\colon \B_1 \rightleftarrows \B_2\colon g'$ (with $g\dashv g'$).
A \textbf{Quillen adjoint triple} is defined as the data of two Quillen adjunctions
\begin{equation}
  \xymatrix@C=1pc{
  \A_1 \ar@/_1.2pc/[rr]_-{i} & \bot & \B_{1} \ar@/_1.2pc/[ll]_-{q}
  } 
  \\ \,\,\,\,\,\,\,\,\,\,\,\mbox{and}\,\,\,\,\,\,\,\,\,\,\,
  \xymatrix@C=1pc{
  \A_2 \ar@/^1.2pc/[rr]^-{i'} & \bot & \B_{2} \ar@/^1.2pc/[ll]^-{p}
  } 
  \nonumber
  \end{equation}
and a natural transformation $\alpha\colon g\circ i\Rightarrow i'\circ f,$
\begin{equation}
\label{eq:alpha_in_def_of_Quillen_adj}
  \xymatrix@C=1pc{
\A_1 \ar[d]_-{f}  \ar[r] ^-{i} \drtwocell\omit{\alpha}   &  \B_{1} \ar[d] ^-{g} \\
\A_{2} \ar[r]_-{i'} &  \B_{2},
  }\!
\end{equation}
such that there exists a natural isomorphism $\mathbf{ho}(\alpha)\colon\mathbf{L}g\circ\mathbf{R}i\Rightarrow\mathbf{L}i'\circ\mathbf{L}f,$
\begin{equation}
  \xymatrix@C=3pc{
\Ho(\A_1) \ar[d]_-{\textbf{L}f}  \ar[r] ^-{\textbf{R}i} \drtwocell\omit{\,\,\,\,\,\,\,\,\,\,\,\,\mathbf{ho}(\alpha)}   &  \Ho(\B_{1}) \ar[d] ^-{\mathbf{L}g} \\
  \Ho(\A_{2}) \ar[r]_-{\textbf{L}i'} &  \Ho(\B_{2}),  
  }\!
  \nonumber
\end{equation}
represented by the composite of the following zigzag in $\Ho(\B_2)$, where $R$ and $Q$ denote fibrant and cofibrant replacement functors respectively and it is clear from context in which model we apply them,

\begin{equation}
\label{eq:composite_map_triples}
\xymatrix@C=3pc{
gQ(iR) & 
gQiQR  \ar[l]_-{\sim} \ar[r]^-{gq^{\B_1}_{i QR}} &
(g\circ i)QR \ar[d]^-{\alpha_{QR}} & &   \\
& & 
(i'\circ f) QR &
 (i'\circ f)Q. \ar[l]_-{\sim}
}\!
\end{equation}
We denote a Quillen adjoint triple by $\mathrm{Q_{tr}}(\A_{1/2},\B_{1/2},\alpha)$.

We explain in more detail the morphisms in diagram (\ref{eq:composite_map_triples}). Fix an object $X$ in $\A_1$. The vertical map $\alpha_{QRX}$ is just the natural transformation $\alpha$ applied on a fibrant-cofibrant replacement of $X$. The upper right map occurs after applying the right Quillen functor $g$ on the cofibrant replacement map $q^{\B_1}_{i QRX}$. 
The upper left map occurs as follows:  We first consider the following commutative square in $\B_{1}$, which is natural in $X$, 
\begin{equation}
\label{eq:explanation_in_def}
  \xymatrix@C=1pc{
iRX     & &  iQRX \ar[ll]_-{iq^{\A_1}_{RX}}^-{\sim} \\
QiRX \ar[u]^-{\sim} &  & QiQRX \ar[u]^-{\sim}   \ar[ll]^-{Q\, iq^{\A_1}_{RX}}_-{\sim} 
  }\!
\end{equation}
where the map on top is a trivial fibration, since the right Quillen $i$ maps the trivial fibration $q^{\A_1}_{RX}$ to a trivial fibration, and the bottom map is a cofibrant replacement of the top map. We then apply the left Quillen functor $g$ on the bottom map of (\ref{eq:explanation_in_def}), which by \cite[Lemma~1.1.12]{hoveybook} gives a weak equivalence; this is the upper left map in (\ref{eq:composite_map_triples}). For the bottom right map in (\ref{eq:composite_map_triples}), we first consider the trivial cofibration $r_{X}^{\A_1}\colon X\rightarrow RX$ and then take its cofibrant replacement, $Qr_{X}^{\A_1}$, which again by \textit{loc.cit} is mapped to a weak equivalence by the left Quillen functor $i'\circ f$.
\end{definition}

\begin{remark}
\label{rem:fibrant-cofibrant objects}
For any fibrant-cofibrant object $X$ in $\A_1$, the composite of zig-zags (\ref{eq:composite_map_triples}) in Definition \ref{def:Quillen_adjoint_triple} reduces to the composite
\begin{equation}
\label{eq:nat_map_cf}
  \xymatrix@C=2pc{
gQ_{\B_{1}}(iX) \ar[r]^-{gq^{\B_{1}}_{iX}} & (g\circ i)X \ar[r]^-{\alpha_{X}} & (i'\circ f)X.
}\!
\end{equation}
Indeed, we observe that for $X$ fibrant-cofibrant 
the top map in diagram (\ref{eq:explanation_in_def}) is the identity map and the bottom map is $Q(\mathrm{id}_{iX})=\mathrm{id}_{QiX}$; hence the upper left map in (\ref{eq:composite_map_triples}) is the identity map $\mathrm{id}_{gQiX}$. Similarly, one can see that the bottom right map in (\ref{eq:composite_map_triples}) is $\mathrm{id}_{(i'\circ f)X}$. Now, it is not hard to prove that the composite of zig-zags from  (\ref{eq:composite_map_triples}) is an isomorphism in $\Ho(\B_2)$ if and only if the composite in (\ref{eq:nat_map_cf}) is, but we will deduce this directly from work of Shulman, see~Corollary~\ref{cor:Shulman}.
\end{remark}

\begin{remark}
\label{rem:easy_case}
In the context of Definition \ref{def:Quillen_adjoint_triple}, in case the functors $f,f',g,g'$ are identities, $i=i', \alpha=\mathrm{id}$ and $\B_1$ shares the same weak equivalences with $\B_2$, the zig-zag of maps (\ref{eq:composite_map_triples}) in Definition \ref{def:Quillen_adjoint_triple} is always a natural isomorphism in $\Ho(\B_2)$. Indeed, in this case the right hand side morphism in the composite (\ref{eq:nat_map_cf}) in Remark \ref{rem:fibrant-cofibrant objects} is the identity, while the left hand side morphism is a weak equivalence in $\B_2$ (as the image of the weak equivalence $q_X^{\B_1}$ under $g=\mathrm{id}\colon \B_1\rightarrow\B_2$). This ``trivial'' special case is typical in applications.
\end{remark}

\begin{remark}
\label{rem:mates}
In the context of Definition \ref{def:Quillen_adjoint_triple}, the natural transformation $\alpha\colon g\circ i \Rightarrow i'\circ f$ has a \textit{mate}, $\tilde{\alpha}\colon i\circ f'\Rightarrow g'\circ i'$,
\begin{equation}
  \xymatrix@C=1pc{
\A_1   \ar[r] ^-{i}   &  \B_{1}  \\
\A_{2} \ar[r]_-{i'} \ar[u]^-{f'}  \urtwocell\omit{\tilde{\alpha}}  &  \B_{2},  \ar[u]_-{g'}
  }\!
  \nonumber
\end{equation}
which is the natural transformation defined by the composite
\begin{equation}
 \xymatrix@C=2pc{
 i\circ f' \ar[r]^-{\eta if'} & g'\circ g\circ i\circ f' \ar[r]^-{g'\alpha f'} & g'\circ i'\circ f\circ f' \ar[r]^-{g'i'\epsilon} & g'\circ i',
}
 \end{equation}
where $\eta$ denotes the unit of $g\dashv g'$ and $\epsilon$ the counit of $f\dashv f'$. In complete analogy, the natural transformation $\mathbf{ho}(\alpha)$ also has a mate 
\begin{equation}
  \xymatrix@C=3pc{
\Ho(\A_1)  \ar[r] ^-{\textbf{R}i}    &  \Ho(\B_{1})   \\
  \Ho(\A_{2}) \ar[u]^-{\mathbf{R}f'}  \ar[r]_-{\textbf{L}i'} \urtwocell\omit{\,\,\,\,\,\,\,\,\,\,\,\,\widetilde{\mathbf{ho}(\alpha)}} &  \Ho(\B_{2}).  \ar[u]_-{\textbf{R}g'}
  }\!
  \nonumber
\end{equation}
Since $\mathbf{L}f\dashv \mathbf{R}f'$ and $\mathbf{L}g\dashv \mathbf{R}g'$ are adjoints equivalences, a well known fact implies that $\mathbf{ho}(\alpha)$ is an isomorphism if and only if its mate $\widetilde{\mathbf{ho}(\alpha)}$ is an isomorphism, see for instance \cite[Lemma 2.2]{Shulman}.
\end{remark}

\begin{remark}(\textnormal{Adjoint triples of homotopy categories})
\label{rem:induced_adj_triple}
Given a Quillen adjoint triple $\mathrm{Q_{tr}}(\A_{1/2},\B_{1/2},\alpha)$, there is a natural isomorphism $\mathbf{ho}(\alpha)\colon\mathbf{L}g\circ\mathbf{R}i\Rightarrow\mathbf{L}i'\circ\mathbf{L}f$ where the functors $\mathbf{L}f$ and $\mathbf{L}g$ are equivalences of categories. 
In more detail, we have the following diagram (natural isomorphism $\mathbf{ho}(\alpha)\colon\mathbf{L}g\circ \mathbf{R}i\Rightarrow \mathbf{L}i'\circ\mathbf{L}f$)
\[ 
 \xymatrix@C=3pc{
\Ho(\A_{1}) \ar[d]_{\textbf{L}f}^{\cong}   \ar@<0.0ex>[rr] ^-{ \textbf{R}i }   &&
  \Ho(\B_{1}) \ar[d]^{\textbf{L}g}_{\cong} \ar@/_1.5pc/[ll] _-{\textbf{L}q}  \\
\Ho(\A_{2}) \ar@<0.0ex>[rr] ^-{ \textbf{L}i' }   && \Ho(\B_{2})  \ar@/^1.5pc/[ll] ^-{\textbf{R}p}    
  }
\]
where the functors $\textbf{L}q\dashv \textbf{R}i$ and $\textbf{L}i'\dashv \textbf{R}p$ form adjoint pairs. In particular, we obtain an adjoint triple of homotopy categories
\begin{equation}
\label{eq:adjoint_triple_ab2}
  \xymatrix@C=4pc{
\Ho(\A_{2})   \ar@<0.0ex>[rr] ^-{\textbf{L}i'\, \cong\, \textbf{L}g  \textbf{R}i (\textbf{L}f)^{-1}}   &&
  \Ho(\B_{2}) \ar@/^1.5pc/[ll] ^-{\textbf{R}p}    \ar@/_1.5pc/[ll] _-{\textbf{L}f\textbf{L}q (\textbf{L}g)^{-1}} 
  }\! 
\end{equation}
as well as an adjoint triple of homotopy categories,
\begin{equation}
\label{eq:adjoint_triple_mixed2}
  \xymatrix@C=4pc{
\Ho(\A_{1})   \ar@<0.0ex>[rr] ^-{\textbf{R}i \, \cong\, (\textbf{L}g)^{-1}  \textbf{L}i' \textbf{L}f }   &&
  \Ho(\B_{1}). \ar@/^1.5pc/[ll] ^-{(\textbf{L}f)^{-1}\textbf{R}p \textbf{L}g}    \ar@/_1.5pc/[ll] _-{\textbf{L}q } 
  }\! 
\end{equation}
In case the model categories involved are hereditary abelian, from Corollary \ref{cor:triang_Quillen_adj} we deduce that these are adjoint triples of triangulated categories and triangulated functors. 
\end{remark}

We specialize the situation in the next useful statement. 

\begin{proposition} 
\label{prop:special_triple}
Let $\mathrm{Q_{tr}}(\A_{1/2},\B_{1/2},\alpha)$ be a Quillen adjoint triple, where $f,f',g,g'$ are identity maps, $i=i'$, $\alpha=\mathrm{id}$ and $\A_1$ (resp., $\B_1$) shares the same weak equivalences with $\A_2$ (resp., $\B_2$). Then there exists an adjoint triple of homotopy categories 
\begin{equation}
\label{eq:proinj_triple}
  \xymatrix@C=4pc{
\Ho(\A_{1/2})   \ar@<0.0ex>[rr] ^-{\textbf{R}i\, \cong\,  \textbf{L}i }   &&
  \Ho(\B_{1/2}), \ar@/^1.5pc/[ll] ^-{\textbf{R}p}    \ar@/_1.5pc/[ll] _-{\textbf{L}q} 
  }\! 
\end{equation}
where $\Ho(\A_{1/2})$  denotes the common homotopy category for $\A_1$ and $\A_2$, and similarly, $\Ho(\B_{1/2})$  denotes the common homotopy category for $\B_1$ and $\B_2$.
\end{proposition}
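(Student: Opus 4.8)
The plan is to assemble the triple out of the two Quillen adjunctions that form part of the data $\mathrm{Q_{tr}}(\A_{1/2},\B_{1/2},\alpha)$, using the degeneracy hypotheses to collapse all the homotopy categories and derived functors in sight.

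First I would note that, since $\A_1$ and $\A_2$ have the same weak equivalences, their localisations coincide; write $\Ho(\A_{1/2})$ for this common category, and likewise let $\Ho(\B_{1/2})$ be the common localisation of $\B_1$ and $\B_2$. Under these identifications the total derived functors of the identity Quillen equivalences are canonically isomorphic to the identity: for instance, for $f=\mathrm{id}\colon\A_1\to\A_2$ one has $\mathbf{L}f(X)=Q_{\A_1}X$, which is joined to $X$ by a weak equivalence of $\A_1$, hence of $\A_2$, and so is isomorphic to $X$ in $\Ho(\A_{1/2})$; the same applies to $\mathbf{R}f'$, $\mathbf{L}g$ and $\mathbf{R}g'$. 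In particular $\mathbf{R}i\colon\Ho(\A_1)\to\Ho(\B_1)$ and $\mathbf{L}i\colon\Ho(\A_2)\to\Ho(\B_2)$ may both be regarded as functors $\Ho(\A_{1/2})\to\Ho(\B_{1/2})$.

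Next, the standard passage from a Quillen adjunction to its derived adjunction (cf.\ \cite[Lem.~1.3.10]{hoveybook}) applied to $q\colon\B_1\rightleftarrows\A_1\colon i$ produces an adjoint pair $\mathbf{L}q\colon\Ho(\B_{1/2})\rightleftarrows\Ho(\A_{1/2})\colon\mathbf{R}i$, and applied to $i\colon\A_2\rightleftarrows\B_2\colon p$ produces an adjoint pair $\mathbf{L}i\colon\Ho(\A_{1/2})\rightleftarrows\Ho(\B_{1/2})\colon\mathbf{R}p$. To splice these into a single triple I would invoke the comparison isomorphism built into the definition of a Quillen adjoint triple: the natural isomorphism $\mathbf{ho}(\alpha)\colon\mathbf{L}g\circ\mathbf{R}i\Rightarrow\mathbf{L}i'\circ\mathbf{L}f$ reads, after the identifications of the previous paragraph and since $i'=i$, as a natural isomorphism $\mathbf{R}i\cong\mathbf{L}i$ of functors $\Ho(\A_{1/2})\to\Ho(\B_{1/2})$. (By Remark~\ref{rem:easy_case} this isomorphism is automatic in the present degenerate situation, so in effect nothing beyond the two Quillen adjunctions is used.) Transporting the adjunction $\mathbf{L}i\dashv\mathbf{R}p$ along $\mathbf{R}i\cong\mathbf{L}i$ gives $\mathbf{R}i\dashv\mathbf{R}p$, and together with $\mathbf{L}q\dashv\mathbf{R}i$ this is precisely the adjoint triple (\ref{eq:proinj_triple}).

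There is no genuine obstacle here; the only point requiring care is organisational, namely keeping track of which of the two model structures each cofibrant or fibrant replacement is computed in, so that the identifications $\Ho(\A_1)=\Ho(\A_2)$, $\mathbf{L}f\cong\mathrm{id}$, and the re-reading of $\mathbf{ho}(\alpha)$ are all carried out compatibly.
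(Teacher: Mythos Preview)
Your proof is correct and follows the same overall shape as the paper's: establish the derived adjunctions $\mathbf{L}q\dashv\mathbf{R}i$ and $\mathbf{L}i\dashv\mathbf{R}p$, then show $\mathbf{R}i\cong\mathbf{L}i$ to splice them into a triple. The only difference lies in how that isomorphism is obtained. You invoke the hypothesis directly, reading off $\mathbf{R}i\cong\mathbf{L}i$ from the natural isomorphism $\mathbf{ho}(\alpha)$ after collapsing $\mathbf{L}f$ and $\mathbf{L}g$ to identities. The paper instead constructs the isomorphism by hand: for each $X$ it applies $i$ to the composite $Q_{\A_2}X\xrightarrow{q_X^{\A_2}}X\xrightarrow{r_X^{\A_1}}R_{\A_1}X$ and checks, using that $i$ is right Quillen $\A_2\to\B_1$ and left Quillen $\A_1\to\B_2$, that both resulting maps are weak equivalences in $\B_{1/2}$. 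Your route is cleaner and uses the packaged hypothesis; the paper's is more self-contained and in effect reproves, in this special case, that the data automatically satisfies the Quillen adjoint triple condition (cf.\ Remark~\ref{rem:easy_case}, which you also cite).
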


\begin{proof}
Since there are adjoint pairs $\mathbf{L}q\dashv\mathbf{R}i$ and $\mathbf{L}i\dashv \mathbf{R}p$, it suffices to prove that we have a natural isomorphism $\textbf{L}i\, \cong\,  \textbf{R}i$. 
To that end, consider an object $X$ in $\A$ and the following composite, which is natural in $X$,
\begin{equation}
  \xymatrix@C=2pc{
  Q_{\A_2}X \ar[r]^-{q^{\A_2}_{X}} & X \ar[r]^-{r^{\A_1}_{X}} & R_{\A_{1}}X 
  }
  \nonumber
\end{equation}  
where $q^{\A_2}_{X}$ is a cofibrant replacement of $X$ in $\A_2$ and $r^{\A_1}_{X}$ is a fibrant replacement of $X$ in $\A_1$. We apply the functor $i$ to obtain 
\begin{equation}
  \xymatrix@C=2pc{
\mathbf{L}i(X)= i(Q_{\A_2}X) \ar[r]^-{iq^{\A_2}_{X}} & i(X) \ar[r]^-{ir^{\A_1}_{X}} & i(R_{\A_{1}}X)=\mathbf{R}i(X)
  }
  \nonumber
\end{equation}  
Since $i\circ\mathrm{id}\colon \A_1\rightarrow\B_2$ is a left Quillen functor, the map $ir^{\A_1}_{X}$ is a trivial cofibration in $\B_2$, and since $i\circ \mathrm{id}\colon \A_2\rightarrow \B_1$ is a right Quillen functor, the map $iq^{\A_2}_{X}$ is a trivial fibration in $\B_1$. In particular, the maps $ir^{\A_1}_{X}$ and $iq^{\A_2}_{X}$ are weak equivalences in $\B_2$ and $\B_1$ respectively.  Since $\B_2$ and $\B_1$ share the same weak equivalences the result follows.
\end{proof}

\subsection{Fully faithful derived functors}
\label{subsec:fully_faith}
We gather a few results characterizing when certain derived functors are fully faithful. 
\begin{proposition} 
\label{prop:fully_faithful_right_derived}
Let $\A$ and $\B$ be two hereditary abelian model structures with Hovey triples $(\C_{\A},\W_{\A},\F_{\A})$ and $(\C_{\B},\W_{\B},\F_{\B})$ respectively. Let $l\colon\B\leftrightarrows\A\colon r$ be a Quillen adjunction, where $r$ is the right adjoint. The following are equivalent:
\begin{itemize}
\item[(i)] The total right derived functor $\textbf{R}r$ is fully faithful.
\item[(ii)] For all $X,Y\in \A$ and $n\geqslant 1$,\, $\Ext_{\A}^n(QX, RY)\cong \Ext_{\B}^n(QrRX, rRY)$. 
\end{itemize}
In case $r$ is fully faithful, the above statements are also equivalent to:
\begin{itemize}
\item[(iii)] For all $X$ fibrant in $\A$, the weak equivalence $q^{\B}_{rX}\colon Q_{\B}(rX)\rightarrow rX$ is mapped to a weak equivalence under $l$.
\end{itemize}
\end{proposition}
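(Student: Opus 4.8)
The plan is to establish the chain (i) $\Leftrightarrow$ (ii) for a general Quillen adjunction, and then, under the extra hypothesis that $r$ is fully faithful, add the equivalence with (iii).

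For (i) $\Leftrightarrow$ (ii): recall that $\mathbf{R}r$ is computed on objects by $\mathbf{R}r(X) = r(R_{\A}X)$, while $\mathbf{L}l$ is computed by $\mathbf{L}l(Y) = l(Q_{\B}Y)$, and $(\mathbf{L}l, \mathbf{R}r)$ is an adjunction of triangulated functors by Corollary~\ref{cor:triang_Quillen_adj}. A standard criterion says that a right adjoint $\mathbf{R}r$ in an adjunction is fully faithful if and only if the counit $\mathbf{L}l\,\mathbf{R}r \Rightarrow \mathrm{id}$ is a natural isomorphism. In the homotopy categories of hereditary abelian model structures, Hom-sets are computed as $\Hom_{\Ho(\A)}(X,Y) \cong \underline{\Hom}_{\A_{cf}}(QRX, QRY)$ and, more to the point, shifted Hom-sets recover Ext-groups: for $X$ cofibrant and $Y$ fibrant one has $\Hom_{\Ho(\A)}(X, \Sigma^n Y) \cong \Ext^n_{\A}(X,Y)$ for $n \geqslant 1$ (this is the standard identification of the triangulated structure on $\Ho(\A) \simeq \underline{\A}_{cf}$ with Yoneda Ext, using heredity so that Ext of cofibrant-into-fibrant is computed correctly). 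Thus full faithfulness of $\mathbf{R}r$ — being a statement about an exact functor between triangulated categories, hence detectable on all shifts — translates into the isomorphism $\Ext^n_{\A}(QX, RY) \cong \Hom_{\Ho(\A)}(X, \Sigma^n Y) \cong \Hom_{\Ho(\B)}(\mathbf{R}r\,X', \Sigma^n \mathbf{R}r\,Y')$ for the appropriate representatives, and the right-hand side unwinds, via the adjunction $(\mathbf{L}l,\mathbf{R}r)$ and the object-level formulas, to $\Ext^n_{\B}(Q r R X, r R Y)$. I would spell out the bookkeeping of which replacement ($Q$ vs.\ $R$, in $\A$ vs.\ $\B$) is needed at each spot; heredity is what guarantees that inserting or deleting a fibrant/cofibrant replacement does not change the Ext-group.

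For the equivalence with (iii) under the assumption that $r$ is fully faithful: here the point is to pass from the derived-level statement to a pointwise model-level statement. If $r$ is fully faithful on $\A$, then for $X$ fibrant in $\A$ the object $rX$ already "sees" $X$, and the counit of $(\mathbf{L}l,\mathbf{R}r)$ evaluated at $X$ is represented in $\Ho(\A)$ by $l Q_{\B}(rX) \to r(rX)$-type maps; chasing the definitions, $\mathbf{R}r$ is fully faithful iff this counit is an isomorphism in $\Ho(\A)$ for all $X$, iff (taking $X$ fibrant, which suffices) the map $l(q^{\B}_{rX})\colon l Q_{\B}(rX) \to l(rX)$ becomes an isomorphism in $\Ho(\A)$ — and since $l$ is left Quillen and $q^{\B}_{rX}$ a weak equivalence between appropriate objects, by Ken Brown's lemma (\cite[Lemma~1.1.12]{hoveybook}) $l(q^{\B}_{rX})$ being a weak equivalence is the natural condition to isolate. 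I would verify that "weak equivalence in $\A$" is exactly the right reformulation of "isomorphism in $\Ho(\A)$" for this particular map, using that $Q_{\B}(rX)$ is cofibrant and that $l$ preserves cofibrant objects, so both source and target of $l(q^{\B}_{rX})$ are cofibrant and the homotopy-category isomorphism is detected by being a weak equivalence.

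The main obstacle I expect is the second item: carefully matching the replacements and checking that the Ext-isomorphism in (ii) is \emph{exactly} what full faithfulness gives — i.e.\ that no spurious replacement is needed and that heredity is invoked correctly so that $\Ext^n_{\A}(QX,RY)$, $\Ext^n_{\B}(QrRX, rRY)$, and the Hom-sets in the homotopy categories all genuinely agree on the nose rather than merely up to some uncontrolled comparison map. The general principles (counit iso $\Leftrightarrow$ right adjoint fully faithful; $\Ho$ of hereditary abelian model $\simeq$ stable category of a Frobenius category with Ext as shifted Hom) are all available from the cited literature and from Section~\ref{model_cotorsion}; the work is in the diagram-chasing needed to make the identifications precise and natural in $X,Y$.
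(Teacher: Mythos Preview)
Your approach matches the paper's: for (i)$\Leftrightarrow$(ii) the paper uses exactly the identification $\Hom_{\Ho(\A)}(X,\Sigma^n Y)\cong\Ext^n_{\A}(QX,RY)$ from \cite[Thm.~7.3.(4)]{Gil2020} together with the fact that $\mathbf{R}r$ is triangulated (so commutes with $\Sigma$), and for (i)$\Leftrightarrow$(iii) it uses the explicit formula for the derived counit $lQrRX\xrightarrow{lq_{rRX}}lrRX\xrightarrow{\epsilon_{RX}}RX$ from \cite[Prop.~1.3.13]{hoveybook}, just as you outline. Your invocation of Ken Brown's lemma is unnecessary and slightly off---the target $rX$ of $q^{\B}_{rX}$ need not be cofibrant, which is precisely why (iii) is a nontrivial condition rather than automatic---but this does not affect the argument.
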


\begin{proof}
At this point it is useful to recall from \cite[App.~A]{Gil2020} the autoequivalence $\Sigma$ of $\Ho(\A)$. For $X$ in $\A$ we denote by $\Sigma X$ the cokernel of a momomorphism from $X$ to an object in $\W_{\A}$, which is uniquely determined, up to isomorphism, in $\Ho(\A)$. In fact, $\Sigma$ defines an autoequivalence of $\Ho(\A)$.

(i)$\Longrightarrow$(ii)$\colon$Assume that $\textbf{R}r$ is fully faithful. For all $X,Y$ in $\A$ we have an induced bijective map
\begin{equation}
\label{eq:induced_map}
 \xymatrix{
\Hom_{\Ho(\A)}(X,Y)   \ar[r]^-{\phi_{X,Y}}  & \Hom_{\Ho(\B)}(\textbf{R}rX,\textbf{R}rY)= \Hom_{\Ho(\B)}(rRX, rRY) 
  }
\end{equation}
where $RX$ and $RY$ are fibrant replacements of $X$ and $Y$ respectively. In addition, from \cite[Theorem~7.3.(4)]{Gil2020}, for all $n\geqslant 1$, there is an isomorphism $\Hom_{\Ho(\A)}(X, \Sigma^nY)\cong \Ext_{\A}^n(QX, RY)$ and similarly, $\Hom_{\Ho(\B)}(rRX,\Sigma^nrRY)\cong \Ext_{\B}^n(QrRX, rRY)$. We also have that in $\Ho(\B)$: 
\begin{equation}
\label{eq:sigma_commutes}
rR\Sigma^nY=\textbf{R}r\Sigma^nY\cong \Sigma^n \textbf{R}rY=\Sigma^n rRY,
\end{equation}
where the isomorphism in the middle follows since $\mathbf{R}r$ is triangulated by Corollary~\ref{cor:triang_Quillen_adj}.
Hence we have the following commutative diagram for all $n\geqslant 1\colon$
\begin{equation}
\label{comsquareExt}
 \xymatrix{
\Hom_{\Ho(\A)}(X, \Sigma^nY)   \ar[r]^-{\phi_{X,\Sigma^{n}Y}} \ar[d]^{\cong}  & \Hom_{\Ho(\B)}(rRX,rR\Sigma^nY) \ar[d]^{\cong} \\
\Ext_{\A}^n(QX, RY) \ar[r] & \Ext_{\B}^n(QrRX, rRY). 
}
\end{equation}
The upper horizontal map being an isomorphism implies that the lower one is.

(ii)$\Longrightarrow$(i)$\colon$ Assuming that condition (ii) holds, we obtain in particular that, for all $X,Y$ in $\A$, if we choose some $\Sigma X$ in $\A$ (which is uniquely determined in $\Ho(\A)$), there is an isomorphism $\Ext_{\A}^1(Q\Sigma X, RY)\cong \Ext_{\B}^1(QrR\Sigma X, rRY)$, which via the commutative square (\ref{comsquareExt}) (for $n=1$) induces an isomorphism \[\Hom_{\Ho(\A)}(\Sigma X, \Sigma Y) \cong \Hom_{\Ho(\B)}(\mathbf{R}r \Sigma X, \mathbf{R}r \Sigma Y).\] The result now follows since $\Sigma $ is an automorphism of $\Ho(\A)$.

Finally, in case $r$ is fully faithful, the bi-implication (i) $\Longleftrightarrow$ (iii) holds for all model categories and follows easily once we recall, for instance from \cite[Prop.~1.3.13]{hoveybook}, that the counit of the derived adjunction $\mathbf{L}l\colon\Ho(\B)\leftrightarrows\Ho(\A)\colon\mathbf{R}r$ is given, at any $X$ in $\Ho(\A)$, by the composite
\[ 
lQrRX\xrightarrow{lq_{rRX}}lrRX\xrightarrow{\epsilon_{RX}} RX;
\]
here $R$ denotes the fibrant replacement functor in $\A$, $Q$ denotes the cofibrant replacement functor in $\B$ and $\epsilon$ is the ordinary counit of $l\dashv r$, which is an isomorphism since $r$ was assumed fully faithful.
\end{proof}

We also state the dual result for later use, the proof is left to the reader.

\begin{proposition} 
\label{prop:fully_faithful_left_derived}
Let $\A$ and $\B$ be two hereditary abelian model structures with Hovey triples $(\C_{\A},\W_{\A},\F_{\A})$ and $(\C_{\B},\W_{\B},\F_{\B})$ respectively. Let $l\colon\B\leftrightarrows\A\colon r$ be a Quillen adjunction, where $l$ is the left adjoint. The following are equivalent:
\begin{itemize}
\item[(i)] The total left derived functor $\textbf{L}l$ is fully faithful.

\item[(ii)] For all $X,Y\in \B$ and $n\geqslant 1$,\, $\Ext_{\B}^n(RY, QX)\cong \Ext_{\A}^n(lQX, RlQY)$.
\end{itemize}
In case $l$ is fully faithful, the above statements are also equivalent to:
\begin{itemize}
\item[(iii)] For all $X$ cofibrant in $\B$, the weak equivalence $r^{\A}_{lX}\colon lX\rightarrow R_{\A}(lX)$ is mapped to a weak equivalence under $r$.
\end{itemize}
\end{proposition}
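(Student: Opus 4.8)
The plan is to prove Proposition~\ref{prop:fully_faithful_left_derived} by dualizing the proof of Proposition~\ref{prop:fully_faithful_right_derived} step by step. As in that proof, I would first recall from \cite[App.~A]{Gil2020} the autoequivalence $\Sigma$ of $\Ho(\B)$: for $X$ in $\B$ we set $\Sigma X$ to be the cokernel of a monomorphism from $X$ into a trivial object, which is well-defined up to isomorphism in $\Ho(\B)$. Since $\mathbf{L}l$ is a triangulated functor by Proposition~\ref{prop:left_Quillen_triang}, we have $lQ\Sigma^{n}X\cong\Sigma^{n}lQX$ in $\Ho(\A)$ (more precisely, after one further fibrant replacement on each side). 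The key homological input is again \cite[Theorem~7.3.(4)]{Gil2020}, which in $\B$ gives $\Hom_{\Ho(\B)}(X,\Sigma^{n}Y)\cong\Ext_{\B}^{n}(QX,RY)$, and the analogous statement in $\A$.

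For the implication (i)$\Longrightarrow$(ii), I would start from the assumption that $\mathbf{L}l$ is fully faithful, which yields for all $X,Y\in\B$ a bijection
\[
\phi_{X,Y}\colon\Hom_{\Ho(\B)}(X,Y)\xrightarrow{\ \cong\ }\Hom_{\Ho(\A)}(\mathbf{L}lX,\mathbf{L}lY)=\Hom_{\Ho(\A)}(lQX,lQY).
\]
Applying this with $Y$ replaced by $\Sigma^{n}Y$ and assembling a commutative square exactly as (\ref{comsquareExt}) — with vertical isomorphisms supplied by \cite[Thm.~7.3.(4)]{Gil2020} on both sides and the identification $lQ\Sigma^{n}Y\cong\Sigma^{n}lQY$ (which, after a fibrant replacement, reads $Rl Q\Sigma^n Y\cong \Sigma^n RlQY$) — I would conclude that the bottom map $\Ext_{\B}^{n}(QY,\ldots)\to\Ext_{\A}^{n}(lQ\ldots,\ldots)$ is an isomorphism. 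A small amount of care is needed with the exact form of the arguments so that the display matches the stated isomorphism $\Ext_{\B}^n(RY,QX)\cong\Ext_{\A}^n(lQX,RlQY)$; this amounts to bookkeeping of where fibrant and cofibrant replacements sit, using that $Q$ and $R$ become mutually inverse on $\Ho$.

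For (ii)$\Longrightarrow$(i), I would specialize condition (ii) to $n=1$ with $X$ replaced by a choice of $\Sigma X$ in $\B$, run the commutative square from the previous paragraph backwards to obtain $\Hom_{\Ho(\B)}(\Sigma X,\Sigma Y)\cong\Hom_{\Ho(\A)}(\mathbf{L}l\Sigma X,\mathbf{L}l\Sigma Y)$, and then invoke that $\Sigma$ is an autoequivalence of $\Ho(\B)$ to deduce full faithfulness of $\mathbf{L}l$ on all hom-sets. Finally, for the equivalence with (iii) when $l$ is fully faithful, I would use the dual of \cite[Prop.~1.3.13]{hoveybook}: the unit of the derived adjunction $\mathbf{L}l\colon\Ho(\B)\leftrightarrows\Ho(\A)\colon\mathbf{R}r$ is, at a cofibrant $X$, the composite $QX\xrightarrow{\eta_{QX}}rlQX\xrightarrow{rr^{\A}_{lQX}}rR_{\A}(lQX)$, where $\eta$ is the ordinary unit of $l\dashv r$, which is an isomorphism precisely because $l$ is fully faithful; hence this unit is an isomorphism in $\Ho(\B)$ if and only if $r$ carries $r^{\A}_{lX}$ to a weak equivalence, and $\mathbf{L}l$ is fully faithful if and only if this unit is invertible. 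The only genuinely delicate point — and the place I expect to spend the most effort — is making the commutative square analogous to (\ref{comsquareExt}) literally produce the asymmetric-looking formula in (ii); once the replacement-functor bookkeeping is pinned down, everything else is the formal dual of the proof already given.
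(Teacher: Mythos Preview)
Your approach is exactly what the paper intends: the authors explicitly leave this proof to the reader as the dual of Proposition~\ref{prop:fully_faithful_right_derived}, and your step-by-step dualization (including the use of \cite[Thm.~7.3(4)]{Gil2020}, the triangulated structure of $\mathbf{L}l$, and the derived unit formula from \cite[Prop.~1.3.13]{hoveybook}) is precisely that. Your unease about the ``asymmetric-looking'' left-hand side of (ii) is well founded: the clean dual of Proposition~\ref{prop:fully_faithful_right_derived}(ii) reads $\Ext_{\B}^{n}(QX,RY)\cong\Ext_{\A}^{n}(lQX,RlQY)$, and the printed $\Ext_{\B}^{n}(RY,QX)$ appears to be a typographical slip rather than something you need to manufacture by replacement-functor gymnastics.
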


The last two results of this section concern Quillen adjoint triples in relation with fully faithful derived functors.

\begin{proposition} 
\label{prop:left_right_derived_embedding}
Let $\mathrm{Q_{tr}}(\A_{1/2},\B_{1/2},\alpha)$ be a Quillen adjoint triple. Then $\mathbf{R}i$ is fully faithful if and only if $\mathbf{L}i'$ is fully faithful. 
\end{proposition}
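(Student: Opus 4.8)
The plan is to exploit the natural isomorphism $\mathbf{ho}(\alpha)\colon \mathbf{L}g\circ\mathbf{R}i\Rightarrow\mathbf{L}i'\circ\mathbf{L}f$ supplied by the definition of a Quillen adjoint triple, together with the fact that $\mathbf{L}f$ and $\mathbf{L}g$ are equivalences of categories (since $f$ and $g$ are the left adjoints of Quillen equivalences). The key observation is that full faithfulness of a functor is preserved under pre- and post-composition with equivalences, and is detected by the composite.

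First I would recall that $\mathbf{L}f\colon\Ho(\A_1)\xrightarrow{\simeq}\Ho(\A_2)$ and $\mathbf{L}g\colon\Ho(\B_1)\xrightarrow{\simeq}\Ho(\B_2)$ are equivalences; this is the standard fact that a Quillen equivalence induces an equivalence on homotopy categories \cite[Prop.~1.3.13]{hoveybook}. Next, since $\mathbf{ho}(\alpha)$ is a natural isomorphism, we have $\mathbf{L}i'\circ\mathbf{L}f\cong \mathbf{L}g\circ\mathbf{R}i$ as functors $\Ho(\A_1)\rightarrow\Ho(\B_2)$. Now suppose $\mathbf{R}i$ is fully faithful. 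Then $\mathbf{L}g\circ\mathbf{R}i$ is fully faithful, being the composite of a fully faithful functor with an equivalence; hence $\mathbf{L}i'\circ\mathbf{L}f$ is fully faithful. Since $\mathbf{L}f$ is an equivalence, it is in particular fully faithful and essentially surjective, and a short diagram chase on hom-sets shows that if $G\circ\mathbf{L}f$ is fully faithful with $\mathbf{L}f$ an equivalence then $G$ is fully faithful; applying this with $G=\mathbf{L}i'$ gives that $\mathbf{L}i'$ is fully faithful. The converse is symmetric: if $\mathbf{L}i'$ is fully faithful, then $\mathbf{L}i'\circ\mathbf{L}f\cong \mathbf{L}g\circ\mathbf{R}i$ is fully faithful, and cancelling the equivalence $\mathbf{L}g$ on the left (again: if $\mathbf{L}g\circ F$ is fully faithful and $\mathbf{L}g$ is an equivalence then $F$ is fully faithful) yields that $\mathbf{R}i$ is fully faithful.

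I expect there is essentially no obstacle here; the only thing requiring a line of justification is the elementary categorical lemma that full faithfulness can be cancelled against an equivalence on either side, which follows immediately from the bijections on hom-sets induced by the equivalence. One might alternatively package this via the mate $\widetilde{\mathbf{ho}(\alpha)}$ from Remark~\ref{rem:mates}, but the direct argument through $\mathbf{ho}(\alpha)$ is cleaner. I would present the proof in two or three sentences, citing \cite[Prop.~1.3.13]{hoveybook} for the equivalences $\mathbf{L}f,\mathbf{L}g$ and Remark~\ref{rem:induced_adj_triple} (or the definition directly) for the isomorphism $\mathbf{ho}(\alpha)$, and leaving the cancellation lemma as an obvious remark.
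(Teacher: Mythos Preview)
Your proof is correct and follows essentially the same approach as the paper: both use the natural isomorphism $\mathbf{ho}(\alpha)\colon \mathbf{L}g\circ\mathbf{R}i\Rightarrow\mathbf{L}i'\circ\mathbf{L}f$ together with the fact that $\mathbf{L}f$ and $\mathbf{L}g$ are equivalences (in particular fully faithful), and then cancel the equivalences on either side. The paper's version is a one-line remark to the same effect, so your slightly more explicit justification of the cancellation step is harmless overkill but not a different argument.
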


\begin{proof}
By definition of a Quillen adjoint triple, there is a natural isomorphism $\mathbf{ho}(\alpha)\colon\mathbf{L}g\circ \mathbf{R}i\Rightarrow\mathbf{L}i'\circ\mathbf{L}f$, where the functors $\mathbf{L}f$ and $\mathbf{L}g$ are  (in particular) fully faithful. Thus $\mathbf{L}i'$ is fully faithful if and only if $\mathbf{R}i$ is fully faithful.
\end{proof}

\begin{proposition}
\label{prop:left_right_fully_faithful}
Let $\mathrm{Q_{tr}}(\A_{1/2},\B_{1/2},\alpha)$ be a Quillen adjoint triple as in Definition \ref{def:Quillen_adjoint_triple}, where $f, f', g, g'$ are identity morphisms, $i=i'$ and $\alpha=\mathrm{id}$. Assume that the functors $p$ and $q$ are fully faithful and that $\B_1$ and $\B_2$ share the same class of weak equivalences. Then the functors $\textbf{L}q$ and $\textbf{R}p$ are fully faithful.
\end{proposition}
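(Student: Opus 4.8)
The plan is to reduce the statement to Propositions~\ref{prop:fully_faithful_right_derived} and~\ref{prop:fully_faithful_left_derived}, using the fact that in this degenerate situation the derived functors $\mathbf{L}q$, $\mathbf{R}p$ are honest adjoints (on one side) of the functor $\mathbf{R}i\cong\mathbf{L}i$ furnished by Proposition~\ref{prop:special_triple}. First I would recall that, by Proposition~\ref{prop:special_triple}, we have a genuine adjoint triple $\mathbf{L}q\dashv\mathbf{R}i\cong\mathbf{L}i\dashv\mathbf{R}p$ on the common homotopy categories $\Ho(\A_{1/2})$ and $\Ho(\B_{1/2})$. A general categorical fact (the left adjoint in an adjunction is fully faithful iff the unit is an isomorphism, and dually for the right adjoint and the counit) reduces everything to checking that the unit of $\mathbf{L}q\dashv\mathbf{R}i$ is invertible and the counit of $\mathbf{L}i\dashv\mathbf{R}p$ is invertible.

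The key step is to transfer the hypothesis that $p$ and $q$ are fully faithful (as ordinary functors) to the derived level. Here I would invoke the ``in case $l$ is fully faithful'' part of Proposition~\ref{prop:fully_faithful_left_derived}, respectively the ``in case $r$ is fully faithful'' part of Proposition~\ref{prop:fully_faithful_right_derived}. Concretely: the adjoint pair $(q,i)$ viewed between $\B_1$ and $\A_1$ is a Quillen adjunction with left adjoint $q$, and $q$ is fully faithful by assumption, so by Proposition~\ref{prop:fully_faithful_left_derived}(iii) it suffices to show that for every cofibrant $X$ in $\B_1$ the fibrant replacement map $r^{\A_1}_{qX}\colon qX\to R_{\A_1}(qX)$ is sent by $i$ to a weak equivalence. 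But $i\circ\mathrm{id}\colon\A_1\to\B_2$ is left Quillen (this is exactly the hypothesis packaged in the Quillen adjoint triple with $i=i'$), hence it sends the trivial cofibration $r^{\A_1}_{qX}$ to a trivial cofibration in $\B_2$, which is a weak equivalence in $\B_2$, hence — since $\B_1$ and $\B_2$ share the same class of weak equivalences — a weak equivalence in $\B_1$. This gives condition (iii), so $\mathbf{L}q$ is fully faithful. Dually, the adjoint pair $(i,p)$ between $\A_2$ and $\B_2$ is a Quillen adjunction with right adjoint $p$, which is fully faithful; by Proposition~\ref{prop:fully_faithful_right_derived}(iii) it suffices that for every fibrant $X$ in $\A_2$ the cofibrant replacement map $q^{\B_2}_{pX}\colon Q_{\B_2}(pX)\to pX$ is sent by $i$ to a weak equivalence; and $i\circ\mathrm{id}\colon\A_2\to\B_1$ is right Quillen, so it sends the trivial fibration $q^{\B_2}_{pX}$ to a trivial fibration in $\B_1$, again a weak equivalence in the common class. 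Hence $\mathbf{R}p$ is fully faithful.

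The main obstacle is bookkeeping rather than conceptual: one must be careful about which of the two model structures $\B_1,\B_2$ (equivalently $\A_1,\A_2$) each adjunction is Quillen with respect to, since $i$ is simultaneously left Quillen $\A_1\to\B_2$ and right Quillen $\A_2\to\B_1$, and the fully-faithfulness criteria in Propositions~\ref{prop:fully_faithful_right_derived} and~\ref{prop:fully_faithful_left_derived} are stated for a single ambient hereditary abelian model structure on each side. The hypothesis that $\B_1$ and $\B_2$ have the same weak equivalences (hence the same homotopy category $\Ho(\B_{1/2})$, by the remark preceding Proposition~\ref{prop:special_triple}) is precisely what lets one move a weak equivalence produced in $\B_2$ back to $\B_1$, so that the criterion can be applied in the model structure that hosts the relevant cofibrant/fibrant replacement. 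Once this matching is set up correctly, the argument is a short diagram check using \cite[Lemma~1.1.12]{hoveybook} for the preservation of weak equivalences between cofibrant (resp.\ fibrant) objects under left (resp.\ right) Quillen functors, exactly as in the proof of Proposition~\ref{prop:special_triple}.
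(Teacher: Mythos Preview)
Your argument is correct and follows the same route as the paper: apply the fully-faithfulness criterion (iii) from Propositions~\ref{prop:fully_faithful_right_derived}/\ref{prop:fully_faithful_left_derived}, use that $i$ is simultaneously right Quillen $\A_1\to\B_1$ and left Quillen $\A_2\to\B_2$ (so also right Quillen $\A_2\to\B_1$ and left Quillen $\A_1\to\B_2$ via the identity Quillen equivalences), and then pass weak equivalences between $\B_1$ and $\B_2$ using the shared class. The paper does exactly this for $\mathbf{R}p$ and then deduces $\mathbf{L}q$ from the adjoint triple~(\ref{eq:adjoint_triple_ab2}) rather than repeating the dual argument, but that is only a cosmetic difference.

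Two small bookkeeping slips to fix: in your treatment of $\mathbf{R}p$, the fibrant object $X$ should live in $\B_2$ (not $\A_2$), since $p\colon\B_2\to\A_2$; correspondingly the cofibrant replacement is taken in $\A_2$, so the map is $q^{\A_2}_{pX}\colon Q_{\A_2}(pX)\to pX$, not $q^{\B_2}_{pX}$. With those corrections the argument goes through verbatim.
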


\begin{proof}
It suffices to prove that $\textbf{R}p$ is fully faithful since in that case $\textbf{L}q$ will be fully faithful too, by the adjoint triple (\ref{eq:adjoint_triple_ab2}),  or (\ref{eq:adjoint_triple_mixed2}). According to Proposition \ref{prop:fully_faithful_right_derived}, we need to prove that for any fibrant object $X$ in $\B_{2}$, the weak equivalence (trivial fibration) $Q_{\A_2}(pX)\xrightarrow{q^{\A_2}_{pX}} p(X)$ in $\A_2$ is mapped to a weak equivalence \[(i(Q_{\A_2}(pX))\xrightarrow{i(q^{\A_2}_{pX})} i (p(X))\xrightarrow{\epsilon_{X}}X\]
in $\B_{2}$ (here the counit map $\epsilon_{X}$ is an isomorphism since $p$ is fully faithful). 
Since $\mathrm{id}\colon \A_2\rightarrow\A_1$ is right Quillen, $q^{\A_2}_{pX}$ is a trivial fibration in $\A_1$. Since $i\colon \A_1\rightarrow \B_1$ is also a right Quillen functor, it will map the trivial fibration $q^{\A_2}_{pX}$ in $\A_1$ to a trivial fibration $i\circ q^{\A_2}_{pX}$ in $\B_1$. But $\B_1$ and $\B_2$ share the same weak equivalences, thus $i\circ q^{\A_2}_{pX}$ is a weak equivalence in $\B_2$ as needed.
\end{proof}

\section{A double pseudofunctor}
\label{sec:a_double_psadofunctor}
The concept of Quillen adjoint triples, as in Definition~\ref{def:Quillen_adjoint_triple}, is related to comparing composites of left and right Quillen functors. Shulman \cite{Shulman} has studied such situations under the prism of double categories. In this section we recall the concept of double categories and relate them with Quillen adjoint triples in Theorem \ref{thm:double_vs_triple}.
In most of the section we follow closely Shulman \cite{Shulman}.

\begin{definition}
A \textbf{double category} $\underline{C}$ consists of the following data.
Two categories, the \textit{horizontal} $\mathcal{H}$ and the \textit{vertical} $\mathcal{V}$ which consist of the same objects (or 0-cells),
two types of morphisms (or 1-cells), the morphisms of $\mathcal{H}$ which are called \textit{horizontal}, and the morphisms of $\mathcal{V}$ which are called \textit{vertical},
and squares (or 2-cells) of the form 
\begin{equation}
\label{eq:2-cell}
  \xymatrix@C=2pc{
a\ar[r]^-{h} \drtwocell\omit{\alpha}  \ar[d]_-{v} & b\ar[d]^-{v'} \\
c \ar[r]_{h'}    & d
}\!
\end{equation}
where $a,b,c,d$ are objects, $h,h'$ are horizontal morphisms and $v,v'$ are vertical morphisms. This data is subject to the following conditions.
For each object $a$ we denote by $1^a$ its identity in $\mathcal{H}$ and by $1_a$ its identity in $\mathcal{V}$ and we require that for every arrow $v\colon a\rightarrow b$ in $\mathcal{V}$ there exists an \textit{identity 2-cell}  
\begin{equation}
  \xymatrix@C=2pc{
\ar[r]^-{1^a} \drtwocell\omit{1^v}  \ar[d]_-{v} & \ar[d]^-{v} \\
 \ar[r]_{1^b}    & 
  }\!
  \nonumber
  \end{equation}
and similarly, we require that for every arrow $h\colon a\rightarrow c$ in $\mathcal{H}$ there exists an \textit{identity 2-cell}
\begin{equation}
  \xymatrix@C=2pc{
\ar[r]^-{h} \drtwocell\omit{1_h}  \ar[d]_-{1_a} & \ar[d]^-{1_c} \\
 \ar[r]_{h}    & \,\,\,\,\,\, . 
  }\!
 \nonumber
  \end{equation}
We require that the 2-cells can be composed horizontally and vertically, forming categories in each direction. 
We denote horizontal composition of 2-cells by $\alpha\boxbar\beta$, or diagramatically by
\begin{equation}
  \xymatrix@C=2pc{
\ar[r] \drtwocell\omit{\alpha}  \ar[d] & \ar[d] \ar[r]  \drtwocell\omit{\beta} & \ar[d]  \\
   \ar[r]& \ar[r] & \,\,\, .
}\!
\nonumber
\end{equation}
  
Similarly, vertical composition of 2-cells is denoted by $\beta'\boxminus\alpha'$ or 
\begin{equation}
\xymatrix@C=2pc{
\ar[r] \drtwocell\omit{\alpha'}  \ar[d] & \ar[d] \\
\ar[r]  \drtwocell\omit{\beta'} \ar[d] & \ar[d] \\
\ar[r] & 
}\!
\nonumber
\end{equation}
and the following interchange law is required to hold
\[(\alpha\boxbar\beta)\boxminus(\gamma\boxbar\delta)=(\alpha\boxminus\gamma)\boxbar(\beta\boxminus\delta).\]

Finally, the following laws are required to hold: $1^{1_a}=1_{1^a}$, $1^{v'}\boxminus 1^{v}=1^{v'v}$ and $1_h\boxbar 1_{h'}=1_{h'h}$.
\end{definition}

The following examples are relevant to this paper.

\begin{example}
\label{ex:double_cats}
\begin{itemize}
\item[(i)]The double category $\underline{\mathit{Cat}}$ where the objects are categories, the 1-cells are functors (horizontal and vertical) and  the 2-cells as in (\ref{eq:2-cell}) are natural transformations $\alpha\colon v'\circ h\Rightarrow h'\circ v$.
\item[(ii)] The double category $\underline{\mathit{Trian}}\mathrm{g}$ where the objects are triangulated categories, the 1-cells are triangulated functors (horizontal and vertical) and  the 2-cells as in (\ref{eq:2-cell}) are natural transformations $\alpha\colon v'\circ h\Rightarrow h'\circ v$.
\item[(iii)] The double category $\underline{\mathit{Model}}$ where the objects are model categories, the horizontal 1-cells are right Quillen functors, the vertical 1-cells are left Quillen functors, and the 2-cells as in (\ref{eq:2-cell}) are natural transformations $\alpha\colon v'\circ h\Rightarrow h'\circ v$.
\item[(iv)] The double category $\underline{\mathit{hAbModel}}$ where the objects are hereditary abelian model categories, the horizontal 1-cells are right Quillen functors, the vertical 1-cells are left Quillen functors, and the 2-cells as in (\ref{eq:2-cell}) are natural transformations $\alpha\colon v'\circ h\Rightarrow h'\circ v$.
 \end{itemize}
\end{example}

An appropriate notion of morphism between double categories is that of a \textit{double pseudofunctor} as in \cite[Definition~6.1]{Shulman}. Part of the definition of a double pseudofunctor is that certain coherence axioms in both the vertical and horizontal directions need to hold. We do not wish to repeat the definition here thus we refer to \textit{loc.cit} for the technical details.

Shulman in \cite[Theorem.~7.6]{Shulman} constructs a double pseudofunctor 
\begin{equation}
\mathbf{Ho}\colon\underline{\mathit{Model}}\rightarrow\underline{\mathit{Cat}}
\nonumber
\end{equation}
which maps a model category to its homotopy category, a right Quillen functor to its right derived functor, a left Quillen functor to its left derived functor, and a 2-cell
\begin{equation}
  \xymatrix@C=3pc{
\A\ar[r]^-{h} \drtwocell\omit{\alpha}  \ar[d]_-{v} & \C\ar[d]^-{v'} \\
\B \ar[r]_{h'}    & \D
  }\!
  \nonumber
  \end{equation}
to the following 2-cell
\begin{equation}
  \xymatrix@C=3pc{
\Ho(\A)\ar[r]^-{\textbf{R}h} \drtwocell\omit{\,\,\,\,\,\,\,\,\,\,\,\,\mathbf{ho}(\alpha)}  \ar[d]_-{\textbf{L}v} & \Ho(\C)\ar[d]^-{\textbf{L}v'} \\
\Ho(\B) \ar[r]_{\textbf{R}h'}    & \Ho(\D)
  }\!
  \nonumber
\end{equation}
defined by the following composite of zigzags in $\Ho(\D)$, where $R$ and $Q$ refer to fibrant and cofibrant replacement functors to be interpreted in the appropriate category:
\begin{equation}
\label{eq:composite_map_triples_Shulman}
\xymatrix@C=2pc{
v'QhR  & 
v'QhQR \ar[l]^-{\sim} \ar[r]^-{v'q^{\C}_{h QR}} &
v'hQR \ar[d]^-{\alpha_{QR}} & &   \\
& & 
h'v QR \ar[r]^-{h'R^{\B}_{v QR}}  &
h'RvQR &
h'RvQ. \ar[l]^-{\sim}
\nonumber
}\!
\end{equation}

Proposition \ref{prop:left_Quillen_triang} together with \cite[Theorem~7.6]{Shulman} combine to give the following observation.

\begin{proposition}
The double pseudofunctor $\mathbf{Ho}\colon\underline{\mathit{Model}}\rightarrow\underline{\mathit{Cat}}$ restricts to a double pseudofunctor
\begin{equation}
\mathbf{Ho}\colon\underline{\mathit{hAbModel}}\rightarrow\underline{\mathit{Trian}}\mathrm{g}
\nonumber
\end{equation}
between the double category of hereditary abelian model structures and the double category of triangulated categories, as defined in Example \ref{ex:double_cats}.
\end{proposition}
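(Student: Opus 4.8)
The plan is to combine two facts that are already on the table. First, Shulman's Theorem 7.6 gives a double pseudofunctor $\mathbf{Ho}\colon\underline{\mathit{Model}}\rightarrow\underline{\mathit{Cat}}$ that sends each model category to its homotopy category, each right (resp.\ left) Quillen functor to its total right (resp.\ left) derived functor, and each $2$-cell $\alpha$ to the $2$-cell $\mathbf{ho}(\alpha)$ described by the zigzag \eqref{eq:composite_map_triples_Shulman}. Second, Proposition~\ref{prop:left_Quillen_triang} says that the total left derived functor of a left Quillen functor between hereditary abelian model structures is triangulated, and Corollary~\ref{cor:triang_Quillen_adj} says the same for the total right derived functor of a right Quillen functor (as the right adjoint of a triangulated functor, using \cite[Lemma~5.3.6]{Nee}). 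So the content of the proposition is simply that $\mathbf{Ho}$ takes the sub-double-category $\underline{\mathit{hAbModel}}$ of Example~\ref{ex:double_cats}(iv) into the sub-double-category $\underline{\mathit{Trian}}\mathrm{g}$ of Example~\ref{ex:double_cats}(ii), and that all the structure maps of the double pseudofunctor restrict.

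The steps I would carry out are as follows. (1) On objects: a hereditary abelian model category has a canonically triangulated homotopy category, by the discussion recalled in Section~\ref{model_cotorsion} following Gillespie, so $\mathbf{Ho}$ sends $0$-cells of $\underline{\mathit{hAbModel}}$ to $0$-cells of $\underline{\mathit{Trian}}\mathrm{g}$. (2) On horizontal $1$-cells: a horizontal $1$-cell in $\underline{\mathit{hAbModel}}$ is a right Quillen functor, and by Corollary~\ref{cor:triang_Quillen_adj} its total right derived functor is triangulated, hence a horizontal $1$-cell in $\underline{\mathit{Trian}}\mathrm{g}$. (3) On vertical $1$-cells: a vertical $1$-cell is a left Quillen functor, and by Proposition~\ref{prop:left_Quillen_triang} its total left derived functor is triangulated, hence a vertical $1$-cell in $\underline{\mathit{Trian}}\mathrm{g}$. (4) On $2$-cells: a $2$-cell in $\underline{\mathit{hAbModel}}$ is just a natural transformation $\alpha\colon v'\circ h\Rightarrow h'\circ v$ with $h,h'$ right Quillen and $v,v'$ left Quillen, and $\mathbf{ho}(\alpha)$ is a natural transformation between composites of derived functors, which is exactly a $2$-cell of $\underline{\mathit{Trian}}\mathrm{g}$ (there is no triangulated-ness condition imposed on $2$-cells in Example~\ref{ex:double_cats}(ii), only on the $1$-cells they connect). (5) Finally, the coherence data of the double pseudofunctor $\mathbf{Ho}$ (unitors, the comparison isomorphisms $\mathbf{Ho}(h')\circ\mathbf{Ho}(h)\cong\mathbf{Ho}(h'\circ h)$ and similarly for vertical composites, and the interchange coherence) are all instances of the coherence data already present in $\mathbf{Ho}\colon\underline{\mathit{Model}}\rightarrow\underline{\mathit{Cat}}$, evaluated on objects and morphisms of the sub-double-category; since these land among triangulated categories and (by steps (2),(3)) triangulated functors, the axioms hold automatically in $\underline{\mathit{Trian}}\mathrm{g}$ because they hold in $\underline{\mathit{Cat}}$. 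Hence the restriction $\mathbf{Ho}\colon\underline{\mathit{hAbModel}}\rightarrow\underline{\mathit{Trian}}\mathrm{g}$ is a well-defined double pseudofunctor.

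I do not expect a genuine obstacle here; the proposition is a bookkeeping statement that packages Proposition~\ref{prop:left_Quillen_triang} (and its corollary) into Shulman's framework. The only point that needs a word of care is step (4)/(5): one should note that $\underline{\mathit{Trian}}\mathrm{g}$ is defined with arbitrary natural transformations as $2$-cells, so no further verification is needed for $2$-cells beyond observing that $\mathbf{ho}(\alpha)$ is a natural transformation, which is part of Shulman's theorem; and that the coherence axioms are inherited because $\underline{\mathit{hAbModel}}$ and $\underline{\mathit{Trian}}\mathrm{g}$ are (non-full, but structure-preserving) sub-double-categories of $\underline{\mathit{Model}}$ and $\underline{\mathit{Cat}}$ respectively, with the forgetful double functors $\underline{\mathit{hAbModel}}\hookrightarrow\underline{\mathit{Model}}$ and $\underline{\mathit{Trian}}\mathrm{g}\hookrightarrow\underline{\mathit{Cat}}$ being faithful on all cells. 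The proof therefore amounts to citing \cite[Theorem~7.6]{Shulman}, invoking Proposition~\ref{prop:left_Quillen_triang} and Corollary~\ref{cor:triang_Quillen_adj} to see the $1$-cells land in the right place, and remarking that everything else is inherited.
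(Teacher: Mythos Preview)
Your proposal is correct and matches the paper's approach: the paper states the proposition as an immediate consequence of Proposition~\ref{prop:left_Quillen_triang} (and Corollary~\ref{cor:triang_Quillen_adj}) together with \cite[Theorem~7.6]{Shulman}, without giving further detail. Your steps (1)--(5) spell out exactly what this combination entails, and in fact provide more detail than the paper does.
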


\subsection{Relation to Quillen adjoint triples}
\label{subsec:relation_to_Quillen}

We want to interpret Definition \ref{def:Quillen_adjoint_triple} in the context of double categories. Assume that we are given a Quillen adjoint triple $\mathrm{Q_{tr}}(\A_{1/2},\B_{1/2},\alpha)$.
The natural transformation $\alpha$, in the double category $\underline{\mathit{Model}}$, may be written as a 2-cell
\begin{equation}
\label{eq:alpha_in_double_cat}
\xymatrix@C=3pc{
\A_1\ar[r]^-{i} \ddrtwocell\omit{\alpha}  \ar[d]_-{f} & \B_1 \ar[dd]^-{g} \\
\A_2   \ar[d]_{\mathrm{i'}} & \\
\B_2 \ar[r]^-{\mathrm{id}} & \B_2.
}\!
\nonumber
\end{equation}

We remark that the asymmetry of this 2-cell compared to the diagram (\ref{eq:alpha_in_def_of_Quillen_adj}) in Definition \ref{def:Quillen_adjoint_triple} occurs since we need to distinguish between left and right Quillen functors in the double category $\underline{\mathit{Model}}$.

The following result relates the double pseudofunctor $\mathbf{Ho}\colon\underline{\mathit{Model}}\rightarrow\underline{\mathit{Cat}}$ to the concept of a Quillen adjoint triple.

\begin{theorem}
\label{thm:double_vs_triple}
The double pseudofunctor $\mathbf{Ho}\colon\underline{\mathit{Model}}\rightarrow\underline{\mathit{Cat}}$ maps the 2-cell $\alpha$ to a natural isomorphism $\mathbf{Ho}(\alpha)$ if and only if $\mathrm{Q_{tr}}(\A_{1/2},\B_{1/2},\alpha)$ is a Quillen adjoint triple in the sense of Definition \ref{def:Quillen_adjoint_triple}.
\end{theorem}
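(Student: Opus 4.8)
The plan is to unpack both sides of the asserted equivalence and observe that they are, essentially by construction, the same condition. The theorem compares two notions: on one hand, that the double pseudofunctor $\mathbf{Ho}$ sends the $2$-cell $\alpha$ (as written in the double category $\underline{\mathit{Model}}$, i.e.\ in the asymmetric form of diagram~(\ref{eq:alpha_in_double_cat})) to a natural isomorphism; on the other hand, that $\mathrm{Q_{tr}}(\A_{1/2},\B_{1/2},\alpha)$ is a Quillen adjoint triple in the sense of Definition~\ref{def:Quillen_adjoint_triple}, which by definition requires precisely that the zig-zag~(\ref{eq:composite_map_triples}) represent a natural isomorphism $\mathbf{ho}(\alpha)\colon \mathbf{L}g\circ\mathbf{R}i\Rightarrow \mathbf{L}i'\circ\mathbf{L}f$. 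So the content of the theorem is that the morphism produced by Shulman's formula~(\ref{eq:composite_map_triples_Shulman}), applied to the $2$-cell in~(\ref{eq:alpha_in_double_cat}), agrees (up to the coherence isomorphisms built into the double pseudofunctor) with the zig-zag~(\ref{eq:composite_map_triples}) appearing in Definition~\ref{def:Quillen_adjoint_triple}.

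First I would specialize Shulman's formula~(\ref{eq:composite_map_triples_Shulman}) to our $2$-cell. In~(\ref{eq:alpha_in_double_cat}) the horizontal $1$-cells (right Quillen functors) are $h=i$ and $h'=\mathrm{id}\colon\B_2\to\B_2$, the vertical $1$-cells (left Quillen functors) are $v=f$ and $v'=i'\circ g$ (read as the vertical composite $\B_2\xleftarrow{} \A_2\xleftarrow{}$, matching the shape of the diagram), and the target category is $\Ho(\B_2)$. Because $h'=\mathrm{id}$ is the identity, its right derived functor is the identity and the fibrant replacement $R^{\B}_{vQR}$ that appears on the bottom row of~(\ref{eq:composite_map_triples_Shulman}) becomes a weak equivalence that can be absorbed; similarly the coherence constraint comparing $\mathbf{L}(i'\circ g)$ with $\mathbf{L}i'\circ\mathbf{L}g$ (which is part of the pseudofunctor structure, since $\mathbf{Ho}$ is only a pseudofunctor) lets us rewrite $v'Q(-)$ in terms of $i'g Q(-)$. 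After this bookkeeping, Shulman's zig-zag collapses to exactly the composite displayed in~(\ref{eq:composite_map_triples}), with the vertical arrow being $\alpha_{QR}$ and the two weak-equivalence legs coming, respectively, from applying $g$ to a cofibrant replacement (as explained via diagram~(\ref{eq:explanation_in_def})) and from cofibrant replacement of the fibrant replacement map $r^{\A_1}_X$. The point to verify carefully is that the coherence isomorphisms of the double pseudofunctor do not introduce any obstruction: they are isomorphisms, so the Shulman $2$-cell is invertible if and only if the Definition~\ref{def:Quillen_adjoint_triple} zig-zag is.

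Second, I would record the formal upshot: a double pseudofunctor sends a $2$-cell to an isomorphism exactly when the designated representing morphism in the target is an isomorphism, and here that representing morphism is~(\ref{eq:composite_map_triples}) up to the invertible coherence data. Hence $\mathbf{Ho}(\alpha)$ is a natural isomorphism if and only if the natural transformation~$\mathbf{ho}(\alpha)$ of~(\ref{eq:composite_map_triples}) is, which is precisely the defining requirement for $\mathrm{Q_{tr}}(\A_{1/2},\B_{1/2},\alpha)$ to be a Quillen adjoint triple. (The remaining data of a Quillen adjoint triple — the two Quillen adjunctions $q\dashv i$, $i'\dashv p$ and the two Quillen equivalences $f\dashv f'$, $g\dashv g'$ — are already fixed on both sides and impose no further condition; they are what allow us to assemble~(\ref{eq:alpha_in_double_cat}) as a $2$-cell in $\underline{\mathit{Model}}$ in the first place.)

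The main obstacle is the careful matching of Shulman's zig-zag~(\ref{eq:composite_map_triples_Shulman}) with our zig-zag~(\ref{eq:composite_map_triples}) through the pseudofunctoriality constraints — in particular tracking how the vertical composite $v'=i'\circ g$ is handled, since $\mathbf{Ho}$ does not strictly preserve composition of left Quillen functors, so one genuinely needs the coherence cell $\mathbf{L}(i'\circ g)\cong\mathbf{L}i'\circ\mathbf{L}g$ and must check it is compatible with the replacement functors used. Once that identification is in place, the equivalence is immediate from the definition. I would also use Remark~\ref{rem:fibrant-cofibrant objects} (to be justified here via the present comparison, as promised there) to note that on fibrant-cofibrant objects everything reduces to the short composite~(\ref{eq:nat_map_cf}), which makes the invertibility check transparent and confirms the consistency of the two formulations.
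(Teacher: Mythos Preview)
Your overall strategy---write out Shulman's zig-zag for the asymmetric $2$-cell, strip away the legs that are automatically weak equivalences because $h'=\mathrm{id}$, and identify what remains with the zig-zag~(\ref{eq:composite_map_triples})---is exactly what the paper does. But there is a concrete error in your identification of the vertical arrows that propagates through the discussion.

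In the $2$-cell~(\ref{eq:alpha_in_double_cat}), the left vertical is the composite $\A_1\xrightarrow{f}\A_2\xrightarrow{i'}\B_2$, so $v=i'\circ f$, and the right vertical is $v'=g\colon\B_1\to\B_2$. You write $v=f$ and $v'=i'\circ g$, but $i'\colon\A_2\to\B_2$ and $g\colon\B_1\to\B_2$ are not composable, so the expression $i'\circ g$ and your subsequent coherence cell $\mathbf{L}(i'\circ g)\cong\mathbf{L}i'\circ\mathbf{L}g$ are meaningless. The coherence you would actually need is $\mathbf{L}(i'\circ f)\cong\mathbf{L}i'\circ\mathbf{L}f$, on the \emph{left} vertical. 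Once you correct this, your plan goes through, and in fact the paper dispenses with the coherence bookkeeping altogether: with the correct identifications, Shulman's formula reads
\[
gQiR \ \xleftarrow{\sim}\ gQiQR \ \xrightarrow{gq^{\B_1}_{iQR}}\ (g\circ i)QR \ \xrightarrow{\alpha_{QR}}\ (i'\circ f)QR \ \xrightarrow{\mathrm{id}\cdot r^{\B_2}_{(i'f)QR}}\ R(i'f)QR \ \xleftarrow{\sim}\ R(i'f)Q,
\]
and since $h'=\mathrm{id}$ the last two maps are weak equivalences. So modulo weak equivalences the composite is represented by
\[
gQiQR \ \xrightarrow{gq^{\B_1}_{iQR}}\ (g\circ i)QR \ \xrightarrow{\alpha_{QR}}\ (i'\circ f)QR,
\]
and the same reduction applied to~(\ref{eq:composite_map_triples}) (whose two outer legs are marked $\sim$) gives the identical expression. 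That is the whole argument; no separate analysis of pseudofunctor constraints is needed.
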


\begin{proof}
The double pseudofunctor $\mathbf{Ho}$ maps $\alpha$ to the following 2-cell in $\underline{\mathit{Cat}}$ 
\begin{equation}
\label{eq:Hoalpha_in_double_cat}
\xymatrix@C=3pc{
\Ho(\A_1)\ar[r]^-{\textbf{R}i} \ddrtwocell\omit{\,\,\,\,\,\,\,\,\,\,\,\,\mathbf{Ho}(\alpha)}  \ar[d]_-{\textbf{L}f} & \Ho(\B_1) \ar[dd]^-{\textbf{L}g} \\
\Ho(\A_2)   \ar[d]_{\textbf{L}\mathrm{i'}} & \\
\Ho(\B_2) \ar[r]^-{\textbf{R}\mathrm{id}} & \Ho(\B_2)
}
\nonumber
\end{equation}
which by definition is represented by the following composite of zig-zags, where $v:=i'\circ f$,
\begin{equation}
\label{eq:nat_map_cf_Shulman}
\xymatrix@C=3pc{
gQiR  & 
gQiQR \ar[l]^-{\sim} \ar[r]^-{gq^{\B_1}_{i QR}} &
(g\circ i)QR \ar[d]^-{\alpha_{QR}} & &   \\
& & 
(\mathrm{id}\circ v) QR \ar[r]^-{\mathrm{id}\circ R^{\B_2}_{v QR}}_-{\sim}  &
\mathrm{id}(RvQR) &
\mathrm{id}(RvQ). \ar[l]^-{\sim}
\nonumber
}\!
\end{equation}
Note that the map $\mathrm{id}\circ R^{\B_2}_{v QR}$ is a weak equivalence (we consider a fibrant replacement in $\B_2$ which will remain a weak equivalence after applying $\mathrm{id}\colon \B_2\rightarrow \B_2$). Hence modulo weak equivalences the above composite is represented by 
\begin{equation}
\label{eq:nat_map_cf_Shulman_2}
\xymatrix@C=3pc{
gQiQR \ar[r]^-{gq^{\B_1}_{i QR}} & (g\circ i)QR \ar[r]^-{\alpha_{QR}}  & (i'\circ f) QR 
\nonumber
}\!
\end{equation}
and exactly the same is true for the composite (\ref{eq:composite_map_triples}) in Definition \ref{def:Quillen_adjoint_triple}. 
\end{proof}

\begin{corollary}
\label{cor:Shulman}
The composite of zig-zags (\ref{eq:composite_map_triples}) in Definition \ref{def:Quillen_adjoint_triple} is a natural isomorphism in $\Ho(\B_2)$ if and only if for any fibrant-cofibrant object $X$ in $\A_1$, the composite
\begin{equation}
  \xymatrix@C=2pc{
gQ_{\B_{1}}(iX) \ar[r]^-{gq^{\B_{1}}_{iX}} & (g\circ i)X \ar[r]^-{\alpha_{X}} & (i'\circ f)X
}\!
\nonumber
\end{equation}
is a natural isomorphism in $\Ho(\B_{2})$.
\end{corollary}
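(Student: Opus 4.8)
The plan is to deduce this statement directly from Theorem~\ref{thm:double_vs_triple} together with a routine ``modulo weak equivalences'' comparison, exactly in the spirit of Remark~\ref{rem:fibrant-cofibrant objects}. First I would observe that, by Theorem~\ref{thm:double_vs_triple}, the composite of zig-zags (\ref{eq:composite_map_triples}) is a natural isomorphism in $\Ho(\B_2)$ if and only if the double pseudofunctor $\mathbf{Ho}$ sends the $2$-cell $\alpha$ to a natural isomorphism $\mathbf{Ho}(\alpha)$; so it suffices to characterise the latter property in terms of fibrant-cofibrant objects.

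The key step is then the following: to check that a natural transformation between functors on $\Ho(\A_1)$ is an isomorphism, it is enough to check it on objects of the form $X$ with $X$ fibrant-cofibrant in $\A_1$, since every object of $\Ho(\A_1)$ is isomorphic to such an $X$ (apply a cofibrant replacement followed by a fibrant replacement, or vice versa, both of which are weak equivalences). For such an $X$ I would unwind the composite of zig-zags (\ref{eq:composite_map_triples}): as already noted in Remark~\ref{rem:fibrant-cofibrant objects}, when $X$ is fibrant-cofibrant the top map in diagram (\ref{eq:explanation_in_def}) is $\mathrm{id}_{iX}$ and its cofibrant replacement is $\mathrm{id}_{QiX}$, so the upper-left arrow of (\ref{eq:composite_map_triples}) becomes $\mathrm{id}_{gQiX}$; dually the bottom-right arrow becomes $\mathrm{id}_{(i'\circ f)X}$. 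Hence for fibrant-cofibrant $X$ the whole zig-zag collapses to the composite
\[
gQ_{\B_1}(iX)\xrightarrow{\ gq^{\B_1}_{iX}\ }(g\circ i)X\xrightarrow{\ \alpha_X\ }(i'\circ f)X ,
\]
which is precisely (\ref{eq:nat_map_cf}).

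Combining these two observations finishes the argument: the composite (\ref{eq:composite_map_triples}) is a natural isomorphism in $\Ho(\B_2)$ $\iff$ it is an isomorphism at every fibrant-cofibrant $X$ in $\A_1$ $\iff$ the composite $\alpha_X\circ gq^{\B_1}_{iX}$ of (\ref{eq:nat_map_cf}) is an isomorphism in $\Ho(\B_2)$ for every fibrant-cofibrant $X$. One should also check naturality: the assignment $X\mapsto \alpha_X\circ gq^{\B_1}_{iX}$ is natural in fibrant-cofibrant $X$ because $q^{\B_1}_{-}$ and $\alpha$ are natural transformations, and naturality on all of $\Ho(\A_1)$ follows from the collapse identification above. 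The one place that requires a little care — and the main (mild) obstacle — is making precise that ``isomorphism on fibrant-cofibrant objects implies isomorphism on $\Ho(\A_1)$'': one invokes that the localisation functor $\A_1\to\Ho(\A_1)$ inverts the weak equivalences $q^{\A_1}_X\colon QX\to X$ and $r^{\A_1}_{QX}\colon QX\to RQX$, so every object of $\Ho(\A_1)$ is isomorphic to $RQX$ with $RQX$ fibrant-cofibrant, and a natural transformation invertible on a skeleton-spanning family of objects is invertible. Alternatively, and perhaps more cleanly, one cites \cite[Lemma~2.2]{Shulman} or the coherence of $\mathbf{Ho}$ to transport the check to fibrant-cofibrant objects directly.
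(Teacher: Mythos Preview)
Your proposal is correct and follows essentially the same route as the paper: reduce the zig-zag on fibrant-cofibrant objects to the displayed composite via Remark~\ref{rem:fibrant-cofibrant objects}, invoke Theorem~\ref{thm:double_vs_triple} to pass between $\mathbf{ho}(\alpha)$ and $\mathbf{Ho}(\alpha)$, and use that every object of $\Ho(\A_1)$ is isomorphic to a fibrant-cofibrant one. The only cosmetic difference is that the paper outsources the ``isomorphism on fibrant-cofibrant objects suffices'' step to \cite[Rem.~7.2]{Shulman}, whereas you spell it out directly; your alternative citation to \cite[Lemma~2.2]{Shulman} at the end is slightly off, since that lemma concerns mates rather than the reduction to fibrant-cofibrant objects --- the reference you want is \cite[Rem.~7.2]{Shulman}.
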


\begin{proof}
From Remark \ref{rem:fibrant-cofibrant objects} we have that for any fibrant-cofibrant object $X$ in $\A_1$, the composite of zig-zags (\ref{eq:composite_map_triples}) in Definition \ref{def:Quillen_adjoint_triple} reduces to the composite displayed, which represents the isomorphism $\mathbf{ho}(\alpha)_{X}$. From Theorem \ref{thm:double_vs_triple} we have that $\mathbf{Ho}(\alpha)_{X}$ is an isomorphism too. Now we may apply \cite[Rem.~7.2]{Shulman}, which says that for any object $Y$ in $\A_1$,\, $\mathbf{Ho}(\alpha)_{Y}$ represents an isomorphism in $\Ho(\B_2)$ and apply once more Theorem \ref{thm:double_vs_triple} to deduce that $\mathbf{ho}(\alpha)_{Y}$ represents an isomorphism in $\Ho(\B_2)$.
\end{proof}

\begin{remark}
A nice aspect of the formalism of double categories given in this section and of the double pseudofunctor $\mathbf{Ho}\colon\underline{\mathit{Model}}\rightarrow\underline{\mathit{Cat}}$ is that we gain insight in composing Quillen adjoint triples or possibly more complicated structures, like Quillen quadruples, higher ladders of Quillen adjoints and so on. For instance, if $\mathrm{Q_{tr}}(\A_{1/2},\B_{1/2},\alpha\colon g\circ i\Rightarrow i'\circ f)$ and $\mathrm{Q_{tr}}(\B_{1/2},\C_{1/2},\beta\colon h\circ e\Rightarrow e'\circ g)$ are Quillen adjoint triples, we may splice together the 2-cells defined by $\alpha$ and $\beta$ to obtain a 2-cell $\gamma:=(1^e\boxminus \alpha)\boxbar \beta$, as indicated below:
\begin{equation}
\label{eq:alpha_in_double_cat}
\xymatrix@C=3pc{
\A_1\ar[r]^-{i}  \ar[d]_-{f}  \ddrtwocell\omit{\alpha} & \B_1 \ar[dd]^-{g} \ar[r]^-{e} & \C_1 \ar[ddd]^-{h}\\
\A_2   \ar[d]_{\mathrm{i'}}   &  \ddrtwocell\omit{\beta}  &  \\
\B_2 \ar[r]^-{\mathrm{id}}  \drtwocell\omit{1^{e'}} \ar[d]_-{e'} & \B_2 \ar[d]^-{e'} & \\
\C_2 \ar[r]_-{\mathrm{id}} & \C_2 \ar[r]_-{\mathrm{id}} & \C_2.
}\!
\end{equation}
We claim that this defines a Quillen adjoint triple $\mathrm{Q_{tr}}(\A_{1/2},\C_{1/2},\gamma)$. By Theorem \ref{thm:double_vs_triple} we have that $\Ho(\alpha)$ and $\Ho(\beta)$ are natural isomorphisms, thus it suffices to argue that the 2-cell defined by $\Ho(1^{e'})$ is also a natural isomorphism. For this it suffices to check that, for all fibrant-cofibrant objects $X$ in $\B_2$, the following composite is an isomorphism in $\Ho(\C_2)$, 
\[ e'Q_{\B_2}\mathrm{id}X \xrightarrow{e'q^{\B_2}_{\mathrm{id}X}} (e'\circ \mathrm{id})X\xrightarrow{1^{e'}_{X}}(\mathrm{id\circ e'})X\xrightarrow{\mathrm{id}(r_{e'X}^{\C_2})}\mathrm{id}R_{\C_2}e'X.\]
Since $X$ is fibrant-cofibrant in $\B_2$ it is easily checked that this composite is in fact the fibrant replacement $e'X\xrightarrow{r^{\C_2}_{e'X}}R_{\C_2}e'X$ of $e'X$ in $\C_2$, thus a weak equivalence.
\end{remark}

\section{Transfer of abelian model structures}
\label{sec:transfer}

In this section we study transfer (also called lifting) of abelian model structures along a left or a right adjoint. 
The terminology we use in this section is adapted from the general theory of model structures, in connection with right/left lifting of weak factorization systems and model structures, see for instance \cite{lifting}. Here, in view of Fact \ref{fact:wfs_vs_cot_pairs}, we adapt the discussion to cotorsion pairs. We prove a result on right-lifting in Theorem~\ref{thm:right_lifting} and one on left-lifting in Theorem~\ref{thm:left_lifting}.

We will make use of the following setup, which will be specialized further in the main results of this section.

\begin{ipg}\textbf{Setup}
\label{setup_lifting}
We consider a bicomplete abelian category $\M$ equipped with a Hovey triple $(\C_{\M},\W_{\M},\F_{\M})$, two bicomplete abelian categories $\D, \E$, and adjoint pairs $(q, i)$ and $(j, p)$ as follows$\colon$
\begin{equation}
\label{eq:transfer_setup}
\xymatrix@C=1pc{
\D \ar@/_1.2pc/[rr]_-{i} & \bot & \M\ar@/_1.2pc/[ll]_-{q}
} 
\\ \,\,\,\,\,\,\,\,\,\,\,\mbox{and}\,\,\,\,\,\,\,\,\,\,\,
\xymatrix@C=1pc{
\E \ar@/_1.2pc/[rr]_-{j} & \top & \M\ar@/_1.2pc/[ll]_-{p}
} 
\end{equation}
where the functors $i$ and $j$ are faithful and exact. 

We define the following classes of objects in $\D$:
\[\F_{\D}:=i^{-1}(\F_{\M}),\,\,\,\,\,\, \W_{\D}:=i^{-1}(\W_{\M}),\,\,\,\,\,\,\mbox{and}\,\,\,\,\,\, \C_{\D}:=\leftperp{(\F_{\D}\cap\W_{\D})},\]
which are the candidates for the fibrant, trivial, and cofibrant objects in $\D$ respectively.
We also define the following classes of morphisms in $\D$:
\[
\mathrm{fib}_{\D}:=i^{-1}(\mathrm{fib}_{\M}),\,\,\,\,\,\, \mathrm{weak}_{\D}:=i^{-1}(\mathrm{weak}_{\M})\,\,\,\,\,\,\mbox{and}
\]
\[
\mathrm{cof}_{\D}:={}^{\lifts}\left( \mathrm{fib}_{\D}\cap\mathrm{weak}_{\D}\right),
\]
which are candidates for the fibrations, weak equivalences and cofibrations in $\D$ respectively. 
Similarly, we define the following classes of objects in $\E$:
\[C_{\E}:=j^{-1}(\C_{\M}),\,\,\,\,\,\, \W_{\E}:=j^{-1}(\W_{\M})\,\,\,\,\,\,\mbox{and}\,\,\,\,\,\, \F_{\E}:=\rightperp{(\C_{\E}\cap\W_{\E})}.\]
and the following classes or morphisms in $\E$:
\[\mathrm{cof}_{\E}:=j^{-1}(\mathrm{cof}_{\M}),\,\,\,\,\,\, \mathrm{weak}_{\E}:=j^{-1}(\mathrm{weak}_{\M})\,\,\,\,\,\,\mbox{and}\]
\[\mathrm{fib}_{\E}:=\left( \mathrm{cof}_{\E}\cap \mathrm{weak}_{\E} \right)^{\square}.\]
\end{ipg}

Under Setup \ref{setup_lifting} we define the following notion.

\begin{definition}
\label{dfn:transfers}
We say that the \textbf{right-lifted abelian model structure} (along the right adjoint $i$) exists on $\D$ if $(\C_{\D},\W_{\D},\F_{\D})$ constitutes a Hovey triple on $\D$. Similarly, we say that the \textbf{left-lifted abelian model structure} (along the left adjoint $j$) exists on $\E$ if $(\C_{\E},\W_{\E},\F_{\E})$ constitutes a Hovey triple on $\E$.
\end{definition}

\begin{remark}
Lifting of model structures has been well-studied in the literature. In the case of combinatorial model structures (i.e. locally presentable and cofibrantly generated) the right-lifting goes back to Quillen, while the left-lifting is contained in work of Makkai-Rocisk\'y \cite{cellular}. More general results for accessible model structures were obtained in \cite{lifting}. For the results in this paper, we need the lifted model structures to be abelian (when they exist). To build the analogy with the aforementioned results on combinatorial model structures, we focus on cotorsion pairs generated by sets (since for a locally presentable abelian category, an abelian model structure is combinatorial if and only if its associated complete cotorsion pairs are each generated by a set \cite[Lemma~3.7]{SP}). 
\end{remark}

\begin{lemma}
\label{lemma_trivial_fibrations}
Keeping the same notation as in Setup \ref{setup_lifting}, we have the following:
\item[(i)] The class $\mathrm{fib}_{\D}$ consists precisely of the epimorphisms in $\D$ with kernel in $\F_{\D}$.
\item[(i')] The class $\mathrm{fib}_{\D}\cap\mathrm{weak}_{\D}$ consists precisely of the epimorphisms in $\D$ with kernel in $\F_{\D}\cap\W_{\D}$.
\item[(ii)] $D\in\C_{\D}$  if and only if $0\rightarrow D$ is in $(\mathrm{cof})_{\D}$.
\item[(ii')] $D\in\C_{\D}\cap\W_{\D}$ if and only if $0\rightarrow D$ is in $(\mathrm{cof})_{\D}\cap(\mathrm{weak})_{\D}$.
\item[(iii)] $\F_{\D}\subseteq \rightperp{q(\C_{\M}\cap\W_{\M})}$ and $\F_{\D}\cap\W_{\D}\subseteq \rightperp{q(\C_{\M})}$.
\item[(iv)] If $q$ maps cofibrations to monomorphisms in $\D$ then we obtain equalities in (iii).
\end{lemma}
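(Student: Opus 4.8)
\textbf{Plan for the proof of Lemma \ref{lemma_trivial_fibrations}.}

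The strategy throughout is to reduce each assertion about $\D$ to the corresponding known fact about the abelian model structure $(\C_{\M},\W_{\M},\F_{\M})$ on $\M$, using that $i$ is faithful, exact, and a right adjoint (so preserves kernels, monos, epis between the two categories). First I would record the basic observation that since $i$ is exact, $i$ sends a short exact sequence in $\D$ to a short exact sequence in $\M$; combined with faithfulness, a morphism $f$ in $\D$ is a monomorphism (resp.\ epimorphism) if and only if $i(f)$ is. For \textbf{(i)}, recall from the discussion after Definition \ref{dfn:abelian_model_structure} that the fibrations in the abelian model structure $\M$ are exactly the epimorphisms with kernel in $\F_{\M}$; so $f\in\mathrm{fib}_{\D}=i^{-1}(\mathrm{fib}_{\M})$ iff $i(f)$ is an epi with $\ker(i(f))\in\F_{\M}$, and by exactness $\ker(i(f))=i(\ker f)$, which lies in $\F_{\M}$ precisely when $\ker f\in i^{-1}(\F_{\M})=\F_{\D}$; combined with the mono/epi reflection this gives the claim. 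Assertion \textbf{(i$'$)} is identical, using that $\mathrm{weak}_{\M}\cap\mathrm{fib}_{\M}$ consists of the epis with kernel in $\W_{\M}\cap\F_{\M}$ and that $i^{-1}(\W_{\M})=\W_{\D}$ by definition.

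For \textbf{(ii)} and \textbf{(ii$'$)}, I would invoke Fact \ref{fact:wfs_vs_cot_pairs}: $\mathrm{cof}_{\D}$ is defined as ${}^{\lifts}(\mathrm{fib}_{\D}\cap\mathrm{weak}_{\D})$, and by part (i$'$) the right class $\mathrm{fib}_{\D}\cap\mathrm{weak}_{\D}$ is exactly the epimorphisms with kernel in $\F_{\D}\cap\W_{\D}$, i.e.\ the class $\mathrm{Epi}(\F_{\D}\cap\W_{\D})$ attached to the pair $(\leftperp{(\F_{\D}\cap\W_{\D})},\F_{\D}\cap\W_{\D})=(\C_{\D},\F_{\D}\cap\W_{\D})$. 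Provided we know this pair is a complete cotorsion pair in $\D$ — which is part of the standing hypothesis that we restrict to cotorsion pairs generated by a set, so that the relevant weak factorization system exists — Fact \ref{fact:wfs_vs_cot_pairs} identifies $\mathrm{cof}_{\D}={}^{\lifts}\mathrm{Epi}(\F_{\D}\cap\W_{\D})=\mathrm{Mono}(\C_{\D})$, the monomorphisms with cokernel in $\C_{\D}$. Then $0\rightarrow D$ lies in $\mathrm{cof}_{\D}$ iff $\coker(0\rightarrow D)=D\in\C_{\D}$, which is (ii). For (ii$'$), one needs in addition that $0\to D$ being a weak equivalence is equivalent to $D$ being trivial; since $\mathrm{weak}_{\D}=i^{-1}(\mathrm{weak}_{\M})$ and in $\M$ the map $0\to i(D)$ is a weak equivalence iff $i(D)\in\W_{\M}$ iff $D\in\W_{\D}$, this follows, and intersecting with (ii) gives the statement. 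The one point requiring care here is that $\mathrm{cof}_{\D}\cap\mathrm{weak}_{\D}$ genuinely coincides with $\mathrm{Mono}(\C_{\D}\cap\W_{\D})$, i.e.\ that a cofibration which is a weak equivalence has cokernel that is both cofibrant and trivial — this again reduces to the analogous statement in $\M$ via exactness of $i$, but should be spelled out.

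For \textbf{(iii)}, I would use the adjunction $(q,i)$: for $F\in\F_{\D}$ and $C\in\C_{\M}\cap\W_{\M}$, adjointness gives a natural isomorphism of Yoneda $\Ext^1$ groups obtained from $\Hom(-,-)$ by a standard dimension-shift argument, or more directly one observes that $\Ext^1_{\D}(q(C),F)$ can be computed via short exact sequences $0\to F\to E\to q(C)\to 0$ in $\D$; applying the exact functor $i$ and using the unit/counit of $q\dashv i$ together with $i(F)\in\F_{\M}$ and $C\in\C_{\M}\cap\W_{\M}=\leftperp{\F_{\M}}$ forces such sequences to split. Hence $q(C)\in\leftperp{\F_{\D}}$, i.e.\ $\F_{\D}\subseteq\rightperp{q(\C_{\M}\cap\W_{\M})}$; the second inclusion is the same argument with $\F_{\D}\cap\W_{\D}$ and $\C_{\M}=\leftperp{(\W_{\M}\cap\F_{\M})}$ in place of $\F_{\D}$ and $\C_{\M}\cap\W_{\M}$. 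Finally, for \textbf{(iv)}: the inclusions in (iii) are equalities provided every object of $\rightperp{q(\C_{\M}\cap\W_{\M})}$ lies in $\F_{\D}$. Given $D\in\rightperp{q(\C_{\M}\cap\W_{\M})}$, I would take the complete cotorsion pair $(\C_{\M},\W_{\M}\cap\F_{\M})$ in $\M$ and produce, after applying $q$ or working with a cofibrant approximation, a cofibration $0\rightarrowtail X$ in $\D$ with $X\in\C_{\D}$ together with an approximation sequence whose relevant term is killed by the orthogonality hypothesis once we know $q$ sends cofibrations to monomorphisms (so that the approximation sequences stay exact after transport through the adjunction); chasing this shows $i(D)\in\F_{\M}$, i.e.\ $D\in\F_{\D}$, and symmetrically for the trivial version.

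\textbf{Main obstacle.} I expect the genuinely delicate step to be part (iv): unlike (i)–(iii), whose content is purely formal manipulation of exactness and adjunction, (iv) asserts a \emph{reverse} inclusion and therefore requires producing enough short exact sequences in $\D$ — via approximations in $\M$ pushed through the adjoint $q$ — and then knowing those sequences remain exact, which is exactly where the hypothesis ``$q$ maps cofibrations to monomorphisms'' is consumed. Getting the bookkeeping of units, counits, and the two cotorsion pairs $(\C_{\M},\W_{\M}\cap\F_{\M})$ and $(\C_{\M}\cap\W_{\M},\F_{\M})$ exactly right, so that the orthogonality hypothesis on $D$ applies to the correct term, is the part that needs the most attention.
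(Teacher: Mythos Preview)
Your treatment of (i), (i$'$), and (iii) is essentially the paper's argument, and your reduction of (ii$'$) to (ii) together with ``$0\to D$ is a weak equivalence iff $D\in\W_{\D}$'' is fine. The genuine gap is in your proof of (ii).

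You invoke Fact~\ref{fact:wfs_vs_cot_pairs} to conclude $\mathrm{cof}_{\D}=\mathrm{Mono}(\C_{\D})$, but that fact applies only once you know $(\C_{\D},\F_{\D}\cap\W_{\D})$ is a \emph{complete} cotorsion pair in $\D$. You write that this ``is part of the standing hypothesis that we restrict to cotorsion pairs generated by a set''; it is not. Setup~\ref{setup_lifting} assumes nothing of the sort about $\D$ --- completeness of the transferred cotorsion pairs is precisely what Theorems~\ref{thm:right_lifting} and~\ref{thm:left_lifting} establish later, and their proofs use Lemma~\ref{lemma_trivial_fibrations} (in particular part (iv)). So your argument for (ii) is circular.

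The paper avoids this by arguing directly from the definition $\mathrm{cof}_{\D}={}^{\lifts}(\mathrm{fib}_{\D}\cap\mathrm{weak}_{\D})$, without any completeness hypothesis. Given $D\in\C_{\D}=\leftperp{(\F_{\D}\cap\W_{\D})}$ and a trivial fibration $f\colon X\twoheadrightarrow Y$ (an epimorphism with $\ker f\in\F_{\D}\cap\W_{\D}$ by (i$'$)), any square with left side $0\to D$ amounts to lifting a map $D\to Y$ along $f$; the obstruction lives in $\Ext^1_{\D}(D,\ker f)=0$, so the lift exists. Conversely, if $0\to D$ lies in $\mathrm{cof}_{\D}$, then for any extension $0\to K\to Y\to D\to 0$ with $K\in\F_{\D}\cap\W_{\D}$ the lifting property against the epimorphism $Y\twoheadrightarrow D$ produces a section, so the extension splits and $D\in\leftperp{(\F_{\D}\cap\W_{\D})}$. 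This is entirely elementary and needs no weak factorization system on $\D$.

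For (iv), your sketch is too vague to be wrong, but it is not the paper's argument and appears to head in a more complicated direction (approximations in $\D$). The paper works entirely in $\M$: given $Y\in\rightperp{q(\C_{\M}\cap\W_{\M})}$, take an arbitrary extension $0\to i(Y)\xrightarrow{j} X\to C\to 0$ in $\M$ with $C\in\C_{\M}\cap\W_{\M}$; the mono $j$ is a trivial cofibration, so by hypothesis $q(j)$ is a mono in $\D$, giving an exact sequence $0\to qi(Y)\to q(X)\to q(C)\to 0$. The orthogonality assumption on $Y$ (composed with the counit $qi(Y)\to Y$) splits this in $\D$, and the adjunction transfers the splitting back to show $j$ is split in $\M$. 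Hence $i(Y)\in\rightperp{(\C_{\M}\cap\W_{\M})}=\F_{\M}$. No approximations are needed; the hypothesis that $q$ preserves cofibrations as monomorphisms is consumed exactly once, to keep the sequence exact after applying $q$.
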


\begin{proof}
To prove (i) observe that if $f$ in $\D$ is a map such that $i(f)$ is an epi with kernel in $\F_{\M}$, then $f$ is an epi (since $i$ is faithful) and $\ker(i(f))=i\ker(f)$ (since $i$ is a right adjoint), thus $\ker(f)\in\F_{\D}$. Conversely, any epimorphism with kernel in $\F_{\D}$ is mapped to an epimorphism with kernel in $\F_{\M}$ since $i$ is exact. The proof of (i') is identical.

To prove (ii), let $D$ be in $\C_{\D}=\leftperp{(\F_{\D}\cap\W_{\D})}$. 
By definition, we have that $0\rightarrow D$ is in $(\mathrm{cof})_{\D}$ if and only if it has the left-lifting property with respect to morphisms in $(\mathrm{fib})_{\D}\cap(\mathrm{weak})_{\D}$, which from (i') are epimorphisms with kernel in $\F_{\D}\cap\W_{\D}$. But if $f\colon X\rightarrow Y$ is an epimorphism in $\D$ with $\ker(f)\in \F_{\D}\cap\W_{\D}$, the existence of the desirable dotted arrow
\begin{displaymath}
 \xymatrix@C=2pc{
 0 \ar[d] \ar[r] & X \ar@{->>}^-{f}[d] \\
  D \ar@{-->}[ur]\ar[r] & Y
}
\end{displaymath}
follows since $\Ext_{\D}^{1}(D,\ker(f))=0$. Conversely, assume that $0\rightarrow D$ is in $(\mathrm{cof})_{\D}$ and let $K\rightarrowtail Y\twoheadrightarrow D$ be a short exact sequence in $\D$ with $K\in\F_{\D}\cap\W_{\D}$. Then there exists a dotted map 
\begin{displaymath}
 \xymatrix@C=2pc{
 0 \ar[d] \ar[r] & Y \ar@{->>}[d] \\
  D \ar@{-->}[ur]\ar@{=}[r] & D;
  }
\end{displaymath}
which implies that $D\in\leftperp{(\F_{\D}\cap\W_{\D})}$. Similarly on can prove (ii').

(iii) Let $X\in\F_{\D}$ and consider a short exact sequence $0\rightarrow X\rightarrow Z\xrightarrow{\epsilon} q(C)\rightarrow 0$ for some $C\in\C_{\M}\cap\W_{\M}$. Note that by (i) the exact functor $i$ maps $\epsilon$ to a fibration in $\M$. We claim that the short exact sequence $0\rightarrow X\rightarrow Z\xrightarrow{\epsilon} q(C)\rightarrow 0$ splits if and only if there exists a dotted map $\delta$ making the following diagram commutative,
 \begin{displaymath}
 \xymatrix@C=2pc{
 0 \ar[d] \ar[r] & i(Z) \ar[d]^-{i(\epsilon)} \\
  C \ar@{-->}[ur]^-{\delta}\ar[r]^-{\eta_{C}} & iq(C),
}
\end{displaymath}
where $\eta_{C}$ is the unit map. Assume that the short exact sequence $0\rightarrow X\rightarrow Z\xrightarrow{\epsilon} q(C)\rightarrow 0$ splits and denote by $\zeta\colon q(C)\to Z$ a section of $\epsilon$. Then the desired $\delta$ is the composition $i(\zeta)\circ \eta_C$. Conversely, assume that we have a morphism $\delta$ making the above square commutative. Applying the functor $q$ we obtain the following commutative diagram
\[
 \xymatrix@C=2pc{
 0 \ar[d] \ar[r] & qi(Z) \ar[d]^-{qi(\epsilon)} \ar[r]^{\varepsilon_Z} & Z \ar[d]^{\epsilon} \\
  q(C) \ar[ur]^-{q(\delta)}\ar[r]^-{q(\eta_{C})} & qiq(C) \ar[r]^{\varepsilon_{q(C)}} & q(C),
}
\]
where $\varepsilon_Z$ is the counit map. Then it follows that $\epsilon\circ (\varepsilon_Z\circ q(\delta)) = \mathrm{id}_{q(C)}$ showing that the short exact sequence $X\rightarrowtail Z\twoheadrightarrow q(C)$ splits.

To finish the proof note that such a $\delta$ exists since $i(\epsilon)$ is a fibration and $0\rightarrow C$ is a trivial cofibration in the model category $\M$. The other inclusion is proved similarly and is left to the reader.

(iv) Let $Y\in \rightperp{q(\C_{\M}\cap\W_{\M})}$. We need to prove that $i(Y)$ belongs to the class $\F_{\M}=\rightperp{(\C_{\M}\cap\W_{\M})}$ in $\M$. Consider a short exact sequence $0\rightarrow i(Y)\xrightarrow{j} X\rightarrow C\rightarrow 0$ with $C\in\C_{\M}\cap\W_{\M}$. This is a trivial cofibration in $\M$, thus, by assumption, we obtain a commutative diagram with exact rows 
 \begin{displaymath}
 \xymatrix@C=2pc{
 qi(Y) \, \ar[d]_-{\varepsilon_{Y}} \ar@{>->}[r]^-{q(j)}\, & q(X) \ar@{-->}[dl]^-{\delta} \ar[d] \ar@{->>}[r] & q(C) \\
  Y \ar[r] & 0 & 
}
\end{displaymath}
where the map $\delta$ exists since $Y\in \rightperp{q(\C_{\M}\cap\W_{\M})}$. Applying the functor $i$ to the above diagram, we obtain by adjunction (same argument as in (iii)) that $j$ is a split monomorphism. 
The other equality is proved similarly. 
\end{proof}

Our next goal is to give necessary and sufficient conditions for the existence of right/left transferred abelian model structures. We have the following general result on Hovey triples.

\begin{proposition}
\label{prop_retract_argument}
Consider a $($bicomplete$)$\footnote{Bicompleteness is not really needed here but we include it since Hovey triples are defined on bicomplete abelian categories, as in Definition \ref{def:Hovey_triples}.} abelian category $\class D$ with three classes of objects $\C_{D},\W_{\D}$ and $\F_{\D}$, where $\W_{\D}$ is thick and closed under summands. Then  $(\C_{\D},\W_{\D},\F_{\D})$ is a Hovey triple on $\D$ if and only if the following hold: 
\begin{itemize}
\item[(i)] The pairs $(\C_{\D},\W_{\D}\cap\F_{\D})$ and $(\leftperp{\F_{\D}},\F_{\D})$ are complete cotorsion pairs in $\D$.
\item[(ii)] The inclusion $\leftperp{\F_{\D}}\subseteq \W_{\D}$ holds.
\end{itemize}
\end{proposition}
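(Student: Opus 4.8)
The plan is to reduce everything to Hovey's theorem (\cite[Thm.~2.2]{hovey2002}, recalled above), which under the standing hypothesis that $\W_\D$ is thick and closed under summands says that $(\C_\D,\W_\D,\F_\D)$ is a Hovey triple on $\D$ if and only if both $(\C_\D\cap\W_\D,\F_\D)$ and $(\C_\D,\W_\D\cap\F_\D)$ are complete cotorsion pairs in $\D$. So it is enough to show that this pair of conditions is equivalent to the conjunction of (i) and (ii).

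For the forward implication I would simply unwind the cotorsion pair $(\C_\D\cap\W_\D,\F_\D)$: it gives $\C_\D\cap\W_\D=\leftperp{\F_\D}$, so $(\leftperp{\F_\D},\F_\D)$ is literally the same complete cotorsion pair and $\leftperp{\F_\D}=\C_\D\cap\W_\D\subseteq\W_\D$; together with the complete cotorsion pair $(\C_\D,\W_\D\cap\F_\D)$ this is exactly (i) and (ii). This direction is formal.

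The substance is the converse. Assuming (i) and (ii) I already have $(\C_\D,\W_\D\cap\F_\D)$ as one of the two complete cotorsion pairs required by Hovey's theorem, so it remains to show that $(\C_\D\cap\W_\D,\F_\D)$ is a complete cotorsion pair; since (i) supplies the complete cotorsion pair $(\leftperp{\F_\D},\F_\D)$, it suffices to prove $\leftperp{\F_\D}=\C_\D\cap\W_\D$. The inclusion $\leftperp{\F_\D}\subseteq\C_\D\cap\W_\D$ is immediate: from $\W_\D\cap\F_\D\subseteq\F_\D$ we get $\leftperp{\F_\D}\subseteq\leftperp{(\W_\D\cap\F_\D)}=\C_\D$, and $\leftperp{\F_\D}\subseteq\W_\D$ is precisely (ii). For the reverse inclusion I would take $M\in\C_\D\cap\W_\D$ and use completeness of $(\leftperp{\F_\D},\F_\D)$ to produce a short exact sequence $0\to F\to C\to M\to 0$ with $F\in\F_\D$ and $C\in\leftperp{\F_\D}$. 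Then $C\in\leftperp{\F_\D}\subseteq\W_\D$ and $M\in\W_\D$, so thickness of $\W_\D$ forces $F\in\W_\D\cap\F_\D$; since $M\in\C_\D=\leftperp{(\W_\D\cap\F_\D)}$ we get $\Ext^1_\D(M,F)=0$, hence the sequence splits and $M$ is a direct summand of $C\in\leftperp{\F_\D}$. As the left-hand class of a cotorsion pair is closed under direct summands, $M\in\leftperp{\F_\D}$. This finishes the identification $\C_\D\cap\W_\D=\leftperp{\F_\D}$, and hence, via Hovey's theorem, the proof.

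The main obstacle — indeed essentially the only non-formal point — is the splitting step in the converse: it is exactly there that hypothesis (ii) together with thickness of $\W_\D$ is used to upgrade the generic $\leftperp{\F_\D}$-approximation of $M$ into an honest direct-sum decomposition, and without (ii) the argument collapses.
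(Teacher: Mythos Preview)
Your proof is correct and follows the same overall strategy as the paper: reduce to Hovey's theorem by showing $\C_\D\cap\W_\D=\leftperp{\F_\D}$, get the inclusion $\supseteq$ from (ii) and the trivial containment $\W_\D\cap\F_\D\subseteq\F_\D$, and then establish $\subseteq$ via the approximation coming from completeness of $(\leftperp{\F_\D},\F_\D)$ together with thickness of $\W_\D$.

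The only divergence is in how the inclusion $\C_\D\cap\W_\D\subseteq\leftperp{\F_\D}$ is finished. You take the approximation $0\to F\to C\to M\to 0$, show this one sequence splits, and conclude via closure of $\leftperp{\F_\D}$ under summands. The paper instead takes an \emph{arbitrary} extension $0\to F\to Y\to X\to 0$ with $F\in\F_\D$ and shows it splits directly, by pulling back along the approximation $C\twoheadrightarrow X$ to obtain a diagram in which the rightmost column splits (since its kernel lands in $\W_\D\cap\F_\D$ by thickness) and the middle row splits by construction, forcing the original extension to split. Your route is a touch shorter and avoids the pullback diagram; the paper's route verifies $\Ext^1_\D(X,\F_\D)=0$ head-on without invoking closure under summands. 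Both are standard and equally valid.
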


\begin{proof}
If $(\C_{\D},\W_{\D},\F_{\D})$ is a Hovey triple on $\D$, then conditions (i) and (ii) follow immediately by Definition~\ref{def:Hovey_triples}. The non-trivial implication is to show that $(\C_{\D},\W_{\D},\F_{\D})$ is a Hovey triple on $\class D$ assuming conditions (i) and (ii). For this it suffices to show that $\C_{\D}\cap\W_{\D}=\leftperp{\F_{\D}}$. The inclusion $\supseteq$ follows from condition (ii) together with the observation that $\F_{\D}\cap\W_{\D}\subseteq\F_{\D}$ implies $\leftperp{\F_{\D}}\subseteq\leftperp{(\F_{\D}\cap\W_{\D})}=\C_{\D}$. For the converse inclusion, let $X\in\C_{\D}\cap\W_{D}$ and consider a short exact sequence $F\rightarrowtail Y\twoheadrightarrow X$ in $\D$, with $F\in\F_{\D}$. We consider the pullback diagram,
\begin{displaymath}
 \xymatrix@C=2pc{
  & Z \ar@{=}[r] \ar@{>->}[d] & Z \ar@{>->}[d] \\
  F\,\, \ar@{>->}[r] \ar@{=}[d] & P \ar@{->>}[r] \ar@{->>}[d] & C\ar@{->>}[d]\\
  F\,\, \ar@{>->}[r] & Y\ar@{->>}[r] & X ,
 }
 \end{displaymath}
where in the rightmost vertical sequence $C\in\leftperp{\F_{\D}}$ and $Z\in\F_{\D}$ (this makes use of the completeness of $(\leftperp{\F_{\D}},\F_{\D})$). Since $\leftperp{\F_{\D}}\subseteq \W_{\D}$, $X$ is in $\W_{\D}$ and the class $\W_{\D}$ is thick we deduce that $Z\in\F_{\D}\cap\W_{\D}$. In particular, the rightmost vertical sequence splits since $(\C_{\D},\W_{\D}\cap\F_{\D})$ is a cotorsion pair. Moreover the middle horizontal sequence splits by construction. We infer that the bottom sequence splits.
\end{proof}

Following \cite{lifting} we call the inclusion $\leftperp{\F_{\D}}\subseteq \W_{\D}$ \textbf{the right acyclicity condition}. For completeness we state the dual result where its proof is left to the reader.

\begin{proposition}
\label{prop_retract_argument_dual}
Consider a $($bicomplete$)$ abelian category $\class E$ with three classes of objects $\C_{\E},\W_{\E}$ and $\F_{\E}$, where $\W_{\E}$ is thick and closed under summands. Then  $(\C_{\E},\W_{\E},\F_{\E})$ is a Hovey triple on $\E$ if and only if the following hold: 
\begin{itemize}
\item[(i)] The pairs $(\C_{\E}\cap\W_{\E}, \F_{\E})$ and $(\C_{\E},\rightperp{\C}_{\E})$ are complete cotorsion pairs in $\E$.
\item[(ii)] The inclusion $\rightperp{\C}_{\E}\subseteq \W_{\E}$ holds.
\end{itemize}
\end{proposition}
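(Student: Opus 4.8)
The plan is to deduce Proposition~\ref{prop_retract_argument_dual} from Proposition~\ref{prop_retract_argument} by a straightforward dualization, since all the ingredients—Hovey triples, cotorsion pairs, thickness and closure under summands, completeness—are self-dual notions once one passes to the opposite category. Concretely, I would observe that a Hovey triple $(\C_{\E},\W_{\E},\F_{\E})$ on $\E$ is the same datum as a Hovey triple $(\F_{\E}^{\op},\W_{\E}^{\op},\C_{\E}^{\op})$ on $\E^{\op}$, because Fact~\ref{fact:wfs_vs_cot_pairs} and Hovey's theorem are compatible with the duality $(\A,\B)\mapsto(\B^{\op},\A^{\op})$ on cotorsion pairs, and because ``$\W$ thick and closed under summands'' is a self-dual condition. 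Applying Proposition~\ref{prop_retract_argument} in $\E^{\op}$ to the triple $(\F_{\E}^{\op},\W_{\E}^{\op},\C_{\E}^{\op})$ then yields exactly conditions (i) and (ii) of the present statement, once one translates $\rightperp{(-)}$ in $\E$ into $\leftperp{(-)}$ in $\E^{\op}$ and conversely.

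The one genuinely substantive point to check is that the forward (trivial) direction and the retract argument both survive the translation. For the forward direction this is immediate from Definition~\ref{def:Hovey_triples}: if $(\C_{\E},\W_{\E},\F_{\E})$ is a Hovey triple then by definition the pairs $(\C_{\E}\cap\W_{\E},\F_{\E})$ and $(\C_{\E},\W_{\E}\cap\F_{\E})$ are complete cotorsion pairs, and since $\C_{\E}\cap\W_{\E}=\rightperp{\F_{\E}}$—which one gets from the characterization of the cofibrant objects together with the cotorsion pair $(\C_{\E},\rightperp{\C_{\E}})$—one reads off $\W_{\E}\cap\F_{\E}=\rightperp{\C_{\E}}$, giving (i), and $\rightperp{\C_{\E}}=\W_{\E}\cap\F_{\E}\subseteq\W_{\E}$, giving (ii). The converse is the dual of the pullback/retract argument in Proposition~\ref{prop_retract_argument}: assuming (i) and (ii), it suffices to show $\F_{\E}\cap\W_{\E}=\rightperp{\C_{\E}}$; the inclusion $\subseteq$ comes from (ii) combined with $\C_{\E}\cap\W_{\E}\subseteq\C_{\E}$ forcing $\rightperp{\C_{\E}}\subseteq\rightperp{(\C_{\E}\cap\W_{\E})}=\F_{\E}$, and for $\supseteq$ one takes $X\in\F_{\E}\cap\W_{\E}$, a short exact sequence $X\rightarrowtail Y\twoheadrightarrow C$ with $C\in\C_{\E}$, forms the pushout along a resolution $K\rightarrowtail Z\twoheadrightarrow X$ with $K\in\rightperp{\C_{\E}}$, $Z\in\C_{\E}$ (using completeness of $(\C_{\E},\rightperp{\C_{\E}})$), and uses thickness of $\W_{\E}$ to conclude $Z\in\C_{\E}\cap\W_{\E}$, so the relevant extension splits.

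Since the statement explicitly says the proof is left to the reader, I would keep the write-up to a single sentence pointing out that it is formally dual to Proposition~\ref{prop_retract_argument}, perhaps flagging that ``the right acyclicity condition $\leftperp{\F_{\D}}\subseteq\W_{\D}$'' is replaced by its dual ``left acyclicity condition'' $\rightperp{\C_{\E}}\subseteq\W_{\E}$. The only mild obstacle—more a bookkeeping hazard than a mathematical one—is keeping track of which orthogonality operator $\leftperp{(-)}$ versus $\rightperp{(-)}$ appears where after dualizing, and making sure that the completeness hypotheses are stated on the correct side; none of this requires any new idea beyond Propositions~\ref{prop_retract_argument} and the self-duality of the notions involved.
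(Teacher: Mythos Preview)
Your overall plan is exactly what the paper intends: the statement is formally dual to Proposition~\ref{prop_retract_argument}, and the paper itself leaves the proof to the reader on those grounds. So the approach is correct and matches the paper.

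That said, your sketch of the dualized retract argument contains bookkeeping slips of precisely the kind you warned yourself about. First, your inclusion labels are swapped: the argument ``$\C_{\E}\cap\W_{\E}\subseteq\C_{\E}$ forces $\rightperp{\C_{\E}}\subseteq\rightperp{(\C_{\E}\cap\W_{\E})}=\F_{\E}$, and (ii) gives $\rightperp{\C_{\E}}\subseteq\W_{\E}$'' establishes $\rightperp{\C_{\E}}\subseteq\F_{\E}\cap\W_{\E}$, i.e.\ the inclusion $\supseteq$ in $\F_{\E}\cap\W_{\E}=\rightperp{\C_{\E}}$, not $\subseteq$. Second, and more substantively, for the remaining inclusion you cannot ``form the pushout along a resolution $K\rightarrowtail Z\twoheadrightarrow X$'': there is no sensible pushout of $X\rightarrowtail Y$ along an epimorphism \emph{into} $X$. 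The correct dual of the pullback step in Proposition~\ref{prop_retract_argument} uses a special \emph{preenvelope} $X\rightarrowtail F\twoheadrightarrow Z$ with $F\in\rightperp{\C_{\E}}$ and $Z\in\C_{\E}$ (coming from completeness of $(\C_{\E},\rightperp{\C_{\E}})$), and then the pushout of $Y\leftarrowtail X\rightarrowtail F$. Thickness of $\W_{\E}$ together with (ii) gives $Z\in\C_{\E}\cap\W_{\E}$; the left column $X\rightarrowtail F\twoheadrightarrow Z$ splits because $X\in\F_{\E}$ and $(\C_{\E}\cap\W_{\E},\F_{\E})$ is a cotorsion pair; the middle row splits because $F\in\rightperp{\C_{\E}}$; hence the top row $X\rightarrowtail Y\twoheadrightarrow C$ splits. (Also, in your forward direction, the aside ``$\C_{\E}\cap\W_{\E}=\rightperp{\F_{\E}}$'' is both unnecessary and has the wrong orthogonal; $\W_{\E}\cap\F_{\E}=\rightperp{\C_{\E}}$ follows directly from $(\C_{\E},\W_{\E}\cap\F_{\E})$ being a cotorsion pair.) If, as you say, the final write-up is a one-line appeal to duality, none of this matters; but the sketch as written would not compile into a correct proof.
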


The inclusion $\rightperp{\C}_{\E}\subseteq \W_{\E}$ is called \textbf{the left acyclicity condition}. The following definition will be useful.

\begin{definition}
\label{dfn:transfer_of_cot_pairs}
Let $(\X,\Y)$ be a (functorial) complete cotorsion pair in an abelian category $\M$, and assume that we are given adjoint pairs $q\colon\M\leftrightarrows\D\colon i$ and $j\colon\E\leftrightarrows\M\colon p$ between abelian categories, where $i$ (resp., $j$) is a right (resp., left) adjoint. We say that the \textbf{right-lifted} (functorial) complete cotorsion pair exists in $\D$ \textit{if} $(\leftperp({i^{-1}\Y)},i^{-1}\Y)$ is such a cotorsion pair in $\D$. Dually, we say that the \textbf{left-lifted} (functorial) complete cotorsion pair exists in $\E$ \textit{if} $(j^{-1}(\X),\rightperp{j^{-1}(\X)})$ is such a cotosion pair in $\E$.
\end{definition}

The next result relates liftings of model structures to liftings of complete cotorsion pairs. It follows at once from Propositions \ref{prop_retract_argument} and \ref{prop_retract_argument_dual} together with the observation that $\W_{\D}$ and $\W_{\E}$  from Setup \ref{setup_lifting}  are thick and closed under summands (as preimages of $i$ and $j$ respectively). 

\begin{corollary}
\label{cor:lifting_nec_suf}
Under Setup \ref{setup_lifting}, the right-lifted abelian model structure on $\D$ exists if and only if the right-lifted functorial complete cotorsion pairs $(\C_{\D},\W_{\D}\cap\F_{\D})$ and $(\leftperp{\F_{\D}},\F_{\D})$ exist and the right acyclicity condition  $\leftperp{\F_{\D}}\subseteq \W_{\D}$ holds.

Similarly, the left-lifted abelian model structure on $\E$ exists if and only if the left-lifted functorial complete cotorsion pairs $(\C_{\E}\cap\W_{\E},\F_{\E})$ and $(\C_{\E},\rightperp{\C}_{\E})$ exist and the left acyclicity condition $\rightperp{\C}_{\E}\subseteq \W_{\E}$ holds.
\end{corollary}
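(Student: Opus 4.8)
The plan is to obtain the statement directly from Propositions~\ref{prop_retract_argument} and~\ref{prop_retract_argument_dual}; essentially all that is needed is to verify the standing hypothesis of those propositions and then to match their remaining hypotheses with the data of Setup~\ref{setup_lifting} and Definition~\ref{dfn:transfer_of_cot_pairs}. So first I would check that the prospective class of trivial objects $\W_{\D}=i^{-1}(\W_{\M})$ is thick and closed under summands. This is exactly where exactness and faithfulness of $i$ from Setup~\ref{setup_lifting} are used: applying the exact functor $i$ to a short exact sequence in $\D$ produces a short exact sequence in $\M$, so if two of its terms lie in $\W_{\D}$ then their images, and hence the image of the third term, lie in $\W_{\M}$---which is thick by Hovey's theorem, recalled after Fact~\ref{fact:wfs_vs_cot_pairs}---whence the third term lies in $\W_{\D}$; closure under summands is seen the same way. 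The identical argument handles $\W_{\E}=j^{-1}(\W_{\M})$.

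Next I would unwind Definition~\ref{dfn:transfer_of_cot_pairs} for the two hereditary complete cotorsion pairs $(\C_{\M},\W_{\M}\cap\F_{\M})$ and $(\C_{\M}\cap\W_{\M},\F_{\M})$ of the given Hovey triple on $\M$. Since $i^{-1}$ preserves finite intersections, $i^{-1}(\W_{\M}\cap\F_{\M})=\W_{\D}\cap\F_{\D}$ and $i^{-1}(\F_{\M})=\F_{\D}$, so the two right-lifted cotorsion pairs are precisely $(\leftperp{(\W_{\D}\cap\F_{\D})},\W_{\D}\cap\F_{\D})=(\C_{\D},\W_{\D}\cap\F_{\D})$---by the very definition $\C_{\D}:=\leftperp{(\F_{\D}\cap\W_{\D})}$ in Setup~\ref{setup_lifting}---and $(\leftperp{\F_{\D}},\F_{\D})$. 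Hence the assertion that these right-lifted functorial complete cotorsion pairs exist in $\D$ is literally condition~(i) of Proposition~\ref{prop_retract_argument}, while the right acyclicity condition $\leftperp{\F_{\D}}\subseteq\W_{\D}$ is condition~(ii). Since, by Definition~\ref{dfn:transfers}, the right-lifted abelian model structure exists on $\D$ if and only if $(\C_{\D},\W_{\D},\F_{\D})$ is a Hovey triple on $\D$, the first equivalence follows at once from Proposition~\ref{prop_retract_argument} together with the first step. The second equivalence is entirely dual: replacing $i,\D$ by $j,\E$ and again using that $j^{-1}$ preserves finite intersections, the left-lifted cotorsion pairs of $(\C_{\M}\cap\W_{\M},\F_{\M})$ and $(\C_{\M},\W_{\M}\cap\F_{\M})$ are $(\C_{\E}\cap\W_{\E},\rightperp{(\C_{\E}\cap\W_{\E})})=(\C_{\E}\cap\W_{\E},\F_{\E})$ and $(\C_{\E},\rightperp{\C_{\E}})$, so Proposition~\ref{prop_retract_argument_dual} applies verbatim.

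I do not expect any genuine obstacle here; the step I would flag as deserving a moment's care is the bookkeeping of the second paragraph---identifying the classes named in Definition~\ref{dfn:transfer_of_cot_pairs} with those fixed in Setup~\ref{setup_lifting}, in particular the equality $\leftperp{(i^{-1}(\W_{\M}\cap\F_{\M}))}=\C_{\D}$, which rests only on preservation of finite intersections under $i^{-1}$ and $j^{-1}$---together with the remark that functoriality of the lifted cotorsion pairs is part of what Definition~\ref{dfn:transfer_of_cot_pairs} records, so that the functorial hypotheses required to speak of Hovey triples are automatically in place. After this, the corollary is a direct citation of Propositions~\ref{prop_retract_argument} and~\ref{prop_retract_argument_dual}.
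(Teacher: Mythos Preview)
Your proposal is correct and follows essentially the same route as the paper: the paper simply remarks that the corollary ``follows at once from Propositions~\ref{prop_retract_argument} and~\ref{prop_retract_argument_dual} together with the observation that $\W_{\D}$ and $\W_{\E}$ from Setup~\ref{setup_lifting} are thick and closed under summands (as preimages of $i$ and $j$ respectively).'' Your write-up just unpacks this in more detail, including the bookkeeping identifying the lifted cotorsion pairs of Definition~\ref{dfn:transfer_of_cot_pairs} with the classes named in Setup~\ref{setup_lifting}; one small remark is that faithfulness of $i$ is not actually needed for thickness or closure under summands of $\W_{\D}$---exactness (hence additivity) of $i$ together with the definition of $\W_{\D}$ as a preimage suffices.
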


We point out the following motivational observation:

\begin{lemma}
\label{lem:motiv}
If the right-lifted abelian model structure on $\D$ exists, the adjunction $i\colon \D \rightleftarrows \M\colon q$ from Setup \ref{setup_lifting} is a Quillen adjunction.
\end{lemma}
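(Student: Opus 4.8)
We must show that under Setup~\ref{setup_lifting}, once the right-lifted model structure $(\C_{\D},\W_{\D},\F_{\D})$ is known to be a genuine Hovey triple on $\D$, the adjunction $i\colon\D\rightleftarrows\M\colon q$ is a Quillen adjunction. By \cite[Lem.~1.3.4]{hoveybook} it suffices to check that the right adjoint $i$ is right Quillen, i.e.\ that $i$ sends fibrations to fibrations and trivial fibrations to trivial fibrations. The idea is simply that fibrations are detected by $i$ essentially \emph{by construction}: recall from Setup~\ref{setup_lifting} that $\mathrm{fib}_{\D}=i^{-1}(\mathrm{fib}_{\M})$ and $\mathrm{weak}_{\D}=i^{-1}(\mathrm{weak}_{\M})$, so preservation of fibrations and of weak equivalences under $i$ is immediate from these definitions. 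The only genuine point is that these classes $\mathrm{fib}_{\D},\mathrm{weak}_{\D}$ really \emph{are} the fibrations and weak equivalences of the transferred model structure on $\D$ — which is exactly what ``the right-lifted abelian model structure exists'' buys us, since the fibrant objects of that structure are by definition $\F_{\D}=i^{-1}(\F_{\M})$ and the trivial objects are $\W_{\D}=i^{-1}(\W_{\M})$.

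First I would spell out the identification of fibrations in the transferred model structure: in any abelian model structure the fibrations are precisely the epimorphisms with fibrant kernel, and the trivial fibrations are the epimorphisms with trivially-fibrant kernel (as noted just after Definition~\ref{dfn:abelian_model_structure}). So a map $f$ in $\D$ is a fibration in $\D_{\pi}$ iff it is an epimorphism with $\ker f\in\F_{\D}=i^{-1}(\F_{\M})$, and a trivial fibration iff it is an epimorphism with $\ker f\in\F_{\D}\cap\W_{\D}=i^{-1}(\F_{\M}\cap\W_{\M})$; this is precisely the content of Lemma~\ref{lemma_trivial_fibrations}(i) and (i'). Next, given such an $f$, apply $i$: since $i$ is exact it preserves epimorphisms and commutes with kernels, so $i(f)$ is an epimorphism in $\M$ with $\ker(i(f))\cong i(\ker f)\in\F_{\M}$ (respectively $\in\F_{\M}\cap\W_{\M}$). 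Hence $i(f)$ is a fibration (respectively a trivial fibration) in $\M$. That establishes that $i$ is right Quillen, and therefore the adjunction $(q,i)$ is a Quillen adjunction.

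Alternatively, and perhaps more transparently, one can argue directly on the left adjoint $q$: a map $g$ in $\D$ is a (trivial) cofibration iff it has the left lifting property against all (trivial) fibrations of $\D_{\pi}$, i.e.\ against $i^{-1}(\mathrm{fib}_{\M})$ (resp.\ against $i^{-1}$ of the trivial fibrations), and by adjunction a lifting problem of $g$ against $i(h)$ transposes to one of $q(g)$ against $h$; so $q$ sends (trivial) cofibrations of $\D_\pi$ to maps with the left lifting property against all (trivial) fibrations of $\M$, which are exactly the (trivial) cofibrations of $\M$. Either route is a short formal verification. The main — and really the only — obstacle is conceptual rather than technical: one must be careful that ``existence of the right-lifted model structure'' is exactly the hypothesis that makes $\mathrm{fib}_{\D}=i^{-1}(\mathrm{fib}_{\M})$ and $\mathrm{weak}_{\D}=i^{-1}(\mathrm{weak}_{\M})$ the actual fibrations and weak equivalences of a model structure on $\D$; granting that, no nontrivial computation remains, and exactness plus faithfulness of $i$ together with Lemma~\ref{lemma_trivial_fibrations} finish the argument.
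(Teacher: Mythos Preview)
Your main argument is correct and is essentially the paper's proof: both identify the (trivial) fibrations of the right-lifted structure as epimorphisms with kernel in $\F_{\D}$ (resp.\ $\F_{\D}\cap\W_{\D}$) via Lemma~\ref{lemma_trivial_fibrations}(i),(i'), and then use exactness of $i$ to conclude that $i$ preserves them, hence is right Quillen. The paper phrases the identification through Fact~\ref{fact:wfs_vs_cot_pairs} rather than the remark after Definition~\ref{dfn:abelian_model_structure}, but this is the same content.

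One slip in your alternative route: you have the direction of $q$ reversed. In Setup~\ref{setup_lifting} the adjunction is $q\dashv i$ with $q\colon\M\to\D$, so the statement ``$q$ sends (trivial) cofibrations of $\D_\pi$ to \ldots'' is not well-typed. The correct version of that argument shows $q$ is left Quillen as a functor $\M\to\D_\pi$: for $g$ a (trivial) cofibration in $\M$ and $f$ any (trivial) fibration in $\D_\pi$, a lifting problem of $q(g)$ against $f$ transposes by adjunction to one of $g$ against $i(f)$, and the latter has a solution since $i(f)\in\mathrm{fib}_{\M}$ (resp.\ $\mathrm{fib}_{\M}\cap\mathrm{weak}_{\M}$) by the very definition of $\mathrm{fib}_{\D}$. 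This does not affect the validity of your primary argument.
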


\begin{proof}
From Fact \ref{fact:wfs_vs_cot_pairs} we obtain two abelian weak factorization systems\\ $(\mathrm{Mono}(\C_{\D}),\mathrm{Epi}(\W_{\D}\cap\F_{\D}))$ and $(\mathrm{Mono}(\C_{\D}\cap\W_{\D}),\mathrm{Epi}(\F_{\D}))$ where  the classes $\mathrm{Epi}(\F_{\D})$  and $\mathrm{Epi}(\W_{\D}\cap\F_{\D})$ are the fibrations and trivial fibrations of the model structure on $\D$. By Lemma \ref{lemma_trivial_fibrations} these are the classes $\mathrm{fib}_{\D}$ and $\mathrm{fib}_{\D}\cap\mathrm{weak}_{\D}$ respectively, which are mapped to fibrations and trivial fibrations respectively in $\M$, by construction. Hence $i\colon \D\rightarrow \M$ is right Quillen.
\end{proof}

Of course the dual of Lemma \ref{lem:motiv} about left-lifting abelian model structures holds and can be proved with similar arguments.

Before we continue with the main results of this section, we need to recall the following useful notion of filtered objects and some related structural results.

\begin{definition}(\cite[Definition~6.1]{GT})
\label{def:continuous}
Let $\A$ be a Grothendieck category and let $\class{X}$ be a class of objects in $\A$. An object $X$ of $\A$ is called $\class{X}$--\textbf{filtered} if there exists a well ordered system $(X_{\alpha}, f_{\beta\alpha}\colon X_{\alpha}\rightarrow X_{\beta} \ | \ \alpha<\beta \leqslant\lambda)$ in $\A$, indexed by an ordinal number $\lambda$, such that the following hold:
\begin{itemize}
\item[(i)] $X_0=0$ and for each limit ordinal $\sigma\leqslant\lambda$, $X_{\sigma}=\lim\limits_{\longrightarrow}\phantom{}_{\alpha<\sigma} X_{\alpha}$.
\item[(ii)] For all $\alpha<\beta\leqslant\lambda$, the morphisms $f_{\beta\alpha}\colon X_{\alpha}\to X_{\beta}$ are monomorphisms.
\item[(iii)] For all $\alpha<\lambda$, $\Coker{f_{\alpha+1,\alpha}}$ lies in $\class{X}$.
\item[(iv)] $X_{\lambda}=X$.
\end{itemize}
We denote the class of the $\X$--filtered objects in $\A$ by $\mathrm{Filt}(\X)$. In case $\X$ is a (small) set of objects in $\A$ such classes are called \textbf{deconstructible}.
\end{definition}

\begin{proposition}\textnormal{(see \cite[Theorems~5.16,~5.17,~5.18)]{Stoviceksurvey}}
\label{prop:deconstruct}
Let $\A$ be a Grothendieck category. Then the following hold:
\begin{itemize}
\item[(i)] If $\class X$ is a (small) set of objects in $\A$ such that $\mathrm{Filt}(\class X)$ contains a generator, then $(\leftperp{(\rightperp{\class X})},\rightperp{\class X})$ is a complete cotorsion pair in $\A$, and the class $\leftperp{(\rightperp{\class X})}$ consists of summands of direct sums of objects from the class $\Filt(\class X)$.

\item[(ii)] If $(\X,\Y)$ is a complete cotorsion pair which is generated by a set of objects in $\A$, then the class $\X$ is deconstructible (i.e., a class of the form $\Filt(\class S)$ where $\class{S}$ is some set of objects in $\A$), it is closed under retracts and contains a generator; in fact we have $(\X,\Y)=(\Filt(\class S),\rightperp{\Filt(\class S)})=(\leftperp{(\rightperp{\class S})},\rightperp{\class S})$.
\end{itemize}
In particular, for any deconstructible class $\Filt(\class S)$ in $\A$ which is closed under retracts and contains a generator, the pair $(\Filt(\class S),\rightperp{\Filt(\class S)})$ is a complete cotorsion pair in $\A$.
\end{proposition}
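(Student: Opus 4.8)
The plan is to assemble the statement from the three cited results of \v{S}\v{t}ov\'i\v{c}ek's survey, using three standard ingredients: Eklof's lemma in Grothendieck categories (which gives $\Filt(\X)\subseteq\leftperp{(\rightperp{\X})}$ for any class $\X$, and hence $\rightperp{\Filt(\X)}=\rightperp{\X}$); the set-theoretic small object argument (which, under a generator hypothesis, produces the complete cotorsion pair cogenerated by a set and identifies its left-hand class with retracts of transfinite filtrations); and the Hill lemma / Kaplansky-type deconstructibility theorem (which upgrades ``retract of a filtration'' to ``filtration'' after possibly enlarging the generating set). For part (i) I would invoke \cite[Thm.~5.16,~5.17]{Stoviceksurvey} directly: the hypothesis that $\Filt(\X)$ contains a generator is exactly what is needed so that $(\leftperp{(\rightperp{\X})},\rightperp{\X})$ is a complete cotorsion pair, and the description of $\leftperp{(\rightperp{\X})}$ as summands of direct sums of $\Filt(\X)$-objects is part of that statement (a direct sum of $\Filt(\X)$-filtrations is again a $\Filt(\X)$-filtration, and one appeals to Eklof's lemma for the inclusion $\Filt(\X)\subseteq\leftperp{(\rightperp{\X})}$).

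For part (ii) I would start by unravelling ``generated by a set $\class S$'': this means $\Y=\rightperp{\class S}$, so $\X=\leftperp{\Y}=\leftperp{(\rightperp{\class S})}$. Completeness is assumed, and from it one sees that $\X$ contains a generator of $\A$ (apply a special $\X$-precover to a generator of the Grothendieck category $\A$; the middle term is again a generator); moreover $\X$, being the left-hand class of a cotorsion pair, is automatically closed under retracts by additivity of $\Ext^{1}$. Now \cite[Thm.~5.18]{Stoviceksurvey} applies and yields a set $\class S'$ of objects with $\X=\Filt(\class S')$. Finally, since $\class S'\subseteq\X$ we get $\Y=\rightperp{\X}\subseteq\rightperp{\class S'}$, and Eklof's lemma gives $\rightperp{\class S'}=\rightperp{\Filt(\class S')}=\rightperp{\X}=\Y$; consequently $\leftperp{(\rightperp{\class S'})}=\leftperp{\Y}=\X$, so $(\X,\Y)=(\Filt(\class S'),\rightperp{\Filt(\class S')})=(\leftperp{(\rightperp{\class S'})},\rightperp{\class S'})$, which is the asserted form (after renaming $\class S'$ to $\class S$).

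For the concluding ``in particular'' clause, given a deconstructible class $\Filt(\class S)$ that is closed under retracts and contains a generator, I would apply part (i) with the set $\mathcal{X}:=\class S$: this produces the complete cotorsion pair $(\leftperp{(\rightperp{\class S})},\rightperp{\class S})$ whose left-hand class consists of retracts of direct sums of $\Filt(\class S)$-objects, hence equals $\Filt(\class S)$ by the retract-closure hypothesis; and $\rightperp{\Filt(\class S)}=\rightperp{\class S}$ by Eklof's lemma, so $(\Filt(\class S),\rightperp{\Filt(\class S)})$ is the complete cotorsion pair in question. I do not expect any genuine obstacle: the argument is purely organisational, and the only non-formal input is the Hill-lemma machinery underlying \cite[Thm.~5.18]{Stoviceksurvey}, which I would cite rather than reprove.
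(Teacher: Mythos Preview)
Your proposal is correct and in fact goes beyond what the paper does: the paper gives no proof of this proposition at all, merely recording it with the attribution ``(see \cite[Theorems~5.16,~5.17,~5.18]{Stoviceksurvey})''. Your assembly of the argument from Eklof's lemma, the small object argument, and the Hill-lemma deconstructibility theorem is exactly how one would unpack the cited references, so there is nothing to compare.
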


We can now state and prove the main results of this section.

\begin{theorem}\textnormal{(Right-lifting of abelian model structures)}
\label{thm:right_lifting}
Under Setup \ref{setup_lifting}, assume in addition that the categories $\D$ and $\M$ are Grothendieck and that the following hold:
\begin{itemize}
\item[(i)] $q$ maps cofibrations in $\M$ to monomorphisms in $\D$.
\item[(ii)] The pairs $(\C_{\M}\cap\W_{\M},\F_{\M})$ and $(\C_{\M},\W_{\M}\cap\F_{\M})$ are complete cotorsion pairs which are each generated by a set.
\item[(iii)] The right acyclicity condition $\leftperp{\F_{\D}}\subseteq \W_{\D}$ holds.
\end{itemize}
Then the right-lifted abelian model structure $(\C_{\D},\W_{\D},\F_{\D})$ exists on $\D$ and is hereditary if the given Hovey triple on $\M$ is.
\end{theorem}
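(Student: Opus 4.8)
The strategy is to verify the two conditions in Corollary~\ref{cor:lifting_nec_suf}: namely, that the right-lifted functorial complete cotorsion pairs $(\C_{\D},\W_{\D}\cap\F_{\D})$ and $(\leftperp{\F_{\D}},\F_{\D})$ exist on $\D$, and that the right acyclicity condition holds. Condition (iii) of the theorem is precisely the acyclicity assumption, so the work is entirely in producing the two cotorsion pairs. The plan is to pull the generating sets on $\M$ back along $i$, use Proposition~\ref{prop:deconstruct} to manufacture complete cotorsion pairs on $\D$ from the pulled-back data, and then identify the classes obtained this way with the candidate classes $\F_{\D}=i^{-1}(\F_{\M})$, $\W_{\D}\cap\F_{\D}=i^{-1}(\W_{\M}\cap\F_{\M})$, and the left orthogonals $\C_{\D}=\leftperp{(\F_{\D}\cap\W_{\D})}$, $\leftperp{\F_{\D}}$.

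First I would set up the deconstructibility input. Write $(\C_{\M}\cap\W_{\M},\F_{\M})=(\Filt(\clS_1),\rightperp{\Filt(\clS_1)})$ and $(\C_{\M},\W_{\M}\cap\F_{\M})=(\Filt(\clS_2),\rightperp{\Filt(\clS_2)})$ for sets $\clS_1,\clS_2$, using Proposition~\ref{prop:deconstruct}(ii). The key point is that since $i$ is faithful and exact, the right orthogonal $i^{-1}(\rightperp{\clS})$ behaves well: for $D\in\D$ and $S\in\clS$, I would relate $\Ext^1_{\D}(qS',D)$ and $\Ext^1_{\M}(S',iD)$ via the adjunction $(q,i)$ together with exactness, so that the right orthogonal of $q(\clS_1)$ in $\D$ is exactly $i^{-1}(\F_{\M})=\F_{\D}$ — here condition (i), that $q$ sends cofibrations to monomorphisms, is what feeds into Lemma~\ref{lemma_trivial_fibrations}(iv) to give the needed equality $\F_{\D}=\rightperp{q(\C_{\M}\cap\W_{\M})}=\rightperp{q(\clS_1)}$ rather than just an inclusion, and similarly $\F_{\D}\cap\W_{\D}=\rightperp{q(\clS_2)}$. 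Then $q(\clS_1)$ and $q(\clS_2)$ are the required generating sets on $\D$. To invoke Proposition~\ref{prop:deconstruct}(i) I must check that $\Filt(q(\clS_j))$ contains a generator of $\D$: since $q$ is a left adjoint it is cocontinuous, and $\Filt(\clS_j)$ contains a generator of $\M$ (it contains $\C_{\M}$ resp. $\C_{\M}\cap\W_{\M}$, which contain generators as these are components of complete cotorsion pairs generated by sets); I would deduce that $q$ applied to a generating family, suitably filtered, generates $\D$, using that $i$ is faithful so $q$ has dense image on generators. This yields two complete cotorsion pairs $(\leftperp{\F_{\D}},\F_{\D})$ and $(\leftperp{(\F_{\D}\cap\W_{\D})},\F_{\D}\cap\W_{\D})=(\C_{\D},\F_{\D}\cap\W_{\D})$ on $\D$, and functoriality comes for free since cotorsion pairs generated by sets are functorially complete (small object argument).

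Having produced both cotorsion pairs and with the acyclicity condition in hand as hypothesis (iii), Corollary~\ref{cor:lifting_nec_suf} gives that $(\C_{\D},\W_{\D},\F_{\D})$ is a Hovey triple on $\D$, i.e.\ the right-lifted abelian model structure exists; here one also uses that $\W_{\D}=i^{-1}(\W_{\M})$ is thick and closed under summands because $i$ is exact and faithful and $\W_{\M}$ is thick. Finally, for the hereditariness claim, I would use Hovey's characterization: the model structure on $\D$ is hereditary iff its two defining cotorsion pairs are hereditary, equivalently (since $\W_{\D}$ is thick) iff $\C_{\D}$ is closed under kernels of epimorphisms between its objects and $\F_{\D}$ under cokernels of monomorphisms. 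The class $\F_{\D}=i^{-1}(\F_{\M})$ is closed under cokernels of admissible monos whenever $\F_{\M}$ is, because $i$ is exact; and if the Hovey triple on $\M$ is hereditary then $\F_{\M}$ has exactly this closure property, so $\F_{\D}$ inherits it and the lifted triple is hereditary. The main obstacle I anticipate is the bookkeeping around the equalities in Lemma~\ref{lemma_trivial_fibrations}(iii)--(iv): one must be careful that hypothesis (i) is genuinely used to upgrade the a priori inclusions $\F_{\D}\subseteq\rightperp{q(\C_{\M}\cap\W_{\M})}$ and $\F_{\D}\cap\W_{\D}\subseteq\rightperp{q(\C_{\M})}$ to equalities, since without equality the cotorsion pair produced by the small object argument from $q(\clS_j)$ need not have the prescribed right-hand class $\F_{\D}$.
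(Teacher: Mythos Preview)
Your proposal is correct and follows essentially the same route as the paper: reduce via Corollary~\ref{cor:lifting_nec_suf}, use Proposition~\ref{prop:deconstruct} to produce generating sets $q(\clS_1), q(\clS_2)$ on $\D$, identify the resulting right-orthogonals with $\F_{\D}$ and $\F_{\D}\cap\W_{\D}$ via Lemma~\ref{lemma_trivial_fibrations}(iv), and check that $\Filt(q(\clS_j))$ contains a generator. Two small points to tighten: condition~(i) is used not only in Lemma~\ref{lemma_trivial_fibrations}(iv) but also to ensure $q(\Filt(\clS_j))\subseteq\Filt(q(\clS_j))$ (since $q$ must preserve the monomorphisms appearing in the filtration, which are cofibrations); and for hereditariness you should also note that $\F_{\D}\cap\W_{\D}=i^{-1}(\F_{\M}\cap\W_{\M})$ is closed under cokernels of monomorphisms by the same argument, so that \emph{both} cotorsion pairs are hereditary.
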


\begin{proof}
From Corollary \ref{cor:lifting_nec_suf}, it suffices to show that the pairs $(\C_{\D},\W_{\D}\cap\F_{\D})$ and $(\leftperp{\F_{\D}},\F_{\D})$ are complete cotorsion pairs in $\D$. By condition (ii) and Proposition \ref{prop:deconstruct} (ii) we may assume that $(\C_{\M}\cap\W_{\M},\F_{\M})=(\leftperp{(\rightperp{\class S})},\rightperp{\class S})=(\mathrm{Filt}(\class S),\rightperp{\mathrm{Filt}(\class S)})$ for a set $\mathcal{S}$, such that $\mathrm{Filt}(\class S)$ is closed under retracts and contains a generator. We claim that there is a complete cotorsion pair 
\begin{equation}
\label{eq:cot_pairs}
(\leftperp{\F_{\D}},\F_{\D})=(\leftperp{(\rightperp{q(\class S)})},\rightperp{q(\class S)})
\end{equation}
in $\D$. One can show, using an identical argument as the one given in Lemma~\ref{lemma_trivial_fibrations} (iv), that $\F_{\D}=\rightperp{q(\class S)}$; hence we obtain the equality in (\ref{eq:cot_pairs}). To prove completeness it suffices to argue that the class $\mathrm{Filt}(q(\class S))$ contains a generator (because then we can apply Proposition \ref{prop:deconstruct} (i)). 
For this let $G\in\mathrm{Filt}(\mathcal{S})$ be a generator of $\M$ and note that, since $i$ is faithful, $q(G)$ is a generator of $\class D$ (by adjunction). Moreover, since by assumption, $q$ maps monomorphisms with cokernel in $\class S$ (which in particular are cofibrations in $\M$) to monomorphisms in $\A$, and also commutes with colimits, we obtain that $q(G)\in q\mathrm{Filt}(\mathcal{S})\subseteq\mathrm{Filt}(q(\mathcal{S}))$.  

Using very similar arguments one can prove the existence of a complete cotorsion pair in $\D$:
\begin{equation}
\label{eq:cot_pairs_2}
(\C_{\D},\W_{\D}\cap\F_{\D})=(\leftperp{(\rightperp{q(\class S')})},\rightperp{q(\class S')}),
\end{equation}
for a set $\class S'$ such that $(\C_{\M},\W_{\M}\cap\F_{\M})=(\leftperp{(\rightperp{\class S'})},\rightperp{\class S'})=(\mathrm{Filt}(\class S'),\rightperp{\mathrm{Filt}(\class S')})$. 
Indeed, via Lemma~\ref{lemma_trivial_fibrations} (iv), and the definition of $\C_{\D}:=\leftperp{(\W_{\D}\cap\F_{\D})}$, we obtain the equality in (\ref{eq:cot_pairs_2}). Then one shows that $\mathrm{Filt}(q(\mathcal{S}'))$ contains a generator of $\D$ and employs Proposition \ref{prop:deconstruct} (i).

To prove the final assertion, assume that the Hovey triple $(\C_{\M},\W_{\M},\F_{\M})$ is hereditary. 
Since the functor $i$ is exact, it follows that the classes $\F_{\D}:=i^{-1}(\F_{\M})$ and $\F_{\D}\cap\W_{\D}:=i^{-1}(\F_{\M}\cap\W_{\M})$ are closed under cokernels of monomorphisms. Then from \cite[Lemma~5.24]{GT} we deduce that  $\Ext^i_{\D}(\C_{\D}\cap \W_{\D}, \F_{\D})=0$ and $\Ext^i_{\D}(\C_{\D}, \W_{\D}\cap \F_{\D})=0$ for all $i\geq 1$.
\end{proof}

\begin{theorem} \textnormal{(Left-lifting of abelian model structures)}
\label{thm:left_lifting}
Under Setup \ref{setup_lifting}, assume in addition that the categories $\E$ and $\M$ are Grothendieck, and that the following hold:
\begin{itemize}
\item[(i)] The functor $j\colon \E\rightarrow \M$ has a left adjoint.
\item[(ii)] The pairs $(\C_{\M}\cap\W_{\M},\F_{\M})$ and $(\C_{\M},\W_{\M}\cap\F_{\M})$ are complete cotorsion pairs which are each generated by a set.
\item[(iii)] The functor $j$ maps a generator of $\E$ to $\C_{\M}\cap\W_{\M}$.
\item[(iv)] The left acyclicity condition $\C_{\E}^{\bot}\subseteq\W_{\E}$ holds.
\end{itemize}
Then the left-lifted abelian model structure $(\C_{\D},\W_{\D},\F_{\D})$ exists on $\D$ and is hereditary if the given Hovey triple on $\M$ is.
\end{theorem}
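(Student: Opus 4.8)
The plan is to dualise the proof of Theorem~\ref{thm:right_lifting}. By the second half of Corollary~\ref{cor:lifting_nec_suf}, and because $\W_{\E}=j^{-1}(\W_{\M})$ is automatically thick and closed under summands (as a preimage under $j$), it suffices to produce two functorial complete cotorsion pairs in $\E$, namely $(\C_{\E}\cap\W_{\E},\F_{\E})$ and $(\C_{\E},\rightperp{\C_{\E}})$; the left acyclicity condition $\rightperp{\C_{\E}}\subseteq\W_{\E}$ is exactly hypothesis~(iv), while the equalities $\C_{\E}\cap\W_{\E}=j^{-1}(\C_{\M}\cap\W_{\M})$ and $\F_{\E}=\rightperp{(\C_{\E}\cap\W_{\E})}$ are built into Setup~\ref{setup_lifting}. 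So the whole statement reduces to a deconstructibility statement about the pulled-back left classes $j^{-1}(\C_{\M})$ and $j^{-1}(\C_{\M}\cap\W_{\M})$.

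First I would invoke hypothesis~(ii) together with Proposition~\ref{prop:deconstruct}(ii) to write the two cotorsion pairs on $\M$ in the form $(\C_{\M}\cap\W_{\M},\F_{\M})=(\Filt(\class S),\rightperp{\class S})=(\leftperp{(\rightperp{\class S})},\rightperp{\class S})$ and $(\C_{\M},\W_{\M}\cap\F_{\M})=(\Filt(\class S'),\rightperp{\class S'})$ for sets $\class S,\class S'$ whose filtration closures are closed under retracts and contain generators of $\M$. The technical heart of the proof---dual to the core computation inside Theorem~\ref{thm:right_lifting}---is then to exhibit $\C_{\E}=j^{-1}(\Filt(\class S'))$ and $\C_{\E}\cap\W_{\E}=j^{-1}(\Filt(\class S))$ as deconstructible classes in $\E$ that contain a generator, so that Proposition~\ref{prop:deconstruct}(i) produces the required complete cotorsion pairs. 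Here hypothesis~(iii) is what puts a generator of $\E$ into $\C_{\E}\cap\W_{\E}$ (hence also into $\C_{\E}$), and hypothesis~(i) is what delivers deconstructibility: the left adjoint $j_{\lambda}\colon\M\rightarrow\E$ of $j$ lets one transport the generating sets into $\E$---forming $j_{\lambda}(\class S),j_{\lambda}(\class S')$---and then, arguing as in Lemma~\ref{lemma_trivial_fibrations}(iv) via the adjunction $j_{\lambda}\dashv j$ and the exactness and faithfulness of $j$, one identifies $\C_{\E}$ and $\C_{\E}\cap\W_{\E}$ with the left classes of the cotorsion pairs generated by these transported sets.

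For the hereditary assertion, suppose the Hovey triple on $\M$ is hereditary; then $(\C_{\M}\cap\W_{\M},\F_{\M})$ and $(\C_{\M},\W_{\M}\cap\F_{\M})$ are hereditary cotorsion pairs, so $\C_{\M}\cap\W_{\M}$ and $\C_{\M}$ are closed under kernels of epimorphisms. Since $j$ is exact, their preimages $\C_{\E}\cap\W_{\E}$ and $\C_{\E}$ are closed under kernels of epimorphisms as well, and the dual of \cite[Lemma~5.24]{GT}---applied exactly as in Theorem~\ref{thm:right_lifting}---gives $\Ext^{i}_{\E}(\C_{\E}\cap\W_{\E},\F_{\E})=0$ and $\Ext^{i}_{\E}(\C_{\E},\rightperp{\C_{\E}})=0$ for all $i\geqslant 1$, i.e.\ the transferred Hovey triple on $\E$ is hereditary. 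The hard part will be the deconstructibility step in the middle paragraph: in the right-lifting situation the generator of $\D$ was manufactured directly as $q$ applied to a generator of $\M$, but here one must carefully check that transporting the generating sets through $j_{\lambda}$---equivalently, pulling deconstructible classes back along $j$---is compatible with transfinite filtrations, and this is the single place where hypothesis~(i) genuinely enters.
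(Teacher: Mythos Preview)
Your reduction via Corollary~\ref{cor:lifting_nec_suf} and your hereditary argument are correct and match the paper exactly. The gap is in the middle paragraph, where you try to establish deconstructibility of $\C_{\E}=j^{-1}(\C_{\M})$ and $\C_{\E}\cap\W_{\E}=j^{-1}(\C_{\M}\cap\W_{\M})$ by transporting the sets $\class S,\class S'$ through $j_{\lambda}$ and invoking an analogue of Lemma~\ref{lemma_trivial_fibrations}(iv).

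That analogy does not give what you need. Lemma~\ref{lemma_trivial_fibrations}(iv), applied to the adjunction $j_{\lambda}\dashv j$ (where $j$ now plays the role of the right adjoint $i$), would identify the pulled-back \emph{right} class $j^{-1}(\F_{\M})$ as a right-perpendicular $\rightperp{j_{\lambda}(\C_{\M}\cap\W_{\M})}$. But you are pulling back the \emph{left} classes $\C_{\M}$ and $\C_{\M}\cap\W_{\M}$, and you need to exhibit these preimages as the left-hand sides of set-generated cotorsion pairs, i.e.\ as deconstructible classes. There is no reason why $j_{\lambda}(\class S')$ should generate a cotorsion pair whose left class is $j^{-1}(\C_{\M})$: for the inclusion $\rightperp{\C_{\E}}\subseteq\rightperp{j_{\lambda}(\class S')}$ you would need $j_{\lambda}(\class S')\subseteq\C_{\E}$, i.e.\ $jj_{\lambda}(s)\in\C_{\M}$ for each $s\in\class S'$, and nothing in the hypotheses guarantees this. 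More fundamentally, the machinery of Proposition~\ref{prop:deconstruct} is not self-dual---one produces complete cotorsion pairs by specifying a generating set for the left class (via filtrations), not by specifying the right class---so the strategy of Theorem~\ref{thm:right_lifting} does not dualise at this point.

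The paper circumvents this by appealing to \cite[Proposition~A.10]{Becker}, which says that the preimage of a deconstructible class under a \emph{monadic} functor between Grothendieck categories is again deconstructible. Since $j$ is faithful and, by hypothesis~(i) together with the given right adjoint $p$, has both a left and a right adjoint, it is monadic (e.g.\ by \cite[Theorem~4.4.4]{hand2}). This is precisely where hypothesis~(i) enters. Becker's result then gives that $\C_{\E}$ and $\C_{\E}\cap\W_{\E}$ are deconstructible; closure under retracts is inherited from $\C_{\M}$ and $\C_{\M}\cap\W_{\M}$, and hypothesis~(iii) supplies the generator, so Proposition~\ref{prop:deconstruct} yields the required complete cotorsion pairs.
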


\begin{proof}
From Corollary \ref{cor:lifting_nec_suf}, it suffices to show that the pairs $(\C_{\E}\cap \W_{\E},\F_{\E})$ and $(\C_{\E},\rightperp{\C_{\E}})$ are complete cotorsion pairs in $\E$. From Proposition \ref{prop:deconstruct} (ii) there exists a set $\class S$ for which $(\C_{\M},\W_{\M}\cap \F_{\M})=(\leftperp{(\rightperp{\class S})},\rightperp{\class S})=(\mathrm{Filt}(\class S),\rightperp{\mathrm{Filt}(\class S)})$, such that $\mathrm{Filt}(\class S)$ is closed under retracts and contains a generator.

In order to employ \cite[Proposition~A.10]{Becker} and deduce that the class $j^{-1}\C_{\M}=\C_{\E}$ is deconstructible, we need the functor $j$ to be monadic. Since $j$ is faithful and has a left and a right adjoint, we deduce, from \cite[Theorem~4.4.4]{hand2} for instance, that $j$ is monadic. 
In addition, the class $j^{-1}\C_{\M}=\C_{\E}$ is closed under retracts (since $\C_{\M}$ is closed under retracts) and by condition (iii), it contains a generator. Hence, by Proposition \ref{prop:deconstruct} we deduce that $(\C_{\E},\rightperp{\C_{\E}})$ is a complete cotorsion pair. 

Similarly, one can prove that $(\C_{\E}\cap \W_{\E},\F_{\E})$ is a complete cotorsion pair. Indeed, from Proposition \ref{prop:deconstruct} (ii) we may assume that $\C_{\M}\cap \W_{\M}$ is deconstructible and by Becker's result \cite[Proposition~A.10]{Becker} we obtain that $j^{-1}(\C_{\M}\cap \W_{\M})=j^{-1}\C_{\M}\cap j^{-1}\W_{\M}=\C_{\E}\cap\W_{\E}$ is deconstructible, which contains a generator by condition (iii); hence the result follows from Proposition \ref{prop:deconstruct}.

The final assertion on hereditariness follows from a dual argument as the one in the proof of Theorem \ref{thm:right_lifting}, namely, if we assume that $(\C_{\M},\W_{\M},\F_{\M})$ is a hereditary Hovey triple, then the classes $\C_{\E}=j^{-1}\C_{\M}$ and $\C_{\E}\cap\W_{\E}=j^{-1}(\C_{\M}\cap\W_{\M})$ are closed under kernels of epimorphisms (since $\C_{\M}$ and $\C_{\M}\cap\W_{\M}$ have this property), hence $(\C_{\E},\W_{\E},\F_{\E})$ is a hereditary Hovey triple from \cite[Lemma~5.24]{GT}.
\end{proof}

We close this section with two examples illustrating that in the context of chain complexes, transferring the (usual) projective and injective model structures induces model structures of the same kind. An example of different flavour is given in Example \ref{ex:Gor_morphism_triple}.

We recall the following folklore facts. For a more detailed recollection the reader may consult \cite[Theorems 6.9 and 8.6]{SP} and the references given therein.

\label{ipg_proj_inj_models}
Let $\B$ be a Grothendieck abelian category with enough projective objects.  There exist the following two model structures on the category of complexes $\Ch(\B)$:
\begin{itemize}
\item[(\textsf{proj})] The projective one, $\Ch(\B)_{\mathsf{proj}}$, which is given by the hereditary Hovey triple 
\[(\leftperp{\Chac(\B)},\Chac(\B),\Ch(\B)).\]
\end{itemize}
The objects in the class $\leftperp{\Chac(\B)}$ are called \textit{semi-projective} (or \textit{dg-projective}) complexes. In fact, the cotorsion pair $(\leftperp{\Chac(\B)},\Chac(\B))$ is generated by the set $\mathcal{S}=\{S^{n}(G)\,|\,n\in\mathbb{Z}\}$, where $G$ is a projective generator of $\B$ and $S^{n}(G)$ denotes the stalk complex which has a copy of $G$ in degree $n$. Note that from Proposition \ref{prop:deconstruct} we deduce that the class of semi-projective complexes in $\Ch(\B)$ coincides with summands from the class $\mathrm{Filt}(\mathcal{S})$. Also, the cotorsion pair $(\mathrm{Proj}\Ch(\B),\Ch(\B))$ is generated by the set of all complexes of the form $D^{n}(G):=(0\rightarrow G=G\rightarrow 0)$ that are concentrated in degrees $n$ and $n-1$; this is a set of projective generators for $\Ch(\B)$.
\begin{itemize}
\item[(\textsf{inj})] The injective one, $\Ch(\B)_{\mathsf{inj}}$, which is given by the hereditary Hovey triple 
\[(\Ch(\B),\Chac(\B),\rightperp{\Chac(\B)}).\]
\end{itemize}
The objects in the class $\rightperp{\Chac(\B)}$ are called \textit{semi-injective} (or \textit{dg-injective}) complexes. The cotorsion pairs $(\Ch(\B),\mathrm{Inj}\Ch(\B))$ and $(\Chac(\B),\rightperp{\Chac(\B)})$ are also generated by sets. As recalled for instance in \cite[Theorem~8.6]{SP}, there are sets $\X$ and $\X'$ of objects in $\Ch(\B)$ such that $(\Chac(\B),\rightperp{\Chac(\B)})=(\mathrm{Filt}(\X),\rightperp{\mathrm{Filt}(\X)})$ and $(\Ch(\B),\mathrm{Inj}\Ch(\B))=(\mathrm{Filt}(\X'),\rightperp{\mathrm{Filt}(\X')})$. Here the classes $\mathrm{Filt}(\X)$ and $\mathrm{Filt}(\X')$ are closed under retracts and contain generators.

Note that both these model structures have the same class of trivial objects $\Chac(\B)$ and the same homotopy category, which is the unbounded derived category $\D(\B)$. Evidently, the identity functor from $\Ch(\B)_{\mathsf{proj}}$ to $\Ch(\B)_{\mathsf{inj}}$ is left Quillen and a Quillen equivalence.  

\begin{remark}
\label{rem:induced_rec_cpx}
Let $\mathsf{R_{\mathsf{ab}}}(\A,\B,\C)$ be a recollement of abelian categories. Then there exists an associated recollement of categories of complexes $\mathsf{R_{\mathsf{ab}}}(\Ch(\A),\Ch(\B),\Ch(\C)),$ that we denote by the same functors:
\begin{equation}
\label{eq:rec_cpx}
  \xymatrix@C=4pc{
\Ch(\A)   \ar@<0.0ex>[r] |-{\, i\,}   &  
  \Ch(\B) \ar@/^1.4pc/[l] ^-{p}    \ar@/_1.4pc/[l] _-{q}  \ar[r] |-{\, e \,}&
  \Ch(\C). \ar@/_1.4pc/[l]  _-{l}  \ar@/^1.4pc/[l] ^-{r}
  }
\end{equation}
\end{remark}

\begin{proposition}
\label{prop:transfer_trivial_proj}
Consider an adjoint pair $q\colon \B\rightleftarrows \A\colon i$ between Grothendieck categories with enough projective objects, where the right adjoint $i$ is faithful and exact. Then the right transfer of $\Ch(\B)_{\mathrm{proj}}$ along $i$ exists and coincides with $\Ch(\A)_{\mathrm{proj}}$.  
\end{proposition}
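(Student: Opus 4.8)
The plan is to verify that the hypotheses of Theorem~\ref{thm:right_lifting} are met by the adjunction $q\colon\Ch(\B)\rightleftarrows\Ch(\A)\colon i$ together with the Hovey triple $\M:=\Ch(\B)_{\mathrm{proj}}=(\leftperp{\Chac(\B)},\Chac(\B),\Ch(\B))$, and then to identify the resulting right-lifted triple on $\D:=\Ch(\A)$ with $\Ch(\A)_{\mathrm{proj}}$. Here the functor $i\colon\Ch(\A)\rightarrow\Ch(\B)$ is the degreewise application of the given faithful exact right adjoint (still faithful and exact), and $q$ its degreewise left adjoint. First I would observe that both $\Ch(\A)$ and $\Ch(\B)$ are Grothendieck (since $\A$ and $\B$ are), that condition (ii) of Theorem~\ref{thm:right_lifting} holds by the folklore facts recalled above (the cotorsion pairs $(\Chac(\B),\Ch(\B))$ and $(\leftperp{\Chac(\B)},\Chac(\B))$ are each generated by a set, namely $\{D^n(G)\}$ and $\{S^n(G)\}$ for $G$ a projective generator of $\B$), and that $i$ is faithful and exact as required by Setup~\ref{setup_lifting}.

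The two remaining hypotheses are condition (i), that $q$ sends cofibrations of $\Ch(\B)_{\mathrm{proj}}$ to monomorphisms in $\Ch(\A)$, and condition (iii), the right acyclicity condition $\leftperp{\F_{\D}}\subseteq\W_{\D}$. For (i), the cofibrations of the projective model structure are the monomorphisms with dg-projective (hence degreewise projective) cokernel; such a short exact sequence is degreewise split, so applying $q$ degreewise preserves exactness and in particular monomorphy. (Alternatively, $q$ preserves the generating cofibrations $0\rightarrow S^n(G)$ and $D^{n-1}(G)\rightarrow D^n(G)$ — these have projective, hence $q$-acyclic-for-$\mathrm{Tor}$, pieces — and preserves colimits, so it preserves all cofibrations up to the monomorphism requirement.) For (iii): by construction $\F_{\D}=i^{-1}(\Ch(\B))=\Ch(\A)$, so $\leftperp{\F_{\D}}$ is the class of objects $C$ with $\Ext^1_{\Ch(\A)}(C,\Ch(\A))=0$, which is exactly $\mathrm{Proj}\,\Ch(\A)$, the degreewise-split-projective complexes; and $\W_{\D}=i^{-1}(\Chac(\B))$ is the class of complexes $X\in\Ch(\A)$ with $iX$ acyclic. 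Since $i$ is exact it preserves acyclicity in one direction, but we need: $X\in\mathrm{Proj}\,\Ch(\A)$ implies $iX$ acyclic. A projective object of $\Ch(\A)$ is a summand of a direct sum of complexes $D^n(P)$ with $P$ projective in $\A$; these are contractible, hence $iD^n(P)$ is contractible, hence acyclic, and summands of direct sums of acyclic complexes are acyclic. Thus (iii) holds. (In fact more is true: $i$ sends $\Ch(\A)$ into $\Ch(\B)$ and $\mathrm{Proj}\,\Ch(\A)$ into $\Chac(\B)$, which is all that is needed.)

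Having verified all hypotheses, Theorem~\ref{thm:right_lifting} yields that the right-lifted abelian model structure $(\C_{\D},\W_{\D},\F_{\D})$ exists on $\Ch(\A)$ and, since the projective Hovey triple on $\Ch(\B)$ is hereditary, this lifted triple is hereditary as well. It remains to identify it with $\Ch(\A)_{\mathrm{proj}}=(\leftperp{\Chac(\A)},\Chac(\A),\Ch(\A))$. The fibrant class is immediate: $\F_{\D}=i^{-1}(\Ch(\B))=\Ch(\A)$. For the trivial class, I would show $\W_{\D}=i^{-1}(\Chac(\B))=\Chac(\A)$; the inclusion $\supseteq$ is exactness of $i$, and for $\subseteq$ one uses that $i$ is faithful and exact, so a complex $X$ with $iX$ acyclic has $i(H^n(X))=H^n(iX)=0$, whence $H^n(X)=0$ by faithfulness (a faithful exact functor reflects the zero object). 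Once $\W_{\D}=\Chac(\A)$ is established, the cofibrant class of the lifted structure is $\C_{\D}=\leftperp{(\W_{\D}\cap\F_{\D})}=\leftperp{\Chac(\A)}$, the dg-projective complexes, so the lifted triple is precisely $\Ch(\A)_{\mathrm{proj}}$.

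I expect the only point requiring genuine care to be the right acyclicity condition (iii), i.e.\ checking that degreewise-projective complexes in $\Ch(\A)$ are sent by $i$ to acyclic complexes in $\Ch(\B)$ — but as noted this reduces to the contractibility of the complexes $D^n(P)$ and stability of acyclicity under coproducts and summands, so it is routine once set up correctly. The identification of $\W_{\D}$ with $\Chac(\A)$ via faithfulness of $i$ is the other small subtlety, but likewise elementary.
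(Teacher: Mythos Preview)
Your proposal is correct and follows essentially the same route as the paper: verify the hypotheses of Theorem~\ref{thm:right_lifting} and then identify the lifted trivial class with $\Chac(\A)$ via faithfulness and exactness of $i$. The only cosmetic difference is in condition (i), where you invoke the degreewise splitting of a short exact sequence with degreewise projective cokernel, while the paper phrases the same fact as the vanishing of $L_{1}q(Z(c))$ for $Z(c)$ projective.
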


\begin{proof}
We  check the conditions of Theorem \ref{thm:right_lifting}. Condition (ii) on $\Ch(\B)_{\mathrm{proj}}$ is satisfied as we recalled above. To check condition (iii), note first that since all the objects of $\Ch(\B)$ are fibrant, we have that $\F_{\Ch(\A)}:=i^{-1}\F_{\Ch(\B)}=\Ch(\A)$; hence condition (iii) asks for the projective objects in $\Ch(\A)$ to be mapped to acyclic complexes in $\Ch(\B)$. But this holds since the projective objects in $\Ch(\B)$ are split exact complexes of projectives. 

To check that condition (i) holds, we need $q$ to map a cofibration in $\Ch(\B)$, i.e., a monomorphism $\iota\colon X\rightarrow Y$ with $Z:=\coker(\iota)$ a dg-projective complex, to a monomorphism $q(\iota)\colon qX\rightarrow qY$ in $\Ch(\B)$. That is, for all integers $c$, we need the induced map $q(\iota)_c\colon qX(c)\rightarrow qY(c)$ to be a monomorphism in $\A$. For this we consider the following exact sequence where the left-most term $L_{1}q(Z(c))$ is the first left derived functor of $q$ evaluated at $Z(c)$,
\begin{equation}
L_{1}q(Z(c))\rightarrow qX(c)\rightarrow qY(c)\rightarrow qZ(c)\rightarrow 0.
\nonumber
\end{equation}
Since $Z(c)$ is a projective object in $\B$, $L_{1}q(Z(c))$ vanishes. Hence, we conclude that the right transfer of $\Ch(\B)_{\mathrm{proj}}$ along $i$ exists. In order to prove that it coincides with $\Ch(\A)_{\mathrm{proj}}$, it suffices to show that they share the same trivial objects. By construction of the right-transferred model on $\Ch(\A)$,  its class of trivial objects is $i^{-1}\Ch_{\mathrm{ac}}(\B):=\{X\in\Ch(\A)\, |\, iX\in\Ch_{\mathrm{ac}}(\B)\}$. Since $i$ is exact we clearly have that $\Ch_{\mathrm{ac}}(\A)\subseteq i^{-1}\Ch_{\mathrm{ac}}(\B)$. To obtain the reverse inclusion, we observe that if $X\in\Ch(\A)$ is such that $iX$ has zero homology groups $H_{n}(iX)$ for all integers $n$, then we also have that $iH_{n}(X)=0$; thus $H_{n}(X)=0$ since $i$ is faithful.
\end{proof}

We also have the dual of the above Proposition:
\begin{proposition}
\label{prop:transfer_trivial_inj}
Consider an adjoint triple between Grothendieck categories with enough projective objects,
\begin{equation}
\xymatrix@C=3pc{
\A \ar[r]^-{i} & \B\ar@/_1.2pc/[l]_-{q} \ar@/^1.2pc/[l]_-{p} 
} 
\nonumber
\end{equation}
where the functor $i$ is faithful and exact. Then the left transfer of $\Ch(\B)_{\mathrm{inj}}$ along $i$ exists and coincides with $\Ch(\A)_{\mathrm{inj}}$.  
\end{proposition}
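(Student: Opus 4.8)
The plan is to derive the result from Theorem~\ref{thm:left_lifting}, applied with $\M:=\Ch(\B)$ carrying the injective model structure $\Ch(\B)_{\mathrm{inj}}$, whose hereditary Hovey triple is $(\Ch(\B),\Chac(\B),\rightperp{\Chac(\B)})$, with $\E:=\Ch(\A)$, and with the left adjoint of Setup~\ref{setup_lifting} taken to be $j:=i$ (its right adjoint being $p$). Since $i$ is faithful and exact, Setup~\ref{setup_lifting} is in force, and it remains to verify the four hypotheses of Theorem~\ref{thm:left_lifting} and then to identify the resulting model structure.

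Condition~(i) holds because $j=i$ has the left adjoint $q$. Condition~(ii) is the folklore fact recalled in the discussion preceding Proposition~\ref{prop:transfer_trivial_proj} (see also \cite[Theorems~6.9 and~8.6]{SP}): the cotorsion pairs $(\Chac(\B),\rightperp{\Chac(\B)})$ and $(\Ch(\B),\mathrm{Inj}\Ch(\B))$ are each generated by a set. For condition~(iii) I must exhibit a generator of $\Ch(\A)$ whose image under $i$ lies in $\C_{\M}\cap\W_{\M}=\Ch(\B)\cap\Chac(\B)=\Chac(\B)$; since $i$ is exact it preserves acyclicity, so it suffices to produce an acyclic generator of $\Ch(\A)$, and for a generator $G$ of $\A$ the contractible complex $\bigoplus_{n\in\mathbb{Z}}D^{n}(G)$, with $D^{n}(G)=(0\to G\xrightarrow{\mathrm{id}}G\to 0)$ in degrees $n$ and $n-1$, will do. For condition~(iv), note that $\C_{\E}=i^{-1}(\Ch(\B))=\Ch(\A)$, so $\rightperp{\C_{\E}}=\rightperp{\Ch(\A)}$ is the class of injective objects of $\Ch(\A)$, which are contractible and hence acyclic; and, repeating verbatim the homology computation from the proof of Proposition~\ref{prop:transfer_trivial_proj} (this is where faithfulness and exactness of $i$ are used), one gets $\W_{\E}=i^{-1}(\Chac(\B))=\Chac(\A)$, so the left acyclicity inclusion $\rightperp{\C_{\E}}\subseteq\W_{\E}$ holds.

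Theorem~\ref{thm:left_lifting} then provides the left-lifted abelian model structure on $\Ch(\A)$ with Hovey triple $(\C_{\E},\W_{\E},\F_{\E})$, hereditary because $\Ch(\B)_{\mathrm{inj}}$ is. To conclude I would identify it with $\Ch(\A)_{\mathrm{inj}}$ simply by comparing Hovey triples: the computations above give $\C_{\E}=\Ch(\A)$ and $\W_{\E}=\Chac(\A)$, whence $\F_{\E}=\rightperp{(\C_{\E}\cap\W_{\E})}=\rightperp{\Chac(\A)}$, which is exactly the hereditary Hovey triple of $\Ch(\A)_{\mathrm{inj}}$; as a Hovey triple determines an abelian model structure uniquely, the two coincide. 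I expect no real obstacle: the argument runs parallel to, and is slightly easier than, that of Proposition~\ref{prop:transfer_trivial_proj}, since every object of $\Ch(\B)_{\mathrm{inj}}$ is cofibrant, which trivializes both the generator and the acyclicity conditions; the only point meriting care is the equality $i^{-1}(\Chac(\B))=\Chac(\A)$, which is precisely the computation already carried out in the previous proof and which here does double duty, both for condition~(iv) and for pinning down the trivial objects.
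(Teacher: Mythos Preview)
Your proposal is correct and follows essentially the same approach as the paper: both verify the four hypotheses of Theorem~\ref{thm:left_lifting} (left adjoint $q$ exists, the injective cotorsion pairs on $\Ch(\B)$ are set-generated, the disk-complex generator $\bigoplus_n D^n(G)$ is acyclic and hence lands in $\Chac(\B)$ under the exact functor $i$, and injective complexes are contractible hence acyclic), and then identify the lifted model with $\Ch(\A)_{\mathrm{inj}}$ via the equality $i^{-1}(\Chac(\B))=\Chac(\A)$ established in the proof of Proposition~\ref{prop:transfer_trivial_proj}. Your write-up is in fact slightly more explicit than the paper's in spelling out the full Hovey triple of the lifted structure.
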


\begin{proof}
We check  the conditions of Theorem \ref{thm:left_lifting}. Condition (i) is given by assumption and condition (ii) holds by the facts related to $\C(\B)_{\mathrm{inj}}$ that we recalled above. We check condition (iii): If $G$ is a projective generator of $\A$, then the set of all complexes of the form $D^{n}(G):=(0\rightarrow G=G\rightarrow 0)$ that are concentrated in degrees $n$ and $n-1$, is a set of projective generators for $\Ch(\A)$. Since each complex in $D^{n}(G)$ is acyclic and $i$ is exact, we have that $i$ maps the projective generator $\oplus_{n}D^{n}(G)$ of $\Ch(\A)$ to an acyclic complex in $\Ch(\B)$. 
Also, condition (iv) asks for the injective objects in $\Ch(\A)$, which are split exact complexes of injectives, to be mapped to acyclic complexes in $\Ch(\B)$, which holds since $i$ is a functor. Finally, the left-transferred model on $\A$ along $i$ coincides with $\Ch(\A)_{\mathrm{inj}}$ since $\Ch_{\mathrm{ac}}(\A)= i^{-1}\Ch_{\mathrm{ac}}(\B)$ (same argument as in the proof of Proposition  \ref{prop:transfer_trivial_proj}). 
\end{proof}

\section{Lifting recollements of abelian categories}
\label{sec:Lifting_recoll}

In this section we show that there is a conceptual homotopical framework on a recollement of abelian categories which naturally induces a recollement between the corresponding homotopy categories. We restrict to recollements of Grothendieck categories, i.e.\ the abelian categories of the recollement are Grothendieck. We refer to \cite{ParraVitoria} for studying when the abelian categories in a recollement are Grothendieck.

We first define the classes of abelian model structures that we will work with.

\begin{definition}
\label{def:proj_inj_models}
An abelian model structure $\M$ with Hovey triple $(\C_{\M},\W_{\M},\F_{\M})$ is called \textbf{projective} if all objects in $\M$ are fibrant. Dually, it is called \textbf{injective} if all objects in $\M$ are cofibrant.
\end{definition}

Since recollements of abelian and of triangulated categories involve fully faithful functors, we will also need the following notion.

\begin{definition}
\label{def:right_derived_embedding}
Let $l\colon\A\leftrightarrows\B\colon r$ be a Quillen adjunction between model categories. If the right adjoint $r$ is fully faithful and its total right derived functor $\textbf{R}r$ is fully faithful too, we call the functor $r\colon \B\rightarrow \A$ a \textbf{right derived embedding}. Dually, in case the left adjoint $l$ is fully faithful and its total left derived functor $\mathbf{L}l$ is fully faithful too, we call $l$ a \textbf{left derived embedding}.
\end{definition}

For the Quillen adjoint triples that we work with in the rest of the paper we have the following more compact definition.

\begin{definition}
\label{defn:derembedding}
Let $\mathrm{Q_{tr}}(\A_{1/2},\B_{1/2},\alpha)$ be a Quillen adjoint triple, where $f,f',g,g'$ are identity maps, $i=i'$, $\alpha=\mathrm{id}$ and $\A_1$ (resp., $\B_1$) shares the same weak equivalences with $\A_2$ (resp., $\B_2$). The functor $i\colon \A\to \B$ is called a {\bf derived embedding} if the functor $i\colon \A_1\to \B_1$ is a right-derived embedding, or equivalently, the functor $i\colon \A_2\to \B_2$ is a left derived embedding (cf. diagram (\ref{eq:proinj_triple}) and Proposition~\ref{prop:left_right_derived_embedding}).   
\end{definition}

The next Proposition describes (left/right) derived embeddings in a typical case involving (left/right) transferred model structures, and will be useful in the sequel.

\begin{proposition}
\label{prop:lifting_of_induced}
Consider an adjoint triple $(q,i,p)$ between abelian categories,
\begin{equation}
\xymatrix@C=3pc{
\A\ar[r]^-{i} & \B\ar@/_1.2pc/[l]_-{q} \ar@/^1.2pc/[l]^-{p} 
} 
\nonumber
\end{equation}
where $i$ is fully faithful.
Assume that $\B$ admits two hereditary abelian model structures $\B_{1}$ and $\B_{2}$, that share the same weak equivalences and are Quillen equivalent via the identity functor.
If the right-lifted and the left-lifted abelian model structures along the functor $i$ exist on $\A$, and are denoted by $\A_{1}$ and $A_{2}$ respectively, then the following hold:
\begin{itemize}
\item[(i)] There  exists a Quillen adjoint triple $\mathrm{Q_{tr}}(\A_{1/2},\B_{1/2},\mathrm{id})$.
\item[(ii)] $i\colon \A_1\to \B_1$ is a right-derived embedding if and only if for any object $X$ in $\A_{1}$ the unit map $\eta_{QiX}\colon Q_{\B_{1}}iX\rightarrow (i\circ q)(Q_{\B_{1}}iX)$ is a weak equivalence in $\B_{1}$.

\item[(iii)] $i\colon \A_2\to \B_2$ is a left-derived embedding if and only if for any object $X$ in $\A_{2}$ the counit map $\epsilon_{RiX}\colon  (i\circ p)(R_{\B_{2}}iX)\rightarrow R_{\B_{2}}iX$ is a weak equivalence in $\B_{2}$.
\end{itemize}
In case (ii) or (iii) holds, $i$ is a derived embedding for the Quillen adjoint triple $\mathrm{Q_{tr}}(\A_{1/2},\B_{1/2},\mathrm{id})$.
\end{proposition}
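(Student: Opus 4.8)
The plan is to obtain (i) and the closing assertion by assembling results already in place, and to concentrate the real work on (ii), with (iii) its exact mirror image. \emph{Part (i).} With $f=f'=g=g'=\mathrm{id}$, $i=i'$ and $\alpha=\mathrm{id}$, a Quillen adjoint triple $\mathrm{Q_{tr}}(\A_{1/2},\B_{1/2},\mathrm{id})$ in the sense of Definition~\ref{def:Quillen_adjoint_triple} consists of the two Quillen adjunctions $q\colon\B_1\rightleftarrows\A_1\colon i$ and $i\colon\A_2\rightleftarrows\B_2\colon p$, the two ``identity'' Quillen equivalences $\B_1\simeq\B_2$ and $\A_1\simeq\A_2$, and the requirement that the defining comparison transformation be a natural isomorphism. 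The first Quillen adjunction is provided by Lemma~\ref{lem:motiv} (the right-lifted structure $\A_1$ exists by hypothesis), the second by its dual. The identity $\B_1\to\B_2$ is a Quillen equivalence by assumption; for $\mathrm{id}\colon\A_1\to\A_2$ one notes that the two structures share their class of trivial objects $i^{-1}(\W_{\B})$, hence also their weak equivalences and homotopy category, so it suffices to check that $\mathrm{id}$ is left Quillen, i.e.\ that $\C_{\A_1}\subseteq\C_{\A_2}$; this is read off from the explicit form of the lifted cotorsion pairs together with the inclusion $\C_{\B_1}\subseteq\C_{\B_2}$ supplied by the Quillen equivalence on $\B$ (and it is immediate in the typical situation in which $\B_1$ has all objects fibrant and $\B_2$ all objects cofibrant, so that the same holds for $\A_1$ and $\A_2$). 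Finally, since $f,g$ are identities, $i=i'$, $\alpha=\mathrm{id}$ and $\B_1,\B_2$ have the same weak equivalences, Remark~\ref{rem:easy_case} tells us that the defining zig-zag (\ref{eq:composite_map_triples}) is automatically a natural isomorphism. This proves (i).

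\emph{Part (ii).} Since $i$ is fully faithful, ``$i\colon\A_1\to\B_1$ is a right-derived embedding'' means precisely that $\mathbf{R}i$ is fully faithful. The argument I would give applies Proposition~\ref{prop:fully_faithful_right_derived} to the Quillen adjunction $q\dashv i$ between the hereditary abelian model structures $\B_1$ and $\A_1$ (here $i$ is exact, having both adjoints, so $\A_1$ inherits hereditariness from $\B_1$ exactly as in the last paragraph of the proof of Theorem~\ref{thm:right_lifting}): the equivalence (i)$\Leftrightarrow$(iii) there says $\mathbf{R}i$ is fully faithful if and only if for every fibrant $X$ in $\A_1$ the cofibrant-replacement trivial fibration $q^{\B_1}_{iX}\colon Q_{\B_1}(iX)\to iX$ is carried by $q$ to a weak equivalence of $\A_1$. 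It then remains to translate this into the unit-map form of the statement. First, the weak equivalences of $\A_1$ are by construction the $i$-preimages of those of $\B_1$, so $q(q^{\B_1}_{iX})$ is a weak equivalence of $\A_1$ exactly when $iq(q^{\B_1}_{iX})$ is a weak equivalence of $\B_1$. Next I would consider the naturality square of the unit $\eta\colon\mathrm{id}_{\B_1}\Rightarrow iq$ of $q\dashv i$ evaluated at $q^{\B_1}_{iX}$: its two composites are $\eta_{iX}\circ q^{\B_1}_{iX}$ and $iq(q^{\B_1}_{iX})\circ\eta_{Q_{\B_1}(iX)}$, which agree by naturality; the triangle identity together with full faithfulness of $i$ makes $\eta_{iX}$ invertible, while $q^{\B_1}_{iX}$ is a weak equivalence, so this common composite is a weak equivalence, and the 2-out-of-3 property then yields that $iq(q^{\B_1}_{iX})$ is a weak equivalence if and only if $\eta_{Q_{\B_1}(iX)}\colon Q_{\B_1}(iX)\to(i\circ q)(Q_{\B_1}(iX))$ is. This settles fibrant $X$; to reach an arbitrary $X$ I would replace it by its $\A_1$-fibrant replacement $R_{\A_1}X$ and use that $i$ sends the weak equivalence $X\to R_{\A_1}X$ to a weak equivalence of $\B_1$, that the cofibrant replacement functor of $\B_1$ and the left Quillen functor $q$ preserve weak equivalences between cofibrant objects, and 2-out-of-3 on one further naturality square of $\eta$, to conclude that $\eta_{Q_{\B_1}(iX)}$ is a weak equivalence if and only if $\eta_{Q_{\B_1}(iR_{\A_1}X)}$ is.

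\emph{Part (iii) and the closing statement.} Part (iii) is obtained by the exact dual argument: full faithfulness of $i$ identifies ``$i\colon\A_2\to\B_2$ is a left-derived embedding'' with ``$\mathbf{L}i$ is fully faithful'', one applies Proposition~\ref{prop:fully_faithful_left_derived} (i)$\Leftrightarrow$(iii) to the Quillen adjunction $i\dashv p$ between $\A_2$ and $\B_2$, and then one runs the mirror-image argument with the counit $\epsilon\colon ip\Rightarrow\mathrm{id}_{\B_2}$ (invertible at objects of the form $iX$, by full faithfulness of $i$) evaluated at the fibrant-replacement trivial cofibration $r^{\B_2}_{iX}\colon iX\to R_{\B_2}(iX)$, reducing from cofibrant $X$ to arbitrary $X$ by cofibrant replacement in $\A_2$. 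For the last sentence: by (i) we have the Quillen adjoint triple $\mathrm{Q_{tr}}(\A_{1/2},\B_{1/2},\mathrm{id})$, so Proposition~\ref{prop:left_right_derived_embedding} (with $i=i'$), combined with the standing full faithfulness of $i$, says that $i\colon\A_1\to\B_1$ is a right-derived embedding if and only if $i\colon\A_2\to\B_2$ is a left-derived embedding; together with (ii) and (iii) this shows that the condition in (ii) holds if and only if the condition in (iii) holds, and in either case $i$ is a derived embedding by Definition~\ref{defn:derembedding}.

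I expect the only genuinely delicate point to be the translation step in (ii) -- and its dual in (iii) -- namely passing from the ``$q$ sends $q^{\B_1}_{iX}$ to a weak equivalence'' criterion of Proposition~\ref{prop:fully_faithful_right_derived}(iii) to the clean unit-map criterion of the statement, via the naturality square, the triangle identity and 2-out-of-3, together with the bookkeeping that removes the fibrancy (resp.\ cofibrancy) hypothesis; parts (i) and the closing sentence are essentially a formal assembly of Lemma~\ref{lem:motiv}, Remark~\ref{rem:easy_case}, Proposition~\ref{prop:left_right_derived_embedding} and Definition~\ref{defn:derembedding}.
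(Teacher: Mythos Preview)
Your proposal is correct and follows essentially the same route as the paper. Both arguments use Proposition~\ref{prop:fully_faithful_right_derived}(iii) as the pivot for part~(ii), translate the resulting condition via the definition of weak equivalences in the right-lifted structure, and finish with the same naturality square for the unit $\eta$ together with 2-out-of-3; part~(iii) is dual and the closing sentence is Definition~\ref{defn:derembedding}. Two minor differences: the paper states part~(ii) directly for arbitrary $X$ and does not spell out the fibrant-to-arbitrary reduction (in the intended applications $\A_1$ has all objects fibrant), whereas you add that bookkeeping explicitly; and for part~(i) the paper simply asserts that $\mathrm{id}\colon\A_1\to\A_2$ is a Quillen equivalence, while you sketch why it is left Quillen and invoke Remark~\ref{rem:easy_case} for the comparison isomorphism---both treatments are a touch informal about the inclusion $\C_{\A_1}\subseteq\C_{\A_2}$ in full generality, but this is harmless in the projective/injective setting that is actually used.
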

\begin{proof}
(i) We observe that the classes of weak equivalences (and of trivial objects) for both $\A_{\pi}$ and $\A_{\iota}$ coincide as preimages under $i$ of those in $\B_{1}$ and $\B_{2}$, thus it is easy to see that the identity map from $A_{\pi}$ to $\A_{\iota}$ is a Quillen equivalence. 
Hence we have a Quillen adjoint triple $\mathrm{Q_{tr}}(\A_{1/2},\B_{1/2},\alpha)$, as in Definition \ref{def:Quillen_adjoint_triple} where $f, f', g, g'$ are identity morphisms, $i=i'$ and $\alpha=\mathrm{id}$.
(ii) For an object $X$ in $\A$ consider the following commutative diagram in $\B$:
\begin{equation}
\label{eq:unit_map_right}
  \xymatrix@C=3pc{
(i\circ q)(Q_{\B_{1}}iX)  \ar[r] ^-{iq(\mathsf{q}_{iX}^{\B_{1}})}    &  iqi(X) \\
Q_{\B_{1}}iX \ar[u]_-{\eta_{QiX}}   \ar[r]_-{\mathsf{q}_{iX}^{\B_{1}}}^-{\sim} & iX  \ar[u] ^-{\cong}
  }\!
\nonumber
\end{equation}
and recall from \ref{prop:fully_faithful_right_derived} that $i$ is a right derived embedding if and only if $q$ maps the weak equivalence $\mathsf{q}_{iX}^{\B_{1}}\colon Q_{\B_{1}}iX\rightarrow iX$ to a weak equivalence $q(\mathsf{q}_{iX}^{\B_1})$ in $\A_{1}$. By the definition of weak equivalences in a right-transferred model structure, we obtain that $q(\mathsf{q}_{iX}^{\M_1})$ is a weak equivalence if and only if the top map in the above commutative diagram is. Now by the two-out-of-three property for weak equivalences the result follows. The proof of (iii) is dual to that of (ii). The last assertion follows by Definition \ref{defn:derembedding}.
\end{proof}

The following setup sets the stage for our general lifting theorem, proved in \ref{thm:main_lift}.

\begin{setup}
\label{Setup}
Consider a recollement of Grothendieck categories 
\begin{equation*}
  \xymatrix@C=4pc{
\A   \ar@<0.0ex>[r] |-{\, i\,}   &  
  \B \ar@/^1.4pc/[l] ^-{p}    \ar@/_1.4pc/[l] _-{q}  \ar[r] |-{\, e \,}&
  \C \ar@/_1.4pc/[l]  _-{l}  \ar@/^1.4pc/[l] ^-{r}
  }\!
\tag*{$\mathsf{R_{ab}}(\A,\B,\C)$}
\end{equation*}
such that the following hold:
\begin{itemize}
\item $\B$ admits two hereditary abelian model structures with Hovey triples $\B_{\mathsf{proj}}:=(\C_{\B},\W_{\B},\B)$ and $\B_{\mathsf{inj}}:=(\B,\W_{\B},\F_{\B})$ which are Quillen equivalent via the identity functor.
\item  $\C$ admits two hereditary abelian model structures with Hovey triples $\C_{\mathsf{proj}}:=(\C_{\C},\W_{\C},\C)$ and $\C_{\mathsf{inj}}:=(\C,\W_{\C},\F_{\C})$ which are Quillen equivalent via the identity functor.
\item The functor $e\colon \B\rightarrow\C$ preserves trivial objects, i.e., $e(\W_{\B})\subseteq\W_{\C}$.
\item The right transfer of $\B_{\mathsf{proj}}$ along the functor $i\colon \A\rightarrow\B$ exists. We denote it by $\A_{\pi}$.
\item The left transfer of $\B_{\mathsf{inj}}$ along the functor $i\colon \A\rightarrow\B$ exists. We denote it by $\A_{\iota}$.
\end{itemize}
\end{setup}

\begin{remark} 
\label{rem:comments_on_setup}
The following comments concerning Setup \ref{Setup} are in order.
\begin{itemize}
\item[(i)] By definition of Hovey triples (\ref{def:Hovey_triples}), the existence of $\B_{\mathsf{proj}}$ and $\C_{\mathsf{proj}}$ implies, in particular, the existence of complete cotorsion pairs $(\Proj(\B),\B)$ and $(\Proj(\C),\C)$, in other words, the Grothendieck categories $\B$ and $\C$ have enough projective objects.

\item[(ii)] Notice that the identity functor on $\B$ is a left Quillen functor $\mathrm{id}\colon\B_{\mathsf{proj}}\rightarrow \B_{\mathsf{inj}}$ and similarly for $\C$.

\item[(iii)] The class $\W_{\B}$ is commonly the class of trivial objects for the abelian model structures $\B_{\mathsf{proj}}$ and $\B_{\mathsf{inj}}$, thus they share the same weak equivalences (by the characterization of weak equivalences given in \cite[Prop.~2.3]{Gilsurvey}) and the same homotopy category, which we denote by $\Ho(\B)_{\mathsf{proj/inj}}$. The same comment applies to $\C_{\mathsf{proj}}$ and $\C_{\mathsf{inj}}$ which share the homotopy category $\Ho(\C)_{\mathsf{proj/inj}}$.

\item[(iv)] The right, resp., left,  transfer of $\B_{\mathsf{proj}}$, resp., $\B_{\mathsf{inj}}$ along $i$, assuming they exist, they share the same class of trivial objects, which is $i^{-1}(\W_{\B})$, thus they share the same homotopy category, which we denote by $\Ho(\A)_{\pi/\iota}$.

\item[(v)] Setup \ref{Setup} requires the existence of certain left and right transferred model structures. 
Theorems \ref{thm:right_lifting}/\ref{thm:left_lifting} give sufficient conditions for their existence in general, and usually it is hard to check if the acyclicity conditions hold. Working with projective and injective abelian model structures, as in Setup \ref{setup_lifting}, makes this an easier task. Indeed, in this case the right acyclicity condition from \ref{thm:right_lifting} reads $i(\Proj(\A))\subseteq \W_{\B}$, and dually, the left acyclicity condition from \ref{thm:left_lifting} reads $i(\Inj(\A))\subseteq \W_{\B}$. In applications these conditions are usually easy to check (so the above setup is not very restricting).
\end{itemize}
\end{remark}


We are now ready to state and prove our main result on lifting of recollements of abelian categories. 

\begin{theorem}
\label{thm:main_lift}
Let $\mathsf{R_{\mathsf{ab}}}(\A,\B,\C)$ be a recollement of Grothendieck categories satisfying Setup \ref{Setup}. Then there exists a diagram of triangulated categories 
\begin{equation}
\label{eq:dia_in_main}
\xymatrix@C=4pc{
\Ho(\A)_{\pi/\iota}  \ar@<0.0ex>[r] |-{\, \textbf{R}i\cong\textbf{L}i\,}   &  
\Ho(\B)_{\mathsf{\tiny{proj/inj}}}   \ar@/^1.4pc/[l] ^-{\textbf{R}p}    \ar@/_1.4pc/[l] _-{\textbf{L}q}  \ar[r] |-{\, \textbf{R}e\cong\textbf{L}e \,}&
\Ho(\C)_{\mathsf{proj/inj}} \ar@/_1.4pc/[l]  _-{\textbf{L}l}  \ar@/^1.4pc/[l] ^-{\textbf{R}r}
  }
\end{equation}
where $\Ho(\A)_{\pi/\iota}$ is the common homotopy category of the right and left transferred abelian model structures along the functor $i$. This diagram is a recollement if and only if the following conditions hold:
\begin{enumerate}
\item[(i)] The functor $i\colon \A\to \B$ is a derived embedding (as in Defintion \ref{defn:derembedding}).

\item[(ii)]  For all cofibrant objects $X$ in $\B_{\mathsf{proj}}$ that belong to $\Ker\textbf{R}e$, the unit map $X\rightarrow (i\circ q)X$ is a weak equivalence in $\B_{\mathsf{proj}}$. 
\end{enumerate}
Under the presence of (i), condition (ii) is equivalent to the following: $(ii)'$ For all fibrant objects $Y$ in $\B_{\mathsf{inj}}$ that belong to $\Ker\textbf{L}e$, the counit map $(i\circ p)Y\rightarrow Y$ is a weak equivalence in $\B_{\mathsf{inj}}.$ 
\end{theorem}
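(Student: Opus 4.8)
The plan is to first construct the diagram (\ref{eq:dia_in_main}) and then analyze when it is a recollement. For the construction, the key observation is that Setup \ref{Setup} yields \emph{two} Quillen adjoint triples of the special kind treated in Proposition \ref{prop:special_triple}: one along $i$ (using Proposition \ref{prop:lifting_of_induced}(i), since the transferred models $\A_\pi,\A_\iota$ exist and share weak equivalences) and one along $e$ (the pair $(l,e)$ is Quillen with respect to the projective structures, the pair $(e,r)$ with respect to the injective structures, the middle functor $e$ is exact, and $e$ preserves trivial objects by hypothesis, so the "easy case" of Remark \ref{rem:easy_case} applies with $\alpha=\mathrm{id}$). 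Proposition \ref{prop:special_triple} then gives $\mathbf{R}i\cong\mathbf{L}i$ and $\mathbf{R}e\cong\mathbf{L}e$ together with their adjoints $\mathbf{L}q,\mathbf{R}p$ and $\mathbf{L}l,\mathbf{R}r$; Proposition \ref{prop:left_Quillen_triang} and Corollary \ref{cor:triang_Quillen_adj} make all these functors triangulated. This produces the diagram.

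Next I would check the recollement axioms one at a time, in the standard order. Axiom "$\ker(\mathbf{R}e)\subseteq \mathrm{im}(\mathbf{R}i)$" and exactness of the sequence is the heart of the matter and is where conditions (i) and (ii) enter. The argument: $\mathbf{R}i\cong\mathbf{L}i$ is fully faithful precisely by condition (i) (this is Definition \ref{defn:derembedding}). For the image of $\mathbf{R}i$ to coincide with $\ker(\mathbf{L}e)$, one uses $\mathbf{L}e\circ\mathbf{L}i = \mathbf{L}(ei)=0$ (since $ei=0$ in the abelian recollement and both are left Quillen for the projective structures, so the composite is computed by composing left derived functors) to get one inclusion, and for the reverse inclusion one takes $Y\in\ker(\mathbf{L}e)$, represents it by a cofibrant object $X$ in $\B_{\mathsf{proj}}$, and shows the counit-type comparison $\mathbf{L}i\,\mathbf{L}q\,(X)\to X$ is an isomorphism in the homotopy category; unwinding the derived adjunction (as in the proof of Proposition \ref{prop:fully_faithful_right_derived}, via \cite[Prop.~1.3.13]{hoveybook}) this reduces exactly to condition (ii), the statement that $X\to (i\circ q)X$ is a weak equivalence in $\B_{\mathsf{proj}}$ for such $X$. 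Conversely, if the diagram is a recollement then $\mathbf{R}i$ is fully faithful, giving (i), and the counit $\mathbf{L}i\mathbf{L}q\to\mathrm{id}$ is an isomorphism on $\ker(\mathbf{L}e)$, which unwinds to (ii). The remaining recollement axioms — existence of the adjoint pairs $(\mathbf{L}q,\mathbf{R}i)$, $(\mathbf{L}i,\mathbf{R}p)$, $(\mathbf{L}l,\mathbf{R}e)$, $(\mathbf{L}e,\mathbf{R}r)$, and that $\mathbf{L}e$ (resp.\ $\mathbf{R}e$) is a quotient functor — follow formally from the two Quillen adjoint triples and the fact that in the abelian recollement $e$ has fully faithful adjoints $l,r$, so by Proposition \ref{prop:fully_faithful_right_derived}/\ref{prop:fully_faithful_left_derived} (together with $e$ preserving trivial objects, which makes condition (iii) there automatic) $\mathbf{L}l$ and $\mathbf{R}r$ are fully faithful; a triangulated functor with a fully faithful right adjoint is a Verdier quotient.

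For the final equivalence "(ii) $\Leftrightarrow$ (ii)$'$ under (i)", I would argue as follows. Condition (ii) says the counit $\mathbf{L}i\mathbf{L}q\to\mathrm{id}$ is an isomorphism on $\ker(\mathbf{L}e)$, and condition (ii)$'$ says the unit $\mathrm{id}\to\mathbf{R}i\mathbf{R}p$ is an isomorphism on $\ker(\mathbf{R}e)$. Since $\mathbf{R}e\cong\mathbf{L}e$, the two kernels are the same subcategory $\mathsf{K}$ of $\Ho(\B)_{\mathsf{proj/inj}}$. In a triangulated setting, for $j:=\mathbf{R}i\cong\mathbf{L}i$ fully faithful with left adjoint $\mathbf{L}q$ and right adjoint $\mathbf{R}p$, an object $B$ lies in the image of $j$ iff the counit $j\mathbf{L}q\,B\to B$ is an isomorphism iff the unit $B\to j\mathbf{R}p\,B$ is an isomorphism — all three conditions describe the essential image of the fully faithful $j$. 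Hence both (ii) and (ii)$'$ are equivalent to the single statement $\mathsf{K}\subseteq\mathrm{im}(j)$, so they are equivalent to each other. I expect this last part to be short once (i) has pinned down full faithfulness of $j$; the main obstacle in the whole proof is the careful bookkeeping in the "reverse inclusion $\ker(\mathbf{L}e)\subseteq\mathrm{im}(\mathbf{L}i)$" step — precisely translating the homotopy-categorical isomorphism of the derived counit into the concrete weak-equivalence statement (ii) on a cofibrant representative, which requires unwinding the formula for the derived counit and using that cofibrant objects need no cofibrant replacement.
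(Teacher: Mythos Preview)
Your proposal is correct and follows essentially the same route as the paper's proof: construct the two Quillen adjoint triples via Propositions~\ref{prop:lifting_of_induced}(i) and~\ref{prop:special_triple}, obtain full faithfulness of $\mathbf{L}l,\mathbf{R}r$ (the paper invokes Proposition~\ref{prop:left_right_fully_faithful} rather than Propositions~\ref{prop:fully_faithful_right_derived}/\ref{prop:fully_faithful_left_derived} directly, but either works since in the projective/injective models the replacement maps in condition~(iii) are identities), and reduce the recollement question to whether the essential image of the fully faithful $\mathbf{R}i\cong\mathbf{L}i$ coincides with $\ker(\mathbf{R}e)$, which unwinds to (i)+(ii). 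One cosmetic slip: you have ``unit'' and ``counit'' swapped throughout --- for $\mathbf{L}q\dashv\mathbf{R}i$ the relevant map is the \emph{unit} $X\to\mathbf{R}i\mathbf{L}q(X)$ (matching the unit $X\to(i\circ q)X$ in condition~(ii)), and for $\mathbf{L}i\dashv\mathbf{R}p$ it is the \emph{counit} $\mathbf{L}i\mathbf{R}p(Y)\to Y$ --- but this does not affect the structure of your argument.
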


\begin{proof}
To obtain the right-hand side of (\ref{eq:dia_in_main}), we consider the quotient functor $e\colon \B\to \C$, which induces a commutative diagram (expressing the identity natural transformation of the functor $e$)
\begin{equation*}
  \xymatrix@C=3pc{
\B_{\mathsf{proj}} \ar[r]^-{e}  \ar[d]^-{\mathsf{id}} &  \C_{\mathsf{proj}} \ar[d]^-{\mathsf{id}} \\
B_{\mathsf{inj}} \ar[r]^-{e} &  \C_{\mathsf{inj}}
  }\!
\end{equation*}
where the vertical identity functors are left Quillen. Since $e$ is assumed to preserve trivial objects, we also deduce that the functor on top of the above diagram is right Quillen and that the one on the bottom is left Quillen. Therefore we obtain a Quillen adjoint triple $\mathrm{Q_{tr}}(\B_{\mathsf{proj/inj}},\C_{\mathsf{proj/inj}},\mathrm{id})$ (as in Definition \ref{def:Quillen_adjoint_triple}), which by Proposition~\ref{prop:special_triple} induces an adjoint triple at the level of homotopy categories, hence we obtain the right-hand side of (\ref{eq:dia_in_main}). In addition, from Proposition~\ref{prop:left_right_fully_faithful} we deduce that the functors $\mathbf{L}e$ and $\mathbf{R}e$ in this adjoint triple are fully faithful.

For the left hand-side of (\ref{eq:dia_in_main}) we work as follows. The existence of the right-lifted and left-lifted model structures on $\A$ along the functor $i\colon \A\rightarrow \B$ as stated in Setup \ref{Setup}, induces, 
by Proposition \ref{prop:lifting_of_induced}(i), a Quillen adjoint triple $\mathrm{Q_{tr}}(\A_{\pi/\iota},\B_{\mathsf{proj/inj}},\mathrm{id})$, which
by Proposition~\ref{prop:special_triple} induces an adjoint triple at the level of homotopy categories; thus we obtain the left-hand side of diagram (\ref{eq:dia_in_main}).

In an expanded form, the diagram we have obtained at the level of homotopy categories is the following
\begin{equation}
\label{eq:expanded_dia}
  \xymatrix@C=3pc{
\Ho(\A)_{\pi} \ar[d]^-{\mathbf{L}\mathrm{id}}_{\cong} \ar[r]^-{\mathbf{R}i} & \Ho(\B)_{\mathsf{proj}}  \ar[r]^-{\mathbf{R}e}  \ar[d]^-{\mathbf{L}\mathrm{id}}_{\cong} &  \Ho(\C)_{\mathsf{proj}} \ar[d]^-{\mathbf{L}\mathrm{id}}_{\cong} \\
\Ho(\A)_{\iota} \ar[r]^-{\mathbf{L}i}& \Ho(\B)_{\mathsf{inj}}  \ar[r]^-{\mathbf{L}e} & \Ho(\C)_{\mathsf{inj}}
  }\!
\end{equation}

From this we deduce the following diagram of triangulated categories 
    \begin{equation}
      \xymatrix@!=0.8pc{   
      \Ho(\A)_{\pi} \, \ar[rrr]^{\mathbf{R}i}  \ar[dd]_{\mathbf{L}\mathrm{id}} \ar[drr]^-{j'}  & & &  \Ho(\B)_{\mathsf{proj}}  \ar[dd]^{\mathbf{L}\mathrm{id}}        \\
          {} & & \ker\mathbf{R}e \ \ \ar@{>->}[ur]^{j} \ar[dd]     &           \\
          \Ho(\A)_{\iota} \ar[drr]^-{k'}  \, 
          \ar@{..>}[rrr]^(0.40){\mathbf{L}i} & &  &  \Ho(\B)_{\mathsf{inj}}  
          \\
     &     &  \ker\mathbf{L}e \ \  \ar@{>->}[ur]^{k} & 
       }
       \nonumber
\end{equation}
where $j$ and $k$ denote the canonical inclusions, while $j'$ and $k'$ are induced by universal properties of kernels. Also, the canonical vertical functor between the kernels is an equivalence since the two right-most vertical functors in diagram (\ref{eq:expanded_dia}) are equivalences. We make the following observations: (a) $j'$ is an equivalence if and only if $k'$ is an equivalence, in which case diagram (\ref{eq:dia_in_main}) is a recollement of triangulated categories (and conversely, if diagram (\ref{eq:dia_in_main}) is a recollement then $j'$ and $k'$ are equivalences); (b) $\mathbf{R}i$ is fully faithful if and only if $j'$ is fully faithful, and similarly, $\mathbf{L}i$ is fully fathful if and only if $k'$ is fully faithful; (c) If $j'$ is fully faithful then its essential image consists of all objects $X$ in $\ker\mathbf{R}e$ such that the (derived) adjunction unit $X\to (\mathbf{R}i\circ\mathbf{L}q)(X)$ is an isomorphism in $\Ho(\B)_{\mathsf{proj}}$; (d) By a standard reduction to cofibrant objects \cite[Proposition~1.3.13]{hoveybook}, condition (ii) can be equivalently stated \textit{for all} objects $X\in\ker\mathbf{R}e$ and it is also equivalent to asking that for all objects $X\in\ker\mathbf{R}e$ the (derived) adjunction unit $X\to (\mathbf{R}i\circ\mathbf{L}q)(X)$ is an isomorphism.

In total, if (\ref{eq:dia_in_main}) is a recollement, by combining (a), (b), (c) and (d) we obtain that conditions (i) and (ii) are hold. Conversely, if (i) and (ii) hold, then by (b) $j'$ and $k'$ are fully faithful and by (c) and (d) they are essentially surjective too;  thus by (a) we deduce that (\ref{eq:dia_in_main}) is a recollement.

The fact that the the existence of the recollement in (\ref{eq:dia_in_main}) is equivalent to the validity of  (i)+(ii)' is left to the reader. 
\end{proof}

\section{Applications}
\label{sec:applications}

\subsection{Stable categories of Gorenstein projectives}

\subsubsection{Triangular matrix algebras}
\label{subsub:triangular}

In this section we investigate liftings of recollements of module categories to associated homotopy categories of Gorenstein projective and Gorenstein injective modules.

Let $R$ be a Iwanaga-Gorenstein ring, that is, $R$ is two-sided noetherian and has finite injective dimension on both sides. From Hovey \cite[Theorem~8.6]{hovey2002} (see also Holm \cite{holm}) there exist the following two hereditary Hovey triples on the category of left $R$--modules, $\Mod{R}$.

\begin{itemize}
\item $(\Mod{R})_{\mathsf{GP}}:=(\GProj{(R)},\mathcal{P}^{<\infty}(R),\Mod{R})$, where $\GProj(R)$ denotes the class of Gorenstein projective $R$--modules and $\mathcal{P}^{<\infty}(R)$ denotes the class of $R$--modules of finite projective dimension. 

\item $(\Mod{R})_{\mathsf{GI}}:=(\Mod{R},\mathcal{I}^{<\infty}(R),\GInj(R))$, where $\GInj(R)$ denotes the class of Gorenstein injective $R$--modules and $\mathcal{I}^{<\infty}(R)$ denotes the class of $R$--modules of finite injective dimension.
\end{itemize}
According to Definition \ref{def:proj_inj_models},  $(\Mod{R})_{\mathsf{GP}}$ is a projective model structure while its dual counterpart $(\Mod{R})_{\mathsf{GI}}$ is an injective model structure. 
The cotorsion pairs that constitute all the Hovey triples involved are known to be each generated by a set, see  \cite[Thm.~8.3/8.4]{hovey2002}, hence the associated model structures are cofibrantly generated by \cite[Lemma~6.7]{hovey2002}. Over an Iwanaga-Gorenstein ring, the classes of trivial objects for the above two model  structures coincide, that is, $\mathcal{P}^{<\infty}(R)=\mathcal{I}^{<\infty}(R)$, hence the classes of weak equivalences also coincide; we just denote them by $\W$. For the (common) associated homotopy category we use the notation 
\[
(\Mod{R})[\W^{-1}]:=\Ho((\Mod{R})_{\mathsf{GP} / \mathsf{GI}}):=\Ho(R)_{\mathsf{GP} / \mathsf{GI}}.
\]

It is easy to see that the identity functor on $\Mod(R)$ is a Quillen equivalence between $(\Mod{R})_{\mathsf{GP}}$ and $(\Mod{R})_{\mathsf{GI}}$ (here the identity is left Quillen from the projective model to the injective model).
In fact, the homotopy category of $(\Mod{R})_{\mathsf{GP}}$ is equivalent, as a triangulated category, to the stable category of Gorenstein projective $R$--modules, and dually for $(\Mod{R})_{\mathsf{GI}}$. The reader may consult \cite[Thm.~3.7/3.9]{Dal_Estrada_Holm} for detailed arguments and a generalization.

In the next example we consider transfers of these models along certain ring homomorphisms.

\begin{example}
\label{ex:Gor_morphism_triple}
Let $\phi\colon R\rightarrow S$ be a ring epimorphism where $R$ is Gorenstein and $S$ has finite projective dimension as a left $R$--module.
We consider the adjoint triple
\begin{equation}
\label{eq:Gor_example_triple}
\xymatrix@C=4pc{
\Mod{S}\ar[r]^{\, i:=\phi_{*}\, } & \Mod{R}\ar@/_1.4pc/[l]_-{q:=S\otimes_{R}-} \ar@/^1.4pc/[l]^-{p:=\Hom_{R}(S,-)} 
} 
\nonumber
\end{equation}
and we equip $\Mod{R}$ with two (Quillen equivalent) hereditary abelian model structures, $(\Mod{R})_{\mathsf{GP}}$ and $(\Mod{R})_{\mathsf{GI}}$, as explained above.
We recall the fact that $\phi$ is a ring epimorphism if and only if the restriction of scalars functor $\phi_{*}\colon\Mod{S}\rightarrow\Mod{R}$ is fully faithful; see \cite[Ch.XI]{stenstrom}.
In order to apply Theorem~\ref{thm:right_lifting} and Theorem~\ref{thm:left_lifting} (see also Remark~\ref{rem:comments_on_setup} (v)), we check that the following hold:
\begin{itemize}
\item[(i)] The functor $q$ maps cofibrations to monomorphisms. Indeed, let $0\rightarrow A\rightarrow B\rightarrow G\rightarrow 0$ be a short exact sequence with $G\in\GProj{R}$, i.e., a  cofibration in $(\Mod{R})_{\GProj}$, then the desired condition holds if $\Tor_{1}^{R}(S,G)=0$. Note that since $R$ is Iwanaga-Gorenstein, $G$ is Gorenstein flat (by \cite[Proposition~3.4]{holm}), and that $S\in\class P^{<\infty}(R)=\I^{<\infty}(R)$, thus from \cite[Theorem~3.14]{holm} we obtain the desired result.

\item[(ii)] $\phi_{*}(\Proj{S})\subseteq \class P^{<\infty}(R)$. This is clear since $S\in\class P^{<\infty}(R)$.

\item[(iii)] The functor $i$ maps the generator $S$ to a module of finite projective (=finite injective) dimension, by the assumption on $\phi\colon R\rightarrow S$.

\item[(iv)] $\phi_{*}(\Inj{S})\subseteq \I^{<\infty}(R)$. To see this consider an injective $S$--module $I$ and the natural isomorphism of functors $\RHom_{R}(-,\phi_{*}(I))\cong\RHom_{S}(S\otimes_{R}^{\textbf{L}}-,I)$ (for this formula see for instance \cite[Corollary~5.16]{CFH}). Since $I$ is injective and $S$ has finite flat dimension over $R$, the assertion follows.
\end{itemize}

This implies that the right-lifted and the left-lifted hereditary abelian model structures along the functor $i$ exist. We denote them by $(\Mod{S})_{\pi}$ and $(\Mod{S})_{\iota}$ respectively. The homotopy category of both model structures is the localization of $\Mod{S}$ to the class $i^{-1}(\W)$ (where $\W$ denotes the class of weak equivalences over the Iwanaga-Gorenstein ring $R$) and we denote it by $\Ho(S)_{\pi/\iota}$. Thus, we have the following commutative diagram (expressing the identity natural transformation of the functor $i$)
\begin{equation*}
  \xymatrix@C=3pc{
(\Mod{S})_{\pi} \ar[r]^-{i}  \ar[d]^-{\mathsf{id}} &  (\Mod{R})_{\GProj} \ar[d]^-{\mathsf{id}} \\
(\Mod{S})_{\iota} \ar[r]^-{i} &  (\Mod{R})_{\GInj}
  }\!
\end{equation*}
where the functor on top is right Quillen by construction of the right-lifted model structure, and the bottom functor is left Quillen by construction of the left-lifted model structure. Also, the vertical identity functors are Quillen equivalences. Hence we obtain a Quillen adjoint triple $\mathrm{Q_{tr}}((\Mod{S})_{\pi/\iota},(\Mod{R})_{\GProj/ \GInj},\mathrm{id})$ which by Proposition \ref{prop:special_triple} induces an adjoint triple of triangulated categories. 

\begin{equation}
\label{eq:adjoint_triple_g_example}
  \xymatrix@C=4pc{
\Ho(S)_{\pi/\iota}   \ar@<0.0ex>[r] ^-{\textbf{L}i\, \cong\,  \textbf{R}i}   &
\Ho(R)_{\mathsf{GP} / \mathsf{GI}}. \ar@/^1.5pc/[l] ^-{\textbf{R}p}    \ar@/_1.5pc/[l] _-{\textbf{L}q} 
  }\!  
\end{equation}
We point out that similar adjunctions have been studied in \cite{Bergh, OPS}. 
In our case, if the ring $S$ is Iwanaga-Gorenstein we obtain $\Ho(S)_{\pi/\iota}=\Ho(S)_{\mathsf{GP} / \mathsf{GI}}$ (since in this case all the involved models share the same class of trivial objects). But in general the cofibrant objects in $(\Mod{S})_{\pi}$ are given by the class $\leftperp{i^{-1}(\mathcal{P}^{<\infty}(R))}$ which we call relative Gorenstein projective $S$--modules (relative to the right adjoint $\phi_{*}:=i\colon \Mod{S}\rightarrow \Mod{R}$). In case $S$ is Gorenstein or $\phi=\mathrm{id}$ this class is just $\mathsf{GProj}(R)$. Similar comments apply to (relative) Gorenstein injectives.
\end{example}

\begin{example} (\textnormal{Gorenstein triangular matrix rings})
\label{ex:Gor_triang_matrix_RHS}
Consider the ring 
\[
R:=\begin{pmatrix} 
A & {_AM_B} \\
0 & B
\end{pmatrix}
\]
where $A$ and $B$ are Iwanaga-Gorenstein rings and $M$ is a $A$-$B$-bimodule which is finitely generated on both sides and has finite projective dimension on both sides. Then by \cite[Theorem~2.2, Lemma~2.3]{XiongZhang} the ring $R$ is Iwanaga-Gorenstein. For $e=\bigl(\begin{smallmatrix}
1 & 0 \\
0 & 0
\end{smallmatrix}\bigr)$, 
the recollement associated to the idempotent $e$ (see \cite[Example~3.10]{Psa_survey}) takes the form
\[
  \xymatrix@C=4pc{
\Mod{B}   \ar@<0.0ex>[r] |-{\, i\,}   &  
  \Mod{R} \ar@/^1.4pc/[l] ^-{p}    \ar@/_1.4pc/[l] _-{q}  \ar[r] |-{\, e \,}&
  \Mod{A}. \ar@/_1.4pc/[l]  _-{l}  \ar@/^1.4pc/[l] ^-{r}
  }
\]
where the functors $l, e, i, q$ are exact. The quotient functor $e\colon \Mod{R}\to \Mod{A}$ induces a commutative diagram (the identity natural transformation of $e$)
\begin{equation*}
  \xymatrix@C=3pc{
(\Mod{R})_{\mathsf{GP}} \ar[r]^-{e}  \ar[d]^-{\mathsf{id}} &  (\Mod{A})_{\mathsf{GP}} \ar[d]^-{\mathsf{id}} \\
(\Mod{R})_{\mathsf{GI}} \ar[r]^-{e} &  (\Mod{A})_{\mathsf{GI}}
  }\!
\end{equation*}
where the vertical identity functors are Quillen equivalences. We also claim that the top functor is right Quillen and that the bottom functor is left Quillen. We need to check that the functor $e$ on top sends trivial fibrations to trivial fibrations, equivalently, if $X$ is an $R$-module with $\pd_RX<\infty$ then we need to show that $\pd_Ae(X)<\infty$. Using that $(l,e)$ in an adjoint pair and $l$ is exact, it follows that $e$ preserves modules of finite injective dimension. By the Gorensteiness of $R$ and $A$, we obtain that $e$ preserves modules of finite projective dimension as well, thus the claim follows. 
By a similar argument it follows that the functor $e$ on the bottom is left Quillen. 
Therefore  by Proposition~\ref{prop:special_triple} we obtain an adjoint triple of triangulated categories

\begin{equation}
\label{eq:adjoint_triple_g_example_RHS}
  \xymatrix@C=4pc{
\Ho(R)_{\mathsf{GP} / \mathsf{GI}}   \ar@<0.0ex>[r] ^-{\textbf{L}e\, \cong\,  \textbf{R}e}   &
\Ho(A)_{\mathsf{GP} / \mathsf{GI}}. \ar@/^1.5pc/[l] ^-{\textbf{R}r}    \ar@/_1.5pc/[l] _-{\textbf{L}l} 
  }\! 
\end{equation}

We further claim that the functor $\mathbf{L}l$ is fully faithful (which is equivalent by Proposition~\ref{prop:left_right_derived_embedding} to $\mathbf{R}r$ being fully faithful). We may use Proposition \ref{prop:fully_faithful_left_derived}(iii) to show that $\mathbf{L}l$ is fully faithful. Take $X$ a cofibrant object in $\Mod{A}$, i.e.,\ $X$ lies in $\GProj{A}$. Then a fibrant replacement of $l(X)$ means that we only consider the identity map $\mathsf{Id}_{l(X)}\colon l(X)\to l(X)$ since in the Gorenstein projective abelian model structure all objects are fibrant. Thus, applying the functor $e$ we clearly get a weak equivalence since  $e\circ l\simeq \mathsf{Id}_{\Mod{A}}$. We conclude that $\mathbf{L}l$ is fully faithful. Hence diagram (\ref{eq:adjoint_triple_g_example_RHS}) constitutes the right hand side of a recollement of triangulated categories.
\end{example}

Now, from Example~\ref{ex:Gor_morphism_triple}, we deduce the existence of two Quillen equivalent model structures on $\Mod{B}$, that give rise to a Quillen adjoint triple as in (\ref{eq:adjoint_triple_g_example}). In the next result we show that putting together both the adjoint triples obtained induces a recollement of triangulated categories (for an analogous result cf.~\cite[Theorem~3.5]{Zhang}).

\begin{theorem}\label{thm:triangular_lifting}
Consider the ring $R:=\bigl(\begin{smallmatrix}
A & {_AM_B} \\
0 & B
\end{smallmatrix}\bigr)$ where $A$ and $B$ are Iwanaga-Gorenstein rings and $M$ is a $A$-$B$-bimodule which is finitely generated on both sides and has finite projective dimension on both sides. Then the recollement associated to the idempotent $e=\bigl(\begin{smallmatrix}
1 & 0 \\
0 & 0
\end{smallmatrix}\bigr)$,
lifts to a recollement of triangulated categories 
\begin{equation}
\label{eq:Gor_ex_big_diagram}
\xymatrix@C=4pc{
\Ho(B)_{\pi/\iota}  \ar@<0.0ex>[r] |-{\, \textbf{R}i\cong\textbf{L}i\,}   &  
\Ho(R)_{\mathsf{\tiny{GP/GI}}}   \ar@/^1.4pc/[l] ^-{\textbf{R}p}    \ar@/_1.4pc/[l] _-{\textbf{L}q}  \ar[r] |-{\, \textbf{R}e\cong\textbf{L}e \,}&
\Ho(A)_{\mathsf{GP/GI}} \ar@/_1.4pc/[l]  _-{\textbf{L}l}  \ar@/^1.4pc/[l] ^-{\textbf{R}r}
  }
\end{equation}
where $\Ho(B)_{\pi/\iota}$ is the \textnormal{(}common\textnormal{)} homotopy category of the right and left transferred abelian model structures along the forgetful functor $i$. In this case, the homotopy category $\Ho(B)_{\pi/\iota}$ coincides with $\Ho(B)_{\mathsf{\tiny{GP/GI}}}$.
\end{theorem}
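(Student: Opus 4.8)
The plan is to invoke the general lifting result, Theorem~\ref{thm:main_lift}, for the recollement $\mathsf{R_{ab}}(\Mod{B},\Mod{R},\Mod{A})$ attached to the idempotent $e$, taking $\B_{\mathsf{proj}}=(\Mod{R})_{\mathsf{GP}}$, $\B_{\mathsf{inj}}=(\Mod{R})_{\mathsf{GI}}$ and likewise on $\Mod{A}$. First one must check Setup~\ref{Setup}. The Hovey triples and the Quillen equivalences via the identity functor are available because $A$ is Iwanaga-Gorenstein and, by \cite[Thm.~2.2,~Lem.~2.3]{XiongZhang}, so is $R$; the requirement $e(\W_{\B})\subseteq\W_{\C}$ was already established in Example~\ref{ex:Gor_triang_matrix_RHS}. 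For the existence of the right transfer of $(\Mod{R})_{\mathsf{GP}}$ and of the left transfer of $(\Mod{R})_{\mathsf{GI}}$ along $i$, I would apply Example~\ref{ex:Gor_morphism_triple} to the canonical ring epimorphism $\phi\colon R\twoheadrightarrow R/ReR\cong B$; its restriction-of-scalars functor is exactly the functor $i$ of the recollement, with left adjoint $q=B\otimes_{R}-$ and right adjoint $p=\Hom_{R}(B,-)$. The only hypothesis of that example left to verify is $\pd_{R}({}_{R}B)<\infty$, which follows from the short exact sequence $0\to(M,0,0)\to R(1-e)\to{}_{R}B\to 0$, since $R(1-e)$ is projective over $R$ and $(M,0,0)=l(M)$ satisfies $\pd_{R}l(M)\leqslant\pd_{A}M<\infty$. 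This produces the transferred model structures $(\Mod{B})_{\pi}$ and $(\Mod{B})_{\iota}$, so Theorem~\ref{thm:main_lift} yields the diagram $(\ref{eq:Gor_ex_big_diagram})$, and it remains to verify conditions (i) and (ii) of that theorem.

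The computational core is the explicit form of the adjunction unit $\eta\colon\mathsf{Id}_{\Mod{R}}\Rightarrow i\circ q$. Writing left $R$-modules as triples $(X',Y',f')$ with $X'\in\Mod{A}$, $Y'\in\Mod{B}$, $f'\colon M\otimes_{B}Y'\to X'$, the functor $q$ is the projection onto the $B$-component, $i$ is $Y'\mapsto(0,Y',0)$, and $\eta_{(X',Y',f')}$ is the morphism $(X',Y',f')\to(0,Y',0)$ given by $(0,\mathsf{id}_{Y'})$, whose kernel is $(X',0,0)=l(X')=l(e(X',Y',f'))$. Hence for any $R$-module $P$ the map $\eta_{P}$ is a weak equivalence in $(\Mod{R})_{\mathsf{GP}}$ if and only if $\pd_{R}l(e(P))<\infty$. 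Now $l$ is exact and sends projectives to projectives, while $e$ is exact and sends the indecomposable projective $R$-modules $Re$ and $R(1-e)$ to $A$ and $M$, both of finite projective dimension over $A$; consequently $e$ takes $R$-modules of finite projective dimension to $A$-modules of finite projective dimension, and $\pd_{R}l(e(P))<\infty$ is equivalent to $\pd_{A}e(P)<\infty$.

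Granting this, condition (ii) is immediate: a cofibrant object $X=(X',Y',f')$ of $(\Mod{R})_{\mathsf{GP}}$ lies in $\Ker\mathbf{R}e=\Ker\mathbf{L}e$ precisely when $e(X)=X'$ is trivial in $(\Mod{A})_{\mathsf{GP}}$, i.e.\ $\pd_{A}X'<\infty$, and then $\eta_{X}$ is a weak equivalence by the previous paragraph. For condition (i), that $i$ is a derived embedding, I would use Proposition~\ref{prop:lifting_of_induced}(ii): it suffices that for every $B$-module $X$ the unit $\eta_{P}$ at $P:=Q_{(\Mod{R})_{\mathsf{GP}}}(iX)$ be a weak equivalence. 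Applying the exact functor $e$ to the cofibrant-replacement sequence $0\to K\to P\to iX\to 0$, in which $\pd_{R}K<\infty$, and using $e(iX)=e(0,X,0)=0$, one gets $e(P)\cong e(K)$, which has finite projective dimension over $A$; hence $\pd_{R}l(e(P))<\infty$ and $\eta_{P}$ is a weak equivalence. Therefore both (i) and (ii) hold, and Theorem~\ref{thm:main_lift} gives that $(\ref{eq:Gor_ex_big_diagram})$ is a recollement (equivalently one checks (i)+(ii)$'$ via the dual counit computation). I expect the only real care to be in matching the abstract unit/counit criteria of Propositions~\ref{prop:lifting_of_induced} and~\ref{prop:fully_faithful_right_derived} with the six explicit functors of the triangular-matrix recollement; the hypothesis $\pd_{A}M<\infty$ enters exactly at the steps where $e$ is claimed to preserve finite projective dimension.

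It remains to identify $\Ho(B)_{\pi/\iota}$ with $\Ho(B)_{\mathsf{GP/GI}}$. Since $B$ is Iwanaga-Gorenstein, the models $(\Mod{B})_{\mathsf{GP}}$ and $(\Mod{B})_{\mathsf{GI}}$ exist with common class of trivial objects $\mathcal{P}^{<\infty}(B)=\mathcal{I}^{<\infty}(B)$, so it suffices to prove $i^{-1}(\mathcal{P}^{<\infty}(R))=\mathcal{P}^{<\infty}(B)$. For the inclusion $\subseteq$ one applies the exact functor $q$, which sends projective $R$-modules to projective $B$-modules as $q(Re)=0$ and $q(R(1-e))=B$, to a finite projective resolution. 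Conversely, if $\pd_{B}N=m<\infty$, applying the exact functor $i$ to a length-$m$ projective resolution of $N$ over $B$ yields a length-$m$ resolution of $(0,N,0)$ by modules of the form $(0,Q_{j},0)$, each a direct summand of a direct sum of copies of ${}_{R}B$ and hence of projective dimension at most $\pd_{A}M+1$ over $R$ by the short exact sequence above; thus $\pd_{R}(0,N,0)\leqslant\pd_{A}M+1+m<\infty$. Since the two transferred models then share their weak equivalences with $(\Mod{B})_{\mathsf{GP}}$ and $(\Mod{B})_{\mathsf{GI}}$ respectively, their common homotopy category is $\Ho(B)_{\mathsf{GP/GI}}$.
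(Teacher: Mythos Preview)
Your proof is correct and follows essentially the same route as the paper's: invoke Theorem~\ref{thm:main_lift} after assembling Setup~\ref{Setup} from Examples~\ref{ex:Gor_morphism_triple} and~\ref{ex:Gor_triang_matrix_RHS}, and then verify conditions (i) and (ii) via the canonical short exact sequence $0\to (l\circ e)(-)\to(-)\to(i\circ q)(-)\to 0$, reducing both to the fact that $e$ preserves finite projective dimension. The only noticeable variation is in the final identification $i^{-1}(\mathcal{P}^{<\infty}(R))=\mathcal{P}^{<\infty}(B)$: for the inclusion $\supseteq$ the paper passes through injective dimension using Gorensteinness of $B$ and $R$ together with the fact that $i$ preserves injectives, whereas you bound $\pd_{R}i(N)$ directly from the explicit short exact sequence $0\to l(M)\to R(1-e)\to{}_{R}B\to 0$; both arguments are fine and of comparable length.
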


\begin{proof}
To obtain a diagram of triangulated categories as in (\ref{eq:Gor_ex_big_diagram}) we put together diagram (\ref{eq:adjoint_triple_g_example_RHS}) from Example~\ref{ex:Gor_triang_matrix_RHS} (which is the right hand side of a recollement) and diagram (\ref{eq:adjoint_triple_g_example}) from Example~\ref{ex:Gor_morphism_triple}. It remains to prove that the two conditions of Theorem~\ref{thm:main_lift} are satisfied.

To prove the validity of condition (i), it suffices to show that the functor\\ $i\colon(\Mod{B})_{\pi}\rightarrow (\Mod{R})_{\mathsf{GP}}$ is a right derived embedding, or equivalently, from Proposition~\ref{prop:lifting_of_induced}, to show that for any object $X$ in $\Mod{B}$, the unit map $$\eta_{QiX}\colon (i\circ q)(QiX)\rightarrow QiX$$ is a weak equivalence in $(\Mod{R})_{\mathsf{GP}}$, where $0\rightarrow W\rightarrow QiX\xrightarrow{\sim} iX\rightarrow 0$ is a cofibrant replacement (Gorenstein projective approximation) of $iX$ in $(\Mod{R})_{\mathsf{GP}}$. We observe that $eW\cong eQiX$, thus $eQiX$ is a trivial object in $(\Mod{A})_{\mathsf{GP}}$. Therefore, $(l\circ e) QiX$ is a trivial object $(\Mod{R})_{\mathsf{GP}}$ (since $l$ preserves trivial objects). The assertion now follows from the canonical short exact sequence 
\[
\xymatrix@C=2pc{
0 \ar[r] &  (l\circ e)(QiX) \ar[r]^-{\epsilon_{QiX}} & QiX \ar[r]^-{\eta_{QiX}} & (i\circ q)(QiX) \ar[r] & 0. 
} \]

To prove the validity of condition (ii) from Theorem \ref{thm:main_lift}, let $M$ be cofibrant (Gorenstein projetive) $R$--module in $\Ker\mathbf{R}e$, and consider the short exact sequence 
\[
\xymatrix@C=2pc{
0 \ar[r] &  (l\circ e)(M) \ar[r]^-{\epsilon_{M}} & M \ar[r]^-{\eta_{M}} & (i\circ q)(M) \ar[r] & 0. 
} \]
We want to prove that ${\eta_{M}}$ is a weak equivalence in  $(\Mod{R})_{\mathsf{GP}}$. For this it suffices to show that  $(l\circ e)(M)$  is a trivial object in $(\Mod{R})_{\mathsf{GP}}$. To that end, note that $eX=\mathbf{R}e(X)$ is a trivial object by assumption and that the functor $l$ is exact and preserves projectives, thus it maps $\mathcal{P}^{<\infty}(A)$ to $\mathcal{P}^{<\infty}(R)$, i.e. it preserves trivial objects; this shows that $(l\circ e)(M)$ is a trivial object.

Finally, we prove that $\Ho(B)_{\pi/\iota}$ coincides with $\Ho(B)_{\mathsf{\tiny{GP/GI}}}$. It is enough to show that the trivial classes of objects for these two abelian model structures coincide, i.e., that $i^{-1}(\mathcal{P}^{<\infty}(R))=\mathcal{P}^{<\infty}(B)$. Let $Y$ be a $B$-module such that $\pd_Ri(Y)<\infty$. Using that $(q, i)$ is an adjoint pair with both functors exact and that $q\circ i\simeq  \mathsf{Id}_{\Mod{B}}$, we deduce that $\pd_BY<\infty$. Suppose now that the $B$-module $Y$ has finite projective dimension. Since $B$ is Iwanaga-Gorenstein, $Y$ also has finite injective dimension. Again by the adjunction $(q,i)$ we obtain that the injective dimension of the $R$--module $i(Y)$ is finite, and since $R$ is Iwanaga-Gorenstein, we conclude that $\pd_Ri(Y)<\infty$, i.e., $Y\in\W_{B}$.
\end{proof}

\subsubsection{n-morphism categories}

Let $Q$ be a quiver with a finite number of vertices $Q_0$ and a finite number of arrows $Q_1$, let $\C$ be an abelian category with enough projective and injective objects and denote by $\Rep_{Q}(\C)$ the representations of $Q$ in $\C$. For any vertex $i$ in $Q$, there exist two canonical morphisms in $\C$, 
\[\bigoplus\limits_{\alpha:j\rightarrow i}X(j)\xrightarrow{\phi_{i}^{X}} X(i)\,\,\,\,\,\,\,\,\,\, \mbox{and}\,\,\,\,\,\,\,\,\,\, X(i)\xrightarrow{\psi_{i}^{X}} \prod\limits_{\alpha:i\rightarrow j}X(j).\]
Following \cite{HJ} for any class of objects $\X$ in $\C$ we consider two subcategories of $\Rep_{Q}(\C)$ as follows:
\begin{itemize}
\item[-] $\Phi(\X):=\{F\,|\, \forall c\in\C,\, \phi_{i}^{F}\,\, \mbox{is mono and}\, F(c), \Coker{\phi_{i}^{F}}\in \X, \, \forall i\in Q_0\}.$

\item[-] $\Psi(\X):=\{F\,|\, \forall c\in\C,\, \psi_{i}^{F}\,\, \mbox{is epi and}\, F(c), \Ker{\phi_{i}^{F}}\in \X, \, \forall i\in Q_0\}.$
\end{itemize}

Let $R$ be an Iwanaga-Gorenstein ring. The hereditary Hovey triples $(\Mod{R})_{\mathsf{GP}}$ and $(\Mod{R})_{\mathsf{GI}}$ that we recalled in \ref{subsub:triangular}, from \cite[Theorems~A and B]{HJ}, lift to the following hereditary Hovey triples on $\Rep_Q(\Mod{R})$,
\begin{itemize}
\item[-] $\Rep_{Q}(\Mod{R})_{\mathsf{GP}}:=(\Phi(\GProj{R}),\Rep_Q(\mathcal{P}^{<\infty}(R)),\Rep_{Q}(R))$,

\item[-]  $\Rep_{Q}(\Mod{R})_{\mathsf{GI}}:=(\Rep_{Q}(R),\Rep_{Q}(\I^{<\infty}(R)),\Psi(\GInj(R)))$,
\end{itemize}
which both have their associated complete cotorsion pairs generated by sets. In fact, from \cite[Theorem~3.5.1]{Esh} the class $\Phi(\GProj{R})$ is precisely the class of (categorical) Gorenstein projectives in $\Rep_{Q}(\Mod{R})$ and dually for $\Psi(\GInj(R))$; thus 
the homotopy category of $\Rep_{Q}(\Mod{R})_{\mathsf{GP}}$ (resp., $\Rep_{Q}(\Mod{R})_{\mathsf{GI}}$) is the stable category of Gorenstein projective (resp., injective) representations and there is also a Quillen equivalence between these by \cite[Thm.~4.8]{Dal_Estrada_Holm}.

Now, consider the Dynkin quiver $Q:=\mathbb{A}_n\colon$
\[
1\rightarrow 2 \rightarrow \cdots  \rightarrow n
\]
and the representation category $\Rep_{\mathbb{A}_n}(\Mod{R})$, also known as the {\bf $n$-morphism category} of the Iwanaga-Gorenstein ring $R$. In this case, the category $\Rep_{\mathbb{A}_n}(\Mod{R})$, also denoted by $\mathsf{Mor}_n(R)$, is equivalent to the module category $\Mod{\mathsf{T}_{n}(R)}$ where $\mathsf{T}_{n}(R)$ is the $n\times n$ triangular matrix ring:
\[
\mathsf{T}_{n}(R)=\begin{pmatrix}
 R & 0 & \cdots & 0\\
 R & R & \cdots  & 0\\
 \vdots & \vdots   & \ddots & \vdots  \\
  R & R & \cdots  & R\\
\end{pmatrix}
\]
Recall that the objects of the $n$-morphism category $\mathsf{Mor}_n(R)$ are sequences of the form 
\[
(X, f)\colon \, X_{1}\xrightarrow{f_1} X_{2}\xrightarrow{f_2}\cdots \to X_{n-1}\xrightarrow{f_{n-1}} X_{n}
\] 
such that all $X_{i}$ lie in $\Mod{R}$. Let $(X, f)$ and $(X', f')$ two objects in $\mathsf{Mor}_n(R)$. A morphism between them is a commutative diagram$\colon$
\[
\xymatrix{
 X_1 \ar[d]^{a_1} \ar[r]^{f_1 \ } & X_2 \ar[d]^{a_2} \ar[r]^{f_2 \ } & \cdots \ar[r]^{f_{n-1} \ } & X_{n} \ar[d]^{a_{n}} \\
 X_1' \ar[r]^{f_1' \ } & X_2' \ar[r]^{f_2' \ } & \cdots \ar[r]^{f_{n-1}' \ } & X_{n}' }
\]
that is,  $f_{i}'a_i=a_{i+1}f_{i}$ for all $1\leq i\leq n-1$, where $a_i\colon A_i\to A_i'$ are morphisms in $\Mod{R}$ for all $1\leq i\leq n$. 
From \cite[Example~4.9]{ladders} we have the following recollement:

\begin{equation}
\label{eq:heights}
\xymatrix@C=0.5cm{
\mathsf{Mor}_{n-1}(R) \ar[rrr]^{i} &&& \mathsf{Mor}_{n}(R)  \ar[rrr]^{e} \ar @/_1.5pc/[lll]_{q}  \ar @/^1.5pc/[lll]^{p} &&& \Mod{R} 
\ar @/_1.5pc/[lll]_{l} \ar
 @/^1.5pc/[lll]^{r}
 }
\end{equation}
The above functors are defined as follows:
\[
\left\{
  \begin{array}{lll}
    l(X)=(0\to 0\to \cdots \to X)  & \hbox{} \\
           & \hbox{} \\
   e(X_1\to X_2\to \cdots \to X_n)=X_n  & \hbox{} \\
   & \hbox{} \\
   r(X)=(X\xrightarrow{1_{X}} X\xrightarrow{1_{X}} \cdots \xrightarrow{1_{X}} X)  & \hbox{} 
  \end{array}
\right.
\]
\smallskip

\[
\left\{
  \begin{array}{lll}
   q(X_1\to X_2\to \cdots \to X_n)=(X_1\to X_2\to \cdots \to X_{n-1})  & \hbox{} \\
           & \hbox{} \\
   i(X_1\to X_2\to \cdots \to X_{n-1})=(X_1\to X_2\to \cdots \to X_{n-1}\to 0) & \hbox{} \\
   & \hbox{} \\        
   p(X_1\xrightarrow{f_{1}}  \cdots \xrightarrow{f_{n-1}} X_n)=(\Ker{(f_{n-1}\cdots f_2 f_1)}\to\Ker{(f_{n-1}\cdots f_2)}\to  \cdots \to\Ker{f_{n-1}}) & \hbox{} 
  \end{array}
\right.
\]
We claim that recollement (\ref{eq:heights}) lifts to a triangulated recollement of homotopy categories (the precise statement is Theorem~\ref{thm:n-lifting} below).

Consider the 2-cell (the identity natural transformation of the functor $e$), 
\[
  \xymatrix@C=3pc{
\big(\mathsf{Mor}_{n}R\big)_{\mathsf{GP}} \ar[r]^-{e}  \ar[d]^-{\mathrm{id}} &  (\Mod{R})_{\mathsf{GP}} \ar[d]^-{\mathrm{id}} \\
 \big(\mathsf{Mor}_{n}R\big)_{\mathsf{GI}} \ar[r]^-{e} &  (\Mod{R})_{\mathsf{GI.}}
  }
\]
We claim that the top functor $e$ is right Quillen. Since $e$ is exact it suffices to argue that it preserves fibrant and trivially fibrant objects. The first assertion is trivial since projective model structures have all objects fibrant. Now, if $(X_1\to \cdots \to X_n)\in \Rep_Q(\mathcal{P}^{<\infty}(R))$ (i.e.\ all $X_i$ have finite projective dimension) then $e(X_1\to \cdots \to X_n)=X_n$ is trivial and therefore the functor $e$ is right Quillen. Similarly, one can show that the bottom functor $e$ is left Quillen. In addition, the vertical identity functors are (left) Quillen equivalences (this follows from instance from \cite[Thm.~4.8]{Dal_Estrada_Holm}).

We now check the conditions of Theorem~\ref{thm:right_lifting} on the right-lifting of the abelian model structure $(\mathsf{Mor}_{n}(R))_{\mathsf{GP}}$ along the functor $i$. Let us first check that the functor $q$ sends  cofibrations in $(\mathsf{Mor}_{n}(R))_{\mathsf{GP}}$ to monomorphisms in $\mathsf{Mor}_{n-1}(R)$. A cofibration in $(\mathsf{Mor}_{n}(R))_{\mathsf{GP}}$ is a morphism of sequences $(X_1\to \cdots \to X_n)\to (Y_1\to \cdots \to Y_n)$, where all morphisms $X_i\to Y_i$ are certain monomorphisms. Applying the functor $q$ to such a morphism of sequences will delete the last monomorphism $X_n\to Y_n$ and the resulting morphism of sequences will be a monomorphism now in $\mathsf{Mor}_{n-1}(R)$. It remains to show that the right acyclicity condition $\leftperp{\F_{\mathsf{Mor}_{n-1}(R)}}\subseteq \W_{\mathsf{Mor}_{n-1}(R)}$ holds, where $\F_{\mathsf{Mor}_{n-1}(R)}=i^{-1}(\F_{\mathsf{Mor}_{n}(R)})$, $\W_{\mathsf{Mor}_{n-1}(R)}=i^{-1}(\W_{\mathsf{Mor}_{n}(R)})$. The fibrant objects $\F_{\mathsf{Mor}_{n-1}(R)}$ is the whole category $\mathsf{Mor}_{n-1}(R)$ since $\F_{\mathsf{Mor}_{n}(R)}=\mathsf{Mor}_{n}(R)$. Thus, the left perpendicular class $\leftperp{\F_{\mathsf{Mor}_{n-1}(R)}}$ consists of the projective objects of $\mathsf{Mor}_{n-1}(R)$. From \cite[Theorem~A]{HJ} these are sequences $P:=(P_1\to \cdots \to P_n)$ where all $P_i$ are projective $R$-modules and the maps $\phi_i^P$ are split monomorphisms. In particular, $i(P)=(P_1\to \cdots \to P_n\to 0)$ which clearly lies in $\W_{\mathsf{Mor}_{n}(R)}$. Hence the right-lifted hereditary abelian model structure $\mathsf{Mor}_{n-1}(R)_{\pi}$ exists. In fact, it easy to see that in this case the class of trivial objects of this model structure is identified with all objects in $\mathsf{Mor}_{n-1}(R)$ that degreewise have modules in $\mathcal{P}^{<\infty}(R)$. Hence the homotopy category $\Ho(\mathsf{Mor}_{n-1}(R))_{\pi}$ is identified with $\Ho(\mathsf{Mor}_{n-1}(R))_{\mathsf{GP}}$.
Similarly, one can show that the conditions of Theorem~\ref{thm:left_lifting} about the left-lifting of $(\mathsf{Mor}_{n}(R))_{\mathsf{GI}}$ along $i$ are fulfilled, and that $\Ho(\mathsf{Mor}_{n-1}(R))_{\mathsf{\iota}}$ is identified with $\Ho(\mathsf{Mor}_{n-1}(R))_{\mathsf{GI}}$.

The discussion so far shows that the recollement (\ref{eq:heights}) satisfies Setup \ref{Setup}. We now check that the two conditions from Theorem~\ref{thm:main_lift} are satisfied. 

We claim that $i$ is a right (or equivalently, a left) derived embedding. For this we use Proposition~\ref{prop:fully_faithful_right_derived} (iii). Take an object 
$(X_1\to \cdots \to X_{n-1})\in \mathsf{Mor}_{n-1}(R)$. Then applying the functor $i$ we get the sequence $(X_1\to \cdots \to X_{n-1}\to 0)$. We consider 
a cofibrant replacement of $(X_1\to \cdots \to X_{n-1}\to 0)$ in the abelian model structure $\mathsf{Mor}_{n}(R)_{\mathsf{GP}}$; in particular, this means that we have the following commutative diagram where the columns are short exact sequences:
\[
\xymatrix{
Z_1 \ar[d]^{} \ar[r]^{ } & Z_2 \ar[d]^{} \ar[r]^{ } & \cdots \ar[r]^{} & Z_{n-1} \ar[d]^{} \ar[r] & Z_n\ar[d] \\
 P_1 \ar[d]^{} \ar[r]^{ } & P_2 \ar[d]^{} \ar[r]^{ } & \cdots \ar[r]^{} & P_{n-1} \ar[d]^{} \ar[r] & P_n\ar[d] \\
 X_1 \ar[r]^{ } & X_2 \ar[r]^{ } & \cdots \ar[r]^{} & X_{n-1} \ar[r] & 0, }
\] 
here the $P_i$ are Gorenstein projective and the modules $Z_i$ have finite projective dimension. Applying the functor $q$ to the above diagram,  basically deletes the last column on its right hand side. Since the sequence $(P_1\to \cdots \to P_{n-1})$ lies in $\Phi(\GProj{\mathsf{Mor}_{n-1}(R)})$, it follows that the resulting morphism $(P_1\to \cdots \to P_{n-1})\to (X_1\to \cdots \to X_{n-1})$ is a cofibrant replacemenet ($\GProj$-approximation) in $\mathsf{Mor}_{n-1}(R)$. We infer that the functor $i$ is a right derived embedding. 

It remains to check condition (ii) of Theorem~\ref{thm:main_lift}. Let $X:=(X_1\to \cdots \to X_{n-1}\to X_n)$ be a cofibrant object in $\mathsf{Mor}_{n}(R)_{\mathsf{GP}}$, such that $e(X)$ is a trivial object, i.e.\ the $R$-module $X_n$ has finite projective dimension. We can easily see that the unit map $\eta_{X}\colon X\rightarrow (i\circ q)X$, which is the following map, 
\[
\xymatrix{
 X_1 \ar[d]^{\mathrm{id}} \ar[r]^{ } & X_2 \ar[d]^{\mathrm{id}} \ar[r]^{ } & \cdots \ar[r]^{} & X_{n-1} \ar[d]^{\mathrm{id}} \ar[r] & X_n\ar[d] \\
 X_1 \ar[r]^{ } & X_2 \ar[r]^{ } & \cdots \ar[r]^{} & X_{n-1} \ar[r] & 0 }
\] 
is a weak equivalence in $\mathsf{Mor}_{n}(R)_{\mathsf{GP}}$ (it is an epimorphism with trivial kernel, so in particular, a weak equivalence). 
Hence Theorem~\ref{thm:main_lift} applies and we obtain:
\begin{theorem}
\label{thm:n-lifting}
The recollement of abelian categories (\ref{eq:heights}) lifts to the following recollement of triangulated categories:
\[
\xymatrix@C=4pc{
\Ho(\mathsf{Mor}_{n-1}(R))_{\pi/\iota}  \ar@<0.0ex>[r] |-{\, \textbf{R}i\cong\textbf{L}i\,}   &  
\Ho(\mathsf{Mor}_{n}(R))_{\mathsf{\tiny{GP/GI}}}   \ar@/^1.4pc/[l] ^-{\textbf{R}p}    \ar@/_1.4pc/[l] _-{\textbf{L}q}  \ar[r] |-{\, \textbf{R}e\cong\textbf{L}e \,}&
\Ho(\Mod{R})_{\mathsf{GP/GI}} \ar@/_1.4pc/[l]  _-{\textbf{L}l}  \ar@/^1.4pc/[l] ^-{\textbf{R}r}
  }
\]
where $\Ho(\mathsf{Mor}_{n-1}(R))_{\pi/\iota}$ is the common homotopy category of the right and left transferred abelian model structures along the functor $i$. In fact, the homotopy category $\Ho(\mathsf{Mor}_{n-1}(R))_{\pi/\iota}$ coincides with $\Ho(\mathsf{Mor}_{n-1}(R))_{\mathsf{\tiny{GP/GI}}}$.
\end{theorem}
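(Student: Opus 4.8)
The plan is to realise Theorem~\ref{thm:n-lifting} as an instance of the general lifting theorem~\ref{thm:main_lift}. Thus the task is to verify that recollement~(\ref{eq:heights}), together with the Gorenstein-projective and Gorenstein-injective model structures on $\mathsf{Mor}_n(R)$ and on $\Mod{R}$, satisfies Setup~\ref{Setup}, and then to check conditions (i) and (ii) of that theorem.

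First I would assemble the homotopical data. By \cite{hovey2002} and \cite{HJ}, both $\Mod{R}$ and $\mathsf{Mor}_n(R)\simeq\Mod{\mathsf{T}_n(R)}$ carry the hereditary Hovey triples $(\Mod{R})_{\mathsf{GP}}$, $(\Mod{R})_{\mathsf{GI}}$ and $\mathsf{Mor}_n(R)_{\mathsf{GP}}$, $\mathsf{Mor}_n(R)_{\mathsf{GI}}$, all with set-generated cotorsion pairs, and in each case the identity functor is a Quillen equivalence from the projective to the injective model; moreover the quotient functor $e$ is exact and sends a representation with every term in $\mathcal{P}^{<\infty}(R)$ to its last term, hence preserves trivial objects. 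Next I would build the two transfers required by Setup~\ref{Setup}. Theorem~\ref{thm:right_lifting} applies to $\mathsf{Mor}_n(R)_{\mathsf{GP}}$ along $i$: the functor $q$ merely deletes the last monomorphism of a cofibration, so it sends cofibrations to monomorphisms; the cotorsion pairs are set-generated; and, by Remark~\ref{rem:comments_on_setup}(v), the right acyclicity condition reduces to $i$ carrying the projective objects of $\mathsf{Mor}_{n-1}(R)$ into $\W_{\mathsf{Mor}_n(R)}$, which holds since by \cite{HJ} those projectives are sequences of projective $R$-modules and $i$ just appends a zero term. Dually, Theorem~\ref{thm:left_lifting} produces the left transfer of $\mathsf{Mor}_n(R)_{\mathsf{GI}}$ along $i$. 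With Setup~\ref{Setup} in place, Proposition~\ref{prop:special_triple} already yields the displayed diagram of triangulated categories, and since $l$ and $r$ are fully faithful, Proposition~\ref{prop:left_right_fully_faithful} makes $\mathbf{L}l$ and $\mathbf{R}r$ fully faithful, i.e.\ the right half of~(\ref{eq:heights}) lifts.

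It then remains to verify the two conditions of Theorem~\ref{thm:main_lift}, which concern the left half. For condition (i), that $i$ is a derived embedding, I would use Proposition~\ref{prop:fully_faithful_right_derived}(iii) (equivalently Proposition~\ref{prop:lifting_of_induced}): given $(X_1\to\cdots\to X_{n-1})$, take a cofibrant replacement (a $\GProj$-approximation) $(P_1\to\cdots\to P_n)\twoheadrightarrow(X_1\to\cdots\to X_{n-1}\to 0)$ in $\mathsf{Mor}_n(R)_{\mathsf{GP}}$, whose kernel is degreewise of finite projective dimension; applying $q$ deletes the last column, and since $(P_1\to\cdots\to P_{n-1})$ still lies in $\Phi(\GProj{R})$, the result is a $\GProj$-approximation of $(X_1\to\cdots\to X_{n-1})$, so $q$ sends the relevant weak equivalence to a weak equivalence. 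For condition (ii), take a cofibrant $X=(X_1\to\cdots\to X_n)$ with $e(X)=X_n$ of finite projective dimension; the unit $X\to (i\circ q)X$ is the identity in the first $n-1$ positions and $X_n\to 0$ in the last, hence an epimorphism whose kernel is the sequence $(0\to\cdots\to 0\to X_n)$, a trivial object, so the unit is a weak equivalence. Finally, the concluding identification holds because the trivial class of the transferred model on $\mathsf{Mor}_{n-1}(R)$ is $i^{-1}\big(\Rep_{\mathbb{A}_n}(\mathcal{P}^{<\infty}(R))\big)=\Rep_{\mathbb{A}_{n-1}}(\mathcal{P}^{<\infty}(R))$, which is precisely the trivial class of $\mathsf{Mor}_{n-1}(R)_{\mathsf{GP}}$, whence $\Ho(\mathsf{Mor}_{n-1}(R))_{\pi/\iota}$ coincides with $\Ho(\mathsf{Mor}_{n-1}(R))_{\mathsf{GP/GI}}$.

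The step requiring the most care is the verification of condition (i): one must check that applying $q$ to a cofibrant replacement in $\mathsf{Mor}_n(R)$ produces a cofibrant replacement in $\mathsf{Mor}_{n-1}(R)$. This rests on the observation (immediate once the definition of $\Phi(\GProj{R})$ is unwound for the quiver $\mathbb{A}_n$) that discarding the last term preserves both membership in $\Phi(\GProj{R})$ and the property of being degreewise of finite projective dimension; the remaining verifications are routine bookkeeping with the explicit functors of recollement~(\ref{eq:heights}), after which the conclusion is immediate from Theorem~\ref{thm:main_lift}.
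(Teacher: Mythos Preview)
Your proposal is correct and follows essentially the same approach as the paper: you verify Setup~\ref{Setup} via Theorems~\ref{thm:right_lifting} and~\ref{thm:left_lifting} with the same concrete checks (the functor $q$ deletes the last entry of a cofibration, projectives of $\mathsf{Mor}_{n-1}(R)$ map into trivial objects under $i$), and then establish conditions (i) and (ii) of Theorem~\ref{thm:main_lift} exactly as the paper does, including the identification of trivial classes for the final sentence. Your highlighted ``step requiring the most care'' is precisely the point the paper spells out in detail.
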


\subsection{Derived categories of Grothendieck categories}

We recall the following notion from \cite{Psa_homological_theory}.

\begin{definition}
\label{defnhomembed}
An exact functor $i\colon\G\to \G'$ between abelian categories is called a $k$-{\bf homological embedding}, where $k\geq 0$, if the map 
\[
\xymatrix{
i_{X,Y}^n\colon{\Ext}_{\G}^n(X,Y) \ar[r]^{ } & {\Ext}_{\G'}^n(i(X),i(Y)) }
\]
is invertible for all $X$, $Y$ in $\G$ and  $0\leq n\leq k$. The functor $i$ is called a {\bf homological embedding}, if $i$ is a $k$-homological embedding for all $k \geq 0$. 
\end{definition}

Let $\mathsf{R_{\mathsf{ab}}}(\A,\B,\C)$ be a recollement of Grothendieck categories with enough projective objects and consider the associated recollement of categories of complexes $\mathsf{R_{\mathsf{ab}}}(\Ch(\A),\Ch(\B),\Ch(\C)),$ as in Remark \ref{rem:induced_rec_cpx}. 
In what follows, we make use of the standard projective and injective model structures on chain complexes that were recalled before Remark \ref{rem:induced_rec_cpx}.

It is easy to see (cf.~the discussion in the introductory section  \ref{sec:intro}) that we have a Quillen adjoint triple 
\begin{equation}
\label{eq:Quillen_adj_tr_ch}
Q_{\mathrm{tr}}(\Ch(\B)_{\mathsf{proj/inj}}, \Ch(\C)_{\mathsf{proj/inj}}, \mathrm{id})
\end{equation}
as well as a Quillen adjoint triple
\begin{equation}
\label{eq:Quillen_adj_tr_ch2}
Q_{\mathrm{tr}}(\Ch(\A)_{\mathsf{proj/inj}}, \Ch(\B)_{\mathsf{proj/inj}}, \mathrm{id})
\end{equation}
where for both triples the morphisms $f,f',g,'g$ from Definition \ref{def:Quillen_adjoint_triple} are identity morphisms. In particular, it makes sense to ask when $i\colon \Ch(\A)\to \Ch(\B)$ is a derived embedding (as in Definition~\ref{defn:derembedding}).

\begin{proposition}\textnormal{(\cite[Theorem~6.9]{PsaroudVitoria})}
\label{prop:hom_embed}
The following are equivalent:
\begin{itemize}
\item[(i)] The functor $i\colon \Ch(\A)\to \Ch(\B)$ is a derived embedding.  
\item[(ii)] The functor $i\colon \A\to \B$ is a homological embedding.
\end{itemize}
\end{proposition}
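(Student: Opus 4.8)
The plan is to unwind both conditions into a concrete statement about Ext-groups in $\Ch(\A)$ and $\Ch(\B)$ and then use the characterization of derived embeddings via fully faithful derived functors. First recall that by Proposition~\ref{prop:lifting_of_induced} (or Definition~\ref{defn:derembedding}), the functor $i\colon\Ch(\A)\to\Ch(\B)$ being a derived embedding for the Quillen adjoint triple~(\ref{eq:Quillen_adj_tr_ch2}) is equivalent to $i\colon\Ch(\A)_{\mathsf{proj}}\to\Ch(\B)_{\mathsf{proj}}$ being a right derived embedding, i.e.\ to $\mathbf{R}i$ being fully faithful (note $i$ is already fully faithful, since it is fully faithful in the recollement $\mathsf{R_{ab}}(\A,\B,\C)$ and this is inherited degreewise). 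By Proposition~\ref{prop:fully_faithful_right_derived}, $\mathbf{R}i$ is fully faithful if and only if for all $X,Y\in\Ch(\A)$ and all $n\geq 1$ the natural map $\Ext^n_{\Ch(\A)}(QX,RY)\to\Ext^n_{\Ch(\B)}(QirRX,irRY)$ is an isomorphism; since $i$ is fully faithful this simplifies, and one reduces to comparing $\Ext^n_{\Ch(\A)}(X,Y)$ with $\Ext^n_{\Ch(\B)}(iX,iY)$ for all $X,Y\in\Ch(\A)$ and $n\geq 0$.

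Next I would translate the homological embedding condition on $i\colon\A\to\B$ into the chain-complex level. The key point is the classical identification of Yoneda Ext-groups in a category of complexes with hyper-Ext: for $X,Y\in\Ch(\A)$ one has $\Ext^n_{\Ch(\A)}(X,Y)\cong\Hom_{\Der(\A)}(X,Y[n])$, and similarly over $\B$, because the projective (and injective) model structures on $\Ch(\A)$, $\Ch(\B)$ have homotopy category the unbounded derived category. Under this identification the comparison map above becomes the map $\Hom_{\Der(\A)}(X,Y[n])\to\Hom_{\Der(\B)}(iX,iY[n])$ induced by the (exact, hence triangulated) functor $\Der(i)\colon\Der(\A)\to\Der(\B)$. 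So $\mathbf{R}i$ fully faithful on $\Ch(\A)_{\mathsf{proj}}$ is exactly the statement that $\Der(i)\colon\Der(\A)\to\Der(\B)$ is fully faithful. The equivalence with condition~(ii) is then precisely the content of \cite[Theorem~6.9]{PsaroudVitoria}, which says that $\Der(i)$ is fully faithful if and only if $i\colon\A\to\B$ is a homological embedding. (For the direction (ii)$\Rightarrow$(i) one may also argue directly: if $i$ is a homological embedding, take projective resolutions $P_\bullet\to X$, $P'_\bullet\to Y$ of stalk complexes; $i$ being exact sends these to resolutions over $\B$, and the $\Ext^n$-comparison follows by dimension shifting from the $n=0,1$ cases, which are the defining isomorphisms. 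The general case follows by the usual spectral sequence / filtration reduction from stalks to arbitrary complexes, together with the fact that $\Der(i)$ commutes with coproducts.)

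The main obstacle is the faithful bookkeeping in the reduction from arbitrary complexes to stalk complexes: one must know that the comparison of $\Ext^n$ for \emph{all} pairs of complexes follows from the comparison for pairs of stalks (equivalently, for objects of $\A$), which is where the homological embedding hypothesis is actually used, and this requires either invoking \cite[Theorem~6.9]{PsaroudVitoria} as a black box or carefully handling a hyper-Ext spectral sequence and the interaction of $\Der(i)$ with (co)products and truncations. Given that \cite[Theorem~6.9]{PsaroudVitoria} is available, the cleanest route is: (1) identify ``derived embedding at the complex level'' with ``$\mathbf{R}i$ fully faithful'' via Definition~\ref{defn:derembedding} and Proposition~\ref{prop:fully_faithful_right_derived}; (2) identify ``$\mathbf{R}i$ fully faithful'' with ``$\Der(i)$ fully faithful'' via the standard Ext/hom identification in derived categories; (3) quote \cite[Theorem~6.9]{PsaroudVitoria} to conclude the equivalence with $i$ being a homological embedding. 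The equivalence with the injective-side formulation (via $\mathbf{L}i$) is automatic from Proposition~\ref{prop:left_right_derived_embedding}, since $\Ch(\B)_{\mathsf{proj}}$ and $\Ch(\B)_{\mathsf{inj}}$ share weak equivalences and are Quillen equivalent via the identity.
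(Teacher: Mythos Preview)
Your overall strategy is the same as the paper's: both arguments ultimately reduce the statement to \cite[Theorem~6.9]{PsaroudVitoria}. However, the paper's entire proof consists of \emph{verifying the hypotheses} of that theorem, and this is precisely the step you omit. Concretely, \cite[Theorem~6.9]{PsaroudVitoria} requires that $\Der(\B)$ be TR5 and TR5$^{*}$, that $\Der(\A)$ be left-complete, and that the derived functor of $i$ preserve products and coproducts. The paper checks these: the first two follow because $\A,\B$ are Grothendieck with enough projectives (invoking \cite[Example~6.4]{PsaroudVitoria}), and the last follows from the existence of the adjoint triple of Proposition~\ref{prop:special_triple}. You invoke the cited theorem as a black box without confirming that it applies; filling this in is the actual content of the proof.

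A secondary point: your steps (1)--(2) are a detour. By definition (Definition~\ref{defn:derembedding}), ``$i\colon\Ch(\A)\to\Ch(\B)$ is a derived embedding'' already \emph{means} that $\mathbf{R}i\colon\Ho(\Ch(\A)_{\mathsf{proj}})\to\Ho(\Ch(\B)_{\mathsf{proj}})$ is fully faithful, and since these homotopy categories are $\Der(\A)$ and $\Der(\B)$ with $\mathbf{R}i$ the derived functor of the exact $i$, this is immediately the statement that $\Der(i)$ is fully faithful. There is no need to pass through the $\Ext$-characterization of Proposition~\ref{prop:fully_faithful_right_derived}, and your claimed identification $\Ext^n_{\Ch(\A)}(X,Y)\cong\Hom_{\Der(\A)}(X,Y[n])$ for \emph{arbitrary} $X,Y$ is not correct as stated (it requires suitable replacements, which is exactly what Proposition~\ref{prop:fully_faithful_right_derived} encodes).
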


\begin{proof}
We just need to check that the assumptions of \cite[Theorem~6.9]{PsaroudVitoria} are satisfied. Indeed, since we are working with Grothendieck categories with enough projectives, we have in particular that $\Der(\B)$ is TR5 and TR5$^{*}$ and that $\Der(\A)$ (as well as $\Der(\B)$) is left-complete, see for instance \cite[Example~6.4]{PsaroudVitoria}. Also the derived functor of $i$ preserves products and coproducts since there is an adjoint triple as in Theorem~\ref{prop:special_triple}. Hence \cite[Theorem~6.9]{PsaroudVitoria} applies.
\end{proof}

\begin{theorem}
\label{modelthmCPS}
Let $\mathsf{R_{\mathsf{ab}}}(\A,\B,\C)$ be a recollement of Grothendieck categories with enough projective objects. Then there exists a diagram of derived categories
\begin{equation}
\label{eq:der_dia}
  \xymatrix@C=4pc{
\Der(\A)   \ar@<0.0ex>[r] |-{\, \textbf{R}i\cong\textbf{L}i\,}   &  \Der(\B)  \ar@/^1.4pc/[l] ^-{\textbf{R}p}    \ar@/_1.4pc/[l] _-{\textbf{L}q}  \ar[r] |-{\, \textbf{R}e\cong\textbf{L}e \,}&
  \Der(\C)  \ar@/_1.4pc/[l]  _-{\textbf{L}l}  \ar@/^1.4pc/[l] ^-{\textbf{R}r}
  }
\end{equation}
which is a recollement if and only if the functor $i\colon \Ch(\A)\to \Ch(\B)$ is a derived embedding.
\end{theorem}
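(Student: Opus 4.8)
The plan is to reduce Theorem~\ref{modelthmCPS} to the general machinery already established, by verifying that the recollement $\mathsf{R_{\mathsf{ab}}}(\Ch(\A),\Ch(\B),\Ch(\C))$ of categories of complexes satisfies Setup~\ref{Setup} when $\Ch(\B)$ and $\Ch(\C)$ are equipped with their projective and injective model structures. First I would recall from the discussion preceding Remark~\ref{rem:induced_rec_cpx} that $\Ch(\B)_{\mathsf{proj}}$ and $\Ch(\B)_{\mathsf{inj}}$ are hereditary abelian model structures with common class of trivial objects $\Chac(\B)$, common homotopy category $\Der(\B)$, and are Quillen equivalent via the identity functor; likewise for $\Ch(\C)$. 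Since $e\colon\B\to\C$ is exact, it sends acyclic complexes to acyclic complexes, so $e(\W_{\Ch(\B)})\subseteq\W_{\Ch(\C)}$, giving the third bullet of Setup~\ref{Setup}. The remaining two bullets — existence of the right transfer of $\Ch(\B)_{\mathsf{proj}}$ and the left transfer of $\Ch(\B)_{\mathsf{inj}}$ along $i\colon\Ch(\A)\to\Ch(\B)$ — are exactly Propositions~\ref{prop:transfer_trivial_proj} and~\ref{prop:transfer_trivial_inj}, whose hypotheses hold: $\A,\B$ are Grothendieck with enough projectives (the latter from Remark~\ref{rem:comments_on_setup}(i) applied to $\mathsf{R_{\mathsf{ab}}}(\A,\B,\C)$, or directly from the hypothesis), $i$ is faithful and exact as part of a recollement, and $i$ admits the left adjoint $q$ and forms the adjoint triple $(q,i,p)$. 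Hence both transfers exist and coincide with $\Ch(\A)_{\mathsf{proj}}$, resp. $\Ch(\A)_{\mathsf{inj}}$; in particular $\Ho(\Ch(\A))_{\pi/\iota}=\Der(\A)$.

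With Setup~\ref{Setup} verified, Theorem~\ref{thm:main_lift} applies to $\mathsf{R_{\mathsf{ab}}}(\Ch(\A),\Ch(\B),\Ch(\C))$ and produces the diagram (\ref{eq:der_dia}) of triangulated categories, identifying $\Ho(\A)_{\pi/\iota}$ with $\Der(\A)$, $\Ho(\B)_{\mathsf{proj/inj}}$ with $\Der(\B)$, and $\Ho(\C)_{\mathsf{proj/inj}}$ with $\Der(\C)$, together with the derived functors $\mathbf{R}i\cong\mathbf{L}i$, $\mathbf{R}p$, $\mathbf{L}q$, $\mathbf{R}e\cong\mathbf{L}e$, $\mathbf{L}l$, $\mathbf{R}r$. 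Theorem~\ref{thm:main_lift} moreover tells us this diagram is a recollement if and only if conditions (i) and (ii) there hold. Condition (i) is precisely the statement that $i\colon\Ch(\A)\to\Ch(\B)$ is a derived embedding in the sense of Definition~\ref{defn:derembedding}, which by Proposition~\ref{prop:hom_embed} is equivalent to $i\colon\A\to\B$ being a homological embedding. So the only remaining point is to show that, in this chain-complex setting, condition (ii) is automatic — i.e. it is implied by condition (i), or better, holds unconditionally.

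The main obstacle, then, is disposing of condition (ii): for every cofibrant $X$ in $\Ch(\B)_{\mathsf{proj}}$ (a semi-projective complex) lying in $\ker\mathbf{R}e$, the unit $X\to(i\circ q)X$ must be a weak equivalence in $\Ch(\B)_{\mathsf{proj}}$. The key observation is that in a recollement of abelian categories there is, for every object $B$ of $\B$, a canonical exact sequence relating $B$ to $(l\circ e)(B)$, $(i\circ q)(B)$, and kernels/cokernels of the unit and counit; concretely $\ker(\varepsilon_B\colon le B\to B)$ and $\coker(B\to iqB)$ are controlled by $e$, and the counit $le\to\mathrm{id}$ and unit $\mathrm{id}\to iq$ fit into the standard four-term exact sequence $0\to le B\to B\to iq B$ being right-exact in the appropriate sense, with the ``defect'' measured by $\A$-data that $e$ kills. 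Applying $e$ to the unit $X\to iqX$ and using $e\circ i=0$ shows $e$ of the cokernel is isomorphic (via $e$ of the counit map, since $e\circ l\cong\mathrm{id}$) to $eX$; if $X\in\ker\mathbf{R}e$ then $eX$ is acyclic, hence the cokernel of $X\to iqX$ is a complex whose image under $e$ is acyclic — but since this cokernel is itself in the image of $i$ (being $iq$ of something modulo the image of $le$), exactness of $i$ plus faithfulness force it to be acyclic. Then the kernel of $X\to iqX$, namely $(l\circ e)X$, is $l$ applied to an acyclic complex, hence acyclic since $l$ is exact. Therefore the unit $X\to iqX$ is a quasi-isomorphism, which is exactly a weak equivalence in $\Ch(\B)_{\mathsf{proj}}$; this verifies (ii) unconditionally. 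I would organize the argument by first stating this exact-sequence fact for recollements of abelian categories (citing \cite{Psa_survey} or \cite{Psa_homological_theory}), then running it degreewise on complexes using exactness of $l,i,q,e$, and finally invoking Theorem~\ref{thm:main_lift} together with Proposition~\ref{prop:hom_embed} to conclude. The only subtlety to check carefully is that ``cofibrant in $\Ch(\B)_{\mathsf{proj}}$'' is not actually needed for this step — the argument works for arbitrary $X\in\ker\mathbf{R}e$ — which matches observation (d) in the proof of Theorem~\ref{thm:main_lift}.
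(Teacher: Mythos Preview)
Your verification of Setup~\ref{Setup} and the application of Theorem~\ref{thm:main_lift} are correct and match the paper exactly. The problem is your treatment of condition~(ii).

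Your direct argument for (ii) rests on two false claims. First, you assert that $l$ is exact; in a recollement of abelian categories $l$ is only a left adjoint, hence right exact, and in the module case $l=Re\otimes_{eRe}-$ is exact only when $Re$ is flat over $eRe$. So from $eX$ acyclic you cannot conclude that $l(eX)$ is acyclic. Second, you identify $\ker(\eta_X\colon X\to iqX)$ with $l e(X)$, but the standard exact sequence from \cite{Psa_homological_theory} is only $leX\xrightarrow{\epsilon_X} X\xrightarrow{\eta_X} iqX\to 0$, so $\ker\eta_X=\mathrm{Im}\,\epsilon_X$ is a \emph{quotient} of $leX$, not $leX$ itself. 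Your discussion of the cokernel of $\eta_X$ is also confused: that cokernel is zero, since $\eta_X$ is epi. Even granting $l(eX)$ acyclic, a quotient of an acyclic complex need not be acyclic, so the argument does not close.

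The paper does not attempt a direct verification of (ii). Instead it observes that, in view of the proof of Theorem~\ref{thm:main_lift}, the remaining content is the essential surjectivity of $j'\colon\Der(\A)\to\ker(\mathbf{R}e)$, and obtains this by invoking \cite[Theorem~6.9]{PsaroudVitoria}, the same external result already used in Proposition~\ref{prop:hom_embed}. That result uses the homological-embedding hypothesis, so condition~(ii) is obtained \emph{from} condition~(i), not unconditionally. If you want to keep a self-contained argument, you would need to supply a genuine proof that (i) implies (ii) in the chain-complex setting; the exact sequence you cite is a reasonable starting point, but you must work with $\mathrm{Im}\,\epsilon_X$ rather than $leX$, and you cannot use exactness of $l$.
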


\begin{proof}
From the above remarks on the existence of the Quillen adjoint triples (\ref{eq:Quillen_adj_tr_ch}, \ref{eq:Quillen_adj_tr_ch2}), together with Propositions \ref{prop:transfer_trivial_proj} and \ref{prop:transfer_trivial_inj} we deduce that the recollement $\mathsf{R_{\mathsf{ab}}}(\Ch(\A),\Ch(\B),\Ch(\C))$ satisfies Setup~\ref{Setup}, hence from Theorem \ref{thm:main_lift} there exists a diagram as in (\ref{eq:der_dia}), which is a recollement if and only if conditions (i) and (ii) of Theorem \ref{thm:main_lift} hold. Hence, the proof will be over once we argue that condition (ii) of Theorem \ref{thm:main_lift} holds in the current situation. 
From the proof of Theorem \ref{thm:main_lift}, this boils down to showing that the canonically induced functor $j'\colon \Der(\A)\rightarrow\mathrm{Ker}(\mathbf{R}e)$  is essentially surjective. But this follows from \cite[Theorem~6.9]{PsaroudVitoria} (the fact that we can apply this last result in our context was explained in the proof of Proposition~\ref{prop:hom_embed}).
\end{proof}

We consider a ring $R$ with an idempotent element $e$. Then we have the following  recollement of module categories$\colon$
\[
\xymatrix@C=0.4cm{
\Mod{R/ReR} \ar[rrr]^{i:=f_*} &&& \Mod{R} \ar[rrr]^{e \ } \ar
@/_1.5pc/[lll]_{q:=R/ReR\otimes_R-}  \ar
 @/^1.5pc/[lll]^{p:=\Hom_R(R/ReR,-)} &&& \Mod{eRe}
\ar @/_1.5pc/[lll]_{l:=Re\otimes_{eRe}-} \ar
 @/^1.5pc/[lll]^{r:=\Hom_{eRe}(eR,-)}
 } 
\] 
where $f_*\colon \Mod{R/ReR}\to \Mod{R}$ is the restriction functor induced by the canonical morphism $f\colon R\to R/ReR$. Assume that $f_*$ is a homological embedding, equivalently, the morphism $f$ is a homological ring epimorphism \cite{GeigleLenzing}, equivalently, the ideal $ReR$ is stratifying \cite{CPS_3}. In this case, by Cline-Parshall-Scott \cite{CPS_3} it is known that the above recollement of module categories can be derived, that is, it induces a recollement $\mathsf{R_{\mathsf{tr}}}(\Der(R/ReR), \Der(R), \Der(eRe))$ of derived categories where all six functors are the derived functors of the underlying functors. Such a recollement situation was studied in \cite{PsaroudVitoria} under the name of stratifying recollement, see also \cite{AKLY_4}.
Theorem \ref{modelthmCPS} implies this well-known result, stated in homotopical terms.

\begin{corollary}
\label{cor:CPS}
Let $R$ be a ring with an idempotent element $e$. The following statements are equivalent$\colon$
\begin{enumerate}
\item The functor $i\colon \Ch(R/ReR)\to \Ch(R)$ is a derived embedding. 
\item There is a recollement of derived categories:
\[
 \xymatrix@C=4pc{
\Der(R/ReR)  \ar@<0.0ex>[r] |-{\, \textbf{R}i\cong\textbf{L}i\,}   &  
 \Der(R)  \ar@/^1.4pc/[l] ^-{\textbf{R}p}    \ar@/_1.4pc/[l] _-{\textbf{L}q}  \ar[r] |-{\, \textbf{R}e\cong\textbf{L}e \,}&
\Der(eRe). \ar@/_1.4pc/[l]  _-{\textbf{L}l}  \ar@/^1.4pc/[l] ^-{\textbf{R}r} }
\]
\end{enumerate}
\end{corollary}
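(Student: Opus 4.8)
The plan is to obtain Corollary~\ref{cor:CPS} as a direct instance of Theorem~\ref{modelthmCPS}. First I would recall that for any ring $R$ with an idempotent element $e$, the triple of module categories $\Mod{R/ReR}$, $\Mod{R}$, $\Mod{eRe}$ fits into a recollement $\mathsf{R_{\mathsf{ab}}}(\Mod{R/ReR},\Mod{R},\Mod{eRe})$ with exactly the six functors $i=f_*$, $q=R/ReR\otimes_R-$, $p=\Hom_R(R/ReR,-)$, $e$, $l=Re\otimes_{eRe}-$, $r=\Hom_{eRe}(eR,-)$ displayed before the statement; this is classical (see for instance \cite[Example~3.10]{Psa_survey}). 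All three categories appearing are module categories, hence Grothendieck categories with enough projective objects, so the hypotheses of Theorem~\ref{modelthmCPS} are satisfied.

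Next I would simply invoke Theorem~\ref{modelthmCPS} for this recollement. It yields a diagram of derived categories of the displayed shape --- with the quotient functor $e$ and the inclusion-type functor $i$ both exact, so that $\textbf{R}i\cong\textbf{L}i$ and $\textbf{R}e\cong\textbf{L}e$ as in the notation of the statement --- and asserts that this diagram is a recollement of triangulated categories if and only if the functor $i\colon\Ch(R/ReR)\to\Ch(R)$ is a derived embedding in the sense of Definition~\ref{defn:derembedding}. This is precisely the equivalence of (1) and (2), so no further argument is needed; in particular, the nontrivial half of Theorem~\ref{modelthmCPS} (that condition (ii) of Theorem~\ref{thm:main_lift} holds automatically, via essential surjectivity of the comparison functor onto $\mathrm{Ker}(\textbf{R}e)$) has already been packaged into the statement being applied.

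The only point that requires (routine) attention is bookkeeping: one must check that the recollement of module categories attached to $(R,e)$ is literally an instance of the abstract recollement $\mathsf{R_{\mathsf{ab}}}(\A,\B,\C)$ entering Theorem~\ref{modelthmCPS}, i.e.\ that the functor denoted $i$ there coincides with the restriction functor $f_*$ and that the classes involved are the standard ones; this presents no genuine obstacle. If desired, one can additionally combine the statement with Proposition~\ref{prop:hom_embed}, which identifies condition (1) with $i\colon\Mod{R/ReR}\to\Mod{R}$ being a homological embedding, equivalently with $f\colon R\to R/ReR$ being a homological ring epimorphism (equivalently $ReR$ stratifying), thereby recovering the criterion of Cline--Parshall--Scott; but this reformulation is not needed for the proof of the corollary itself.
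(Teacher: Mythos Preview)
Your proposal is correct and matches the paper's approach: the corollary is stated immediately after Theorem~\ref{modelthmCPS} with the remark that the theorem ``implies this well-known result,'' so the intended proof is precisely to observe that module categories are Grothendieck with enough projectives and specialize Theorem~\ref{modelthmCPS} to the recollement $\mathsf{R_{\mathsf{ab}}}(\Mod{R/ReR},\Mod{R},\Mod{eRe})$ attached to $(R,e)$. Your additional remark connecting condition~(1) to the stratifying/homological-embedding criterion via Proposition~\ref{prop:hom_embed} is also in line with the paper's discussion preceding the corollary.
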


\begin{bibdiv}
\begin{biblist}[\normalsize]

\bib{AKLY_1}{article}{
    AUTHOR = {Angeleri H\"{u}gel, L.},
    author = {Koenig, S.},
    author = {Liu, Q.},
    author = {Yang, D.} 
     TITLE = {Jordan-{H}\"{o}lder theorems for derived module categories of
              piecewise hereditary algebras},
   JOURNAL = {J. Algebra},
  FJOURNAL = {Journal of Algebra},
    VOLUME = {352},
      YEAR = {2012},
     PAGES = {361--381},
      ISSN = {0021-8693},
   MRCLASS = {16E35 (18E30)},
  MRNUMBER = {2862193},
MRREVIEWER = {Grzegorz Bobi\'{n}ski},
       URL = {https://doi.org/10.1016/j.jalgebra.2011.09.041},
}

\bib{AKLY_2}{article}{
    AUTHOR = {Angeleri H\"{u}gel, L.},
    author = {Koenig, S.},
    author = {Liu, Q.},
    author = {Yang, D.} 
     TITLE = {On the uniqueness of stratifications of derived module
              categories},
   JOURNAL = {J. Algebra},
  FJOURNAL = {Journal of Algebra},
    VOLUME = {359},
      YEAR = {2012},
     PAGES = {120--137},
      ISSN = {0021-8693},
   MRCLASS = {16E35},
  MRNUMBER = {2914629},
MRREVIEWER = {Octavio Mendoza Hern\'{a}ndez},
       URL = {https://doi.org/10.1016/j.jalgebra.2012.02.022},
}

\bib{AKLY_3}{article}{
    AUTHOR = {Angeleri H\"{u}gel, L.},
    author = {Koenig, S.},
    author = {Liu, Q.},
    author = {Yang, D.}  
     TITLE = {Ladders and simplicity of derived module categories},
   JOURNAL = {J. Algebra},
  FJOURNAL = {Journal of Algebra},
    VOLUME = {472},
      YEAR = {2017},
     PAGES = {15--66},
      ISSN = {0021-8693},
   MRCLASS = {16E35 (13C60 18E30 19A99)},
  MRNUMBER = {3584869},
MRREVIEWER = {Octavio Mendoza Hern\'{a}ndez},
       URL = {https://doi.org/10.1016/j.jalgebra.2016.10.023},
}

\bib{AKLY_4}{article}{
    AUTHOR = {Angeleri H\"{u}gel, L.},
    author = {Koenig, S.},
    author = {Liu, Q.},
    author = {Yang, D.}           
     TITLE = {Recollements and stratifying ideals},
   JOURNAL = {J. Algebra},
  FJOURNAL = {Journal of Algebra},
    VOLUME = {484},
      YEAR = {2017},
     PAGES = {47--65},
      ISSN = {0021-8693},
   MRCLASS = {16E35 (18E30)},
  MRNUMBER = {3656712},
MRREVIEWER = {Yury Vladimirovich Volkov},
       URL = {https://doi.org/10.1016/j.jalgebra.2017.04.003},
}

\bib{BBD}{article}{
    AUTHOR = {Be\u{\i}linson, A. A.},
    author = {Bernstein, J.},
    author = {Deligne, P.},
     TITLE = {Faisceaux pervers},
 BOOKTITLE = {Analysis and topology on singular spaces, {I} ({L}uminy,
              1981)},
    SERIES = {Ast\'{e}risque},
    VOLUME = {100},
     PAGES = {5--171},
 PUBLISHER = {Soc. Math. France, Paris},
      YEAR = {1982},
   MRCLASS = {32C38},
  MRNUMBER = {751966},
MRREVIEWER = {Zoghman Mebkhout},
}

\bib{Bergh}{article}{
    AUTHOR = {Bergh, P.~A.},
    author = {Jorgensen, D.~A.},
     author = {Moore, W.~F.},
     TITLE = {Totally acyclic approximations},
   JOURNAL = {Appl. Categ. Structures},
  FJOURNAL = {Applied Categorical Structures. A Journal Devoted to
              Applications of Categorical Methods in Algebra, Analysis,
              Order, Topology and Computer Science},
    VOLUME = {29},
      YEAR = {2021},
    NUMBER = {4},
     PAGES = {729--745},
}

\bib{hand2}{book}{
 Author = {Borceux, F.},
 Title = {{Handbook of categorical algebra. 2: Categories and structures}},
 FJournal = {{Encyclopedia of Mathematics and Its Applications}},
 Journal = {{Encycl. Math. Appl.}},
 ISSN = {0953-4806},
 Volume = {51},
 ISBN = {978-0-521-06122-3},
 Pages = {xvii + 443},
 Year = {2008},
 Publisher = {Cambridge: Cambridge University Press},
}

\bib{Becker}{article}{
 Author = {Becker, H.},
 Title = {{Models for singularity categories}},
 FJournal = {{Advances in Mathematics}},
 Journal = {{Adv. Math.}},
 ISSN = {0001-8708},
 Volume = {254},
 Pages = {187--232},
 Year = {2014},
 Publisher = {Elsevier (Academic Press), San Diego, CA},
}

\bib{CFH}{article}{
 Author = {Christensen, L. W.},
  author = {Foxby, H.-B.},
    author = {Holm, H.},
  TITLE =	 {Derived Category Methods in Commutative Algebra},
  HOWPUBLISHED = {Book in preparation},
  YEAR= {},
  NOTE = {Available at \mbox{\texttt{http://www.math.ttu.edu/~lchriste/download/dcmca.pdf}}},
}

\bib{CPS}{article}{
    AUTHOR = {Cline, E.},
    author = {Parshall, B.}
    author = {Scott, L.}
     TITLE = {Finite-dimensional algebras and highest weight categories},
   JOURNAL = {J. Reine Angew. Math.},
  FJOURNAL = {Journal f\"{u}r die Reine und Angewandte Mathematik. [Crelle's
              Journal]},
    VOLUME = {391},
      YEAR = {1988},
     PAGES = {85--99},
      ISSN = {0075-4102},
   MRCLASS = {18E30 (16A46 17B10 20G05 32C38)},
  MRNUMBER = {961165},
MRREVIEWER = {Dieter Happel},
}

\bib{CPS_2}{article}{
    AUTHOR = {Cline, E.},
    author = {Parshall, B.}
    author = {Scott, L.}
     TITLE = {Algebraic stratification in representation categories},
   JOURNAL = {J. Algebra},
  FJOURNAL = {Journal of Algebra},
    VOLUME = {117},
      YEAR = {1988},
    NUMBER = {2},
     PAGES = {504--521},
      ISSN = {0021-8693},
   MRCLASS = {18E30 (16A46 17B10 17B56 20G05)},
  MRNUMBER = {957457},
MRREVIEWER = {Dieter Happel},
       URL = {https://doi.org/10.1016/0021-8693(88)90123-8},
}

\bib{CPS_3}{article}{
    AUTHOR = {Cline, E.},
    author = {Parshall, B.}
    author = {Scott, L.}
     TITLE = {Stratifying endomorphism algebras},
   JOURNAL = {Mem. Amer. Math. Soc.},
  FJOURNAL = {Memoirs of the American Mathematical Society},
    VOLUME = {124},
      YEAR = {1996},
    NUMBER = {591},
     PAGES = {viii+119},
      ISSN = {0065-9266},
   MRCLASS = {16G10 (20C30)},
  MRNUMBER = {1350891},
       URL = {https://doi.org/10.1090/memo/0591},
}

\bib{Dal_Estrada_Holm}{article}{
    AUTHOR = {Dalezios, G.}
    author = {Estrada, S.}
    author = {Holm, H.},
     TITLE = {Quillen equivalences for stable categories},
   JOURNAL = {J. Algebra},
  FJOURNAL = {Journal of Algebra},
    VOLUME = {501},
      YEAR = {2018},
     PAGES = {130--149},
      ISSN = {0021-8693},
   MRCLASS = {18E10 (13D02 16E05 18E30 18G55)},
  MRNUMBER = {MR3768129},
MRREVIEWER = {Martin Frankland},
       URL = {https://doi.org/10.1016/j.jalgebra.2017.12.033},
}

\bib{Ktheoryinvariants}{article}{
    AUTHOR = {Dugger, D.}
    author = {Shipley, B.},
     TITLE = {{$K$}-theory and derived equivalences},
   JOURNAL = {Duke Math. J.},
  FJOURNAL = {Duke Mathematical Journal},
    VOLUME = {124},
      YEAR = {2004},
    NUMBER = {3},
     PAGES = {587--617},
      ISSN = {0012-7094},
   MRCLASS = {19D99 (18E30 55U35)},
  MRNUMBER = {2085176},
MRREVIEWER = {Daniel C. Isaksen},
       URL = {https://doi.org/10.1215/S0012-7094-04-12435-2},
}

\bib{Esh}{article}{
    AUTHOR = {Eshraghi, H.},
    author = {Hafezi, R.},
     author = {Salarian, Sh.},
     TITLE = {Total acyclicity for complexes of representations of quivers},
   JOURNAL = {Comm. Algebra},
  FJOURNAL = {Communications in Algebra},
    VOLUME = {41},
      YEAR = {2013},
    NUMBER = {12},
     PAGES = {4425--4441},
}

\bib{ladders}{article}{
    AUTHOR = {Gao, N.} 
    author = {Koenig, S.} 
    author = {Psaroudakis, C.},
     TITLE = {Ladders of recollements of abelian categories},
   JOURNAL = {J. Algebra},
  FJOURNAL = {Journal of Algebra},
    VOLUME = {579},
      YEAR = {2021},
     PAGES = {256--302},
      ISSN = {0021-8693},
   MRCLASS = {16E10 (16E35 16E65 16G50 16S50 18E10 18G80)},
  MRNUMBER = {4241239},
}

\bib{lifting}{article}{
    AUTHOR = {Garner, R.}
     author = {K\c{e}dziorek, M.}
      author = {Riehl, E.},
     TITLE = {Lifting accessible model structures},
   JOURNAL = {J. Topol.},
  FJOURNAL = {Journal of Topology},
    VOLUME = {13},
      YEAR = {2020},
    NUMBER = {1},
     PAGES = {59--76},
      ISSN = {1753-8416},
   MRCLASS = {18C35 (18G55 55U35)},
  MRNUMBER = {3999672},
}

\bib{GeigleLenzing}{article}{
    AUTHOR = {Geigle, W.}
     author = {Lenzing, H.},
     TITLE = {Perpendicular categories with applications to representations and sheaves},
   JOURNAL = {J. Algebra},
  FJOURNAL = {Journal of Algebra},
    VOLUME = {144},
      YEAR = {1991},
    NUMBER = {2},
     PAGES = {273--343},
}

\bib{Gil2011}{article}{
    AUTHOR = {Gillespie, J.},
     TITLE = {Model structures on exact categories},
   JOURNAL = {J. Pure Appl. Algebra},
  FJOURNAL = {Journal of Pure and Applied Algebra},
    VOLUME = {215},
      YEAR = {2011},
    NUMBER = {12},
     PAGES = {2892--2902},
      ISSN = {0022-4049},
     CODEN = {JPAAA2},
   MRCLASS = {18E10 (18G35 55U15 55U35)},
  MRNUMBER = {MR2811572},
MRREVIEWER = {Timothy Porter},
}

\bib{Gilsurvey}{article}{
    AUTHOR = {Gillespie, J.},
     TITLE = {Hereditary abelian model categories},
   JOURNAL = {Bull. Lond. Math. Soc.},
  FJOURNAL = {Bulletin of the London Mathematical Society},
    VOLUME = {48},
      YEAR = {2016},
    NUMBER = {6},
     PAGES = {895--922},
}

\bib{Gil2020}{article}{
    AUTHOR = {Gillespie, J.},
     TITLE = {Canonical resolutions in hereditary abelian model categories},
   JOURNAL = {Pacific J. Math.},
  FJOURNAL = {Pacific Journal of Mathematics},
    VOLUME = {313},
      YEAR = {2021},
    NUMBER = {2},
     PAGES = {365--411},
}

\bib{GT}{book}{
    AUTHOR = {G\"{o}bel, R.},
    Author = {Trlifaj, J.},
     TITLE = {Approximations and endomorphism algebras of modules. {V}olume
              1},
    SERIES = {De Gruyter Expositions in Mathematics},
    VOLUME = {41},
   EDITION = {extended},
      NOTE = {Approximations},
 PUBLISHER = {Walter de Gruyter GmbH \& Co. KG, Berlin},
      YEAR = {2012},
     PAGES = {xxviii+458}
}

\bib{Happelbook}{book}{
    AUTHOR = {Happel, Dieter},
     TITLE = {Triangulated categories in the representation theory of
              finite-dimensional algebras},
    SERIES = {London Mathematical Society Lecture Note Series},
    VOLUME = {119},
 PUBLISHER = {Cambridge University Press, Cambridge},
      YEAR = {1988},
     PAGES = {x+208},
      ISBN = {0-521-33922-7},
   MRCLASS = {16A46 (16A48 16A62 16A64 18E30)},
  MRNUMBER = {MR935124},
MRREVIEWER = {Alfred G. Wiedemann},
}

\bib{holm}{article}{
    AUTHOR = {Holm, H.},
     TITLE = {Gorenstein homological dimensions},
   JOURNAL = {J. Pure Appl. Algebra},
  FJOURNAL = {Journal of Pure and Applied Algebra},
    VOLUME = {189},
      YEAR = {2004},
    NUMBER = {1-3},
     PAGES = {167--193},
      ISSN = {0022-4049},
   MRCLASS = {16E10 (16E05 16E30)},
  MRNUMBER = {2038564},
MRREVIEWER = {Zhaoyong Huang},
}

\bib{HJ}{article}{
    AUTHOR = {Holm, H.}
    author = {J{\o}rgensen, P.},
     TITLE = {Cotorsion pairs in categories of quiver representations},
   JOURNAL = {Kyoto J. Math},
  FJOURNAL = {},
    VOLUME = {59},
    number = {3},
      YEAR = {2019},
     PAGES = {575--606},
      ISSN = {},
     CODEN = {},
   MRCLASS = {},
  MRNUMBER = {},
       DOI = {},
       URL = {},
}

\bib{hovey2002}{article}{
    AUTHOR = {Hovey, M.},
     TITLE = {Cotorsion pairs, model category structures, and representation
              theory},
   JOURNAL = {Math. Z.},
  FJOURNAL = {Mathematische Zeitschrift},
    VOLUME = {241},
      YEAR = {2002},
    NUMBER = {3},
     PAGES = {553--592},
      ISSN = {0025-5874},
     CODEN = {MAZEAX},
   MRCLASS = {55U35 (18E30 18G55)},
  MRNUMBER = {MR1938704},
}

\bib{hoveybook}{book}{
    AUTHOR = {Hovey, M.},
     TITLE = {Model categories},
    SERIES = {Mathematical Surveys and Monographs},
    VOLUME = {63},
 PUBLISHER = {American Mathematical Society},
   ADDRESS = {Providence, RI},
      YEAR = {1999},
     PAGES = {xii+209},
      ISBN = {0-8218-1359-5},
   MRCLASS = {55U35 (18D15 18G30 18G55)},
  MRNUMBER = {MR1650134},
MRREVIEWER = {Teimuraz Pirashvili},
}

\bib{Joyal}{article}{
 Author = {Joyal, A.},
 Author ={Tierney, M.},
 Journal = {Contemp. Math.},
 Title = {Quasi-categories vs Segal spaces},
 BookTitle = {In Categories in algebra, geometry and mathematical physics, Vol. 431., Amer. Math. Soc., Providence, RI.},
 Pages = {277--326},
 Year = {2007},
}

\bib{Krause}{article}{
    AUTHOR = {Krause, H.},
     TITLE = {Highest weight categories and recollements},
   JOURNAL = {Ann. Inst. Fourier (Grenoble)},
  FJOURNAL = {Universit\'{e} de Grenoble. Annales de l'Institut Fourier},
    VOLUME = {67},
      YEAR = {2017},
    NUMBER = {6},
     PAGES = {2679--2701},
      ISSN = {0373-0956},
   MRCLASS = {16G10 (16D90 16E65 18E30)},
  MRNUMBER = {3742477},
MRREVIEWER = {Hongxing Chen},
       URL = {http://aif.cedram.org/item?id=AIF_2017__67_6_2679_0},
}

\bib{cellular}{article}{
    AUTHOR = {Makkai, M.}
     author = {Rosick\'{y}, J.},
     TITLE = {Cellular categories},
   JOURNAL = {J. Pure Appl. Algebra},
  FJOURNAL = {Journal of Pure and Applied Algebra},
    VOLUME = {218},
      YEAR = {2014},
    NUMBER = {9},
     PAGES = {1652--1664},
      ISSN = {0022-4049},
   MRCLASS = {18C35 (18D05)},
  MRNUMBER = {3188863},
MRREVIEWER = {Philippe Gaucher},
}

\bib{Nee}{book}{
  AUTHOR =	 {Neeman, A.},
  TITLE =	 {Triangulated categories},
  SERIES =	 {Annals of Mathematics Studies},
  VOLUME =	 {148},
  PUBLISHER =	 {Princeton University Press},
  ADDRESS =	 {Princeton, NJ},
  YEAR =	 {2001},
  PAGES =	 {viii+449},
  ISBN =	 {0-691-08685-0; 0-691-08686-9},
  MRCLASS =	 {18E30 (55-02 55N20 55U35)},
  MRNUMBER =	 {MR1812507},
  MRREVIEWER =	 {Stanis{\l}aw Betley},
}

\bib{OPS}{article}{
    AUTHOR = {Oppermann, S.},
    author = {Psaroudakis, C.},
     author ={Stai, T.},
     TITLE = {Change of rings and singularity categories},
   JOURNAL = {Adv. Math.},
  FJOURNAL = {Advances in Mathematics},
    VOLUME = {350},
      YEAR = {2019},
     PAGES = {190--241},
}

\bib{ParraVitoria}{article}{
    AUTHOR = {Parra, C.~E.},
    author = {Vit\'oria, J.},
     TITLE = {Properties of abelian categories via recollements},
   JOURNAL = {J. Pure and Appl. Algebra},
  FJOURNAL = {Journal of Pure and Applied Algebra},
    VOLUME = {223},
      YEAR = {2019},
     PAGES = {3941--3963},
      ISSN = {},
   MRCLASS = {},
  MRNUMBER = {},
MRREVIEWER = {},
       URL = {},
}

\bib{Psa_homological_theory}{article}{
    AUTHOR = {Psaroudakis, C.},
     TITLE = {Homological theory of recollements of abelian categories},
   JOURNAL = {J. Algebra},
  FJOURNAL = {Journal of Algebra},
    VOLUME = {398},
      YEAR = {2014},
     PAGES = {63--110},
      ISSN = {0021-8693},
   MRCLASS = {18E10 (16E10 18E30)},
  MRNUMBER = {3123754},
MRREVIEWER = {Fernando Muro},
       URL = {https://doi.org/10.1016/j.jalgebra.2013.09.020},
}

\bib{Psa_survey}{article}{
    AUTHOR = {Psaroudakis, C.},
     TITLE = {A representation-theoretic approach to recollements of abelian
              categories},
 BOOKTITLE = {Surveys in representation theory of algebras},
    SERIES = {Contemp. Math.},
    VOLUME = {716},
     PAGES = {67--154},
 PUBLISHER = {Amer. Math. Soc., [Providence], RI},
      YEAR = {2018},
   MRCLASS = {16E10 (16E65 16G50 16S50 18E30)},
  MRNUMBER = {3852400},
}

\bib{PsaroudVitoria}{article}{
    AUTHOR = {Psaroudakis, C.},
    author = {Vit\'oria, J.},
     TITLE = {Realisation functors in tilting theory},
   JOURNAL = {Math. Z.},
  FJOURNAL = {Mathematische Zeitschrift},
    VOLUME = {288},
      YEAR = {2018},
     PAGES = {965--1028},
}

\bib{Shulman}{article}{
    AUTHOR = {Shulman, M.},
     TITLE = {Comparing composites of left and right derived functors},
   JOURNAL = {New York J. Math.},
  FJOURNAL = {New York Journal of Mathematics},
    VOLUME = {17},
      YEAR = {2011},
     PAGES = {75--125},
   MRCLASS = {55U35 (18A40 18D05 18G55)},
  MRNUMBER = {2781909},
MRREVIEWER = {Kathryn P. Hess},
       URL = {http://nyjm.albany.edu:8000/j/2011/17_75.html},
}

\bib{stenstrom}{book}{
    AUTHOR = {Stenstr\"{o}m, B.},
     TITLE = {Rings of quotients},
      NOTE = {Die Grundlehren der Mathematischen Wissenschaften, Band 217,
              An introduction to methods of ring theory},
 PUBLISHER = {Springer-Verlag, New York-Heidelberg},
      YEAR = {1975},
     PAGES = {viii+309},
   MRCLASS = {16-02},
  MRNUMBER = {0389953},
MRREVIEWER = {G. Michler},
}

\bib{deconstructibility}{article}{
    AUTHOR = {{\v{S}}{\soft{t}}ov{\'{\i}}{\v{c}}ek, J.},
     TITLE = {Deconstructibility and the {H}ill lemma in {G}rothendieck
              categories},
   JOURNAL = {Forum Math.},
  FJOURNAL = {Forum Mathematicum},
    VOLUME = {25},
      YEAR = {2013},
    NUMBER = {1},
     PAGES = {193--219},
      ISSN = {0933-7741},
   MRCLASS = {18E15 (16D70 18G35)},
  MRNUMBER = {3010854},
MRREVIEWER = {Alex Martsinkovsky},
}

\bib{Stoviceksurvey}{article}{
    AUTHOR = {{\v{S}}{\soft{t}}ov{\'{\i}}{\v{c}}ek, J.},
     TITLE = {Exact model categories, approximation theory, and cohomology
              of quasi-coherent sheaves},
 BOOKTITLE = {Advances in representation theory of algebras},
    SERIES = {EMS Ser. Congr. Rep.},
     PAGES = {297--367},
 PUBLISHER = {Eur. Math. Soc., Z\"urich},
      YEAR = {2013},
   MRCLASS = {18E10 (18E30 18F20)},
  MRNUMBER = {MR3220541},
MRREVIEWER = {R. H. Street},
}

\bib{SP}{article}{
    AUTHOR = {{\v{S}}{\soft{t}}ov{\'{\i}}{\v{c}}ek, J.},
Author = {Positselski, L.},
Title = {Derived, coderived, and contraderived categories of locally presentable abelian categories},
TITLE = {Derived, coderived, and contraderived categories of locally
              presentable abelian categories},
   JOURNAL = {J. Pure Appl. Algebra},
  FJOURNAL = {Journal of Pure and Applied Algebra},
    VOLUME = {226},
      YEAR = {2022},
    NUMBER = {4},
     PAGES = {Paper No. 106883, 39},
}

\bib{Verdier}{article}{
    AUTHOR = {Verdier, J.-L.},
     TITLE = {Des cat\'{e}gories d\'{e}riv\'{e}es des cat\'{e}gories ab\'{e}liennes},
   JOURNAL = {Ast\'{e}risque},
  FJOURNAL = {Ast\'{e}risque},
    NUMBER = {239},
      YEAR = {1996},
     PAGES = {xii+253 pp. (1997)},
      ISSN = {0303-1179},
   MRCLASS = {18E30 (18-03 18E35)},
  MRNUMBER = {1453167},
MRREVIEWER = {Amnon Neeman},
}

\bib{yang-liu}{article}{
    AUTHOR = {Yang, G.}
    author = {Liu, Z.},
     TITLE = {Cotorsion pairs and model structures on {${\rm Ch}(R)$}},
   JOURNAL = {Proc. Edinb. Math. Soc. (2)},
  FJOURNAL = {Proceedings of the Edinburgh Mathematical Society. Series II},
    VOLUME = {54},
      YEAR = {2011},
    NUMBER = {3},
     PAGES = {783--797},
      ISSN = {0013-0915},
   MRCLASS = {16D90 (18G35 18G55)},
  MRNUMBER = {2837480},
}

\bib{XiongZhang}{article}{
    Author = {Xiong, B.~L.} 
    AUTHOR = {Zhang, P.},
     TITLE = {Gorenstein-projective modules over triangular matrix {A}rtin
              algebras},
   JOURNAL = {J. Algebra Appl.},
  FJOURNAL = {Journal of Algebra and its Applications},
    VOLUME = {11},
      YEAR = {2012},
    NUMBER = {4},
     PAGES = {1250066, 14},
      ISSN = {0219-4988},
   MRCLASS = {16E65 (16G10)},
  MRNUMBER = {2959415},
}

\bib{Zhang}{article}{
    AUTHOR = {Zhang, P.},
     TITLE = {Gorenstein-projective modules and symmetric recollements},
   JOURNAL = {J. Algebra},
  FJOURNAL = {Journal of Algebra},
    VOLUME = {388},
      YEAR = {2013},
     PAGES = {65--80},
      ISSN = {0021-8693},
   MRCLASS = {16D80},
  MRNUMBER = {3061678},
}

\bib{Jordan}{article}{
 Author = {Williamson, J.}, 
 Title = {Algebraic models of change of groups functors in (co)free rational equivariant spectra},
 FJournal = {Journal of Pure and Applied Algebra},
 Journal = {J. Pure Appl. Algebra},
 ISSN = {0022-4049},
 Volume = {226},
 Number = {11},
 Pages = {53},
 Note = {Id/No 107108},
 Year = {2022},
}

\end{biblist}
\end{bibdiv}

\end{document}